\documentclass[a4paper,oneside,reqno]{amsart}
\usepackage[margin=30mm]{geometry}
\usepackage{mathtools,amssymb}
\usepackage[foot]{amsaddr}
\usepackage[shortlabels]{enumitem}

\usepackage[
 bookmarksnumbered=true,
 colorlinks=true,
 allcolors=blue
]{hyperref}

\usepackage[
 backend=biber,
 style=alphabetic,
 sorting=nyvt
]{biblatex}
\usepackage{xurl}

\numberwithin{equation}{section}

\theoremstyle{plain}
\newtheorem{theorem}{Theorem}[section]
\newtheorem{Itheorem}{Theorem}
\newtheorem{Icorollary}[Itheorem]{Corollary}

\newtheorem{lemma}[theorem]{Lemma}
\newtheorem{proposition}[theorem]{Proposition}
\newtheorem{corollary}[theorem]{Corollary}

\theoremstyle{definition}
\newtheorem{example}[theorem]{Example}
\newtheorem{definition}[theorem]{Definition}
\newtheorem{remark}[theorem]{Remark}

\DeclareMathOperator{\Hom}{Hom}
\DeclareMathOperator{\Gal}{Gal}
\newcommand{\Abelianization}[1]{#1^{\mathrm{ab}}}
\newcommand{\AbsGalGrp}[1]{G_{#1}}
\newcommand{\resfield}[1]{\kappa({#1})}

\begin{document}

\title{Ab-faithfulness of absolute Galois groups in number theory}
\date{\today}

\author[N.~Yamaguchi]{Naganori~Yamaguchi}
\address{Tokyo Denki University, 5 Senju-Asahi-cho, Adachi-ku, Tokyo 120-8551, Japan}
\email{n.yamaguchi@mail.dendai.ac.jp}

\subjclass[2020]{Primary 12F10, 12J10; Secondary 20E18, 20J06, 11S20}
\keywords{ab-faithfulness; absolute Galois groups; discretely valued fields; maximal pro-$\ell$ Galois groups; higher local fields; Kummer theory}

\begin{abstract}
	We prove that the absolute Galois group of every discretely valued field of mixed characteristic is ab-faithful.
	We also prove ab-faithfulness for higher local fields of positive dimension and give further examples using finite split embedding problems.
	Finally, we give a formal Hilbert~90 criterion for center-freeness and slimness of finite-step solvable quotients.
	We apply this criterion to ab-faithful absolute Galois groups and obtain slimness for several classes of fields, including higher local fields of positive dimension.
\end{abstract}

\maketitle
\tableofcontents

\section*{Introduction}

Let $G$ be a profinite group.
We say that $G$ is \emph{ab-faithful} if, for every open subgroup $H$ of $G$ and every open normal subgroup $N$ of $H$, the conjugation action of $H/N$ on $\Abelianization{N}$ is faithful (see~\cite[Definition~1(ii)]{Mochizuki2009CommentsStronglyTorsionFree}).
Here $\Abelianization{N}$ denotes the topological abelianization of $N$.
The following groups are known to be ab-faithful \cite[Examples~5, 6(i), and~7, respectively]{Mochizuki2009CommentsStronglyTorsionFree}:
\begin{enumerate}[(a)]
	\item The absolute Galois groups of finite extensions of $\mathbb{Q}_p$.
	\item Free profinite groups of finite rank at least two and profinite surface groups of genus at least two, as well as their maximal pro-$\ell$ quotients for any prime $\ell$.
	\item The absolute Galois groups of function fields of normal proper varieties over an algebraically closed field of characteristic zero, and their maximal pro-$\ell$ quotients for any prime $\ell$.
\end{enumerate}
In \cite[Proposition~2.8]{yamaguchi2026centerfreenessfinitestepsolvablegroups}, the author also proved that the \'etale and tame fundamental groups of hyperbolic curves over separably closed fields are ab-faithful.

In this paper, we obtain further examples by two methods.
We first give a criterion for ab-faithfulness in terms of finite split embedding problems in Proposition~\ref{prop:elementary-split-criterion} and apply it in Example~\ref{Iexampleemb}.
Next, we prove ab-faithfulness for every discretely valued field of mixed characteristic in Section~\ref{sec:valued-fields}.
We apply this result to higher local fields in Section~\ref{sec:higher-local-fields}.
These results give the following theorem.

\begin{Itheorem}[{Proposition~\ref{prop:elementary-split-criterion} and Theorems~\ref{thm:mixed-characteristic} and~\ref{thm:main-hlf}}]
	\label{intro:main}
	Let $K$ be a field satisfying one of the following conditions:
	\begin{enumerate}[(a)]
		\item\label{item:intro-emb} For some prime $\ell$, every finite split embedding problem for $\AbsGalGrp{K}$ with elementary abelian $\ell$-kernel has a proper solution.
		\item\label{item:intro-mixed-characteristic} $K$ is a discretely valued field of mixed characteristic.
		\item\label{item:intro-hlf} $K$ is a higher local field of positive dimension.
	\end{enumerate}
	Then $\AbsGalGrp{K}$ is ab-faithful.
\end{Itheorem}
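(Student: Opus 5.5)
The theorem has three cases, and I would prove each separately. In every case the reduction is the same. Take an open subgroup $H\subseteq \AbsGalGrp{K}$, so $H=\AbsGalGrp{L}$ for a finite separable extension $L/K$. Take an open normal subgroup $N\subseteq H$, so $N=\AbsGalGrp{M}$ for a finite Galois extension $M/L$ with group $Q=H/N$. Ab-faithfulness then says: for every $1\neq\sigma\in Q$ there is a finite abelian extension of $M$, Galois over $L$, on whose group $\sigma$ acts nontrivially by conjugation. Equivalently, $\sigma$ acts nontrivially on $\Abelianization{N}$. The classes in (a)–(c) are stable under finite extensions: finite split embedding problems for $\AbsGalGrp{L}$ can be induced from $\AbsGalGrp{K}$ (Shapiro/induced modules), a finite extension of a mixed-characteristic discretely valued field is again one, and a finite extension of a higher local field is again one. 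So it is enough to treat $H=\AbsGalGrp{K}$. Moreover, restricting to $\langle\sigma\rangle$ and replacing $K$ by the fixed field, we may assume $Q$ is cyclic of prime order $p$ where convenient.

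\emph{Case (a).} Let $V=\mathbb{F}_\ell[Q]$ be the regular representation and consider the split embedding problem $V\rtimes Q\to Q$. A proper solution gives a Galois extension $F/K$ containing $M$ with $\Gal(F/M)\cong V$, and $Q$ acts on it by conjugation through the regular representation, which is faithful. Since $V$ is an abelian quotient of $N$, this gives a nontrivial action of $\sigma$ on $\Abelianization{N}$. This is presumably the content of Proposition~\ref{prop:elementary-split-criterion}.

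\emph{Case (b).} Let $p$ be the residue characteristic, and use Kummer theory after adjoining $\mu_{p^n}$. Let $K'=M(\mu_p)$. If the $Q$-action on $\AbsGalGrp{M}^{\mathrm{ab}}/p$ were trivial for $\sigma$, then, after passing to $K'$ and using the $\mathbb{Z}/p(1)$-twisted Kummer pairing, $\sigma$ would act on $K'^\times/K'^{\times p}$ by a character of the twisted form $\chi_{\mathrm{cyc}}$. The point is that in mixed characteristic $K'^\times/K'^{\times p}$ is enormous. By the Artin–Hasse/Shafarevich structure of principal units, the filtration $U^{(i)}/U^{(i+1)}$ is isomorphic to the residue field (or its quotient by $p$-th powers) for $i<ep/(p-1)$. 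One chooses an element $u\in U^{(i)}$ with $i$ prime to $p$ and a suitable residue, such that $\sigma u/u^{\chi(\sigma)}$ is not a $p$-th power. To do this, compare the valuations and leading coefficients of $\sigma(u)-u$ for a uniformizer-based element $u=1+c\pi_{K'}^i$. Nontriviality of $\sigma$ forces either a nontrivial action on the residue field or a nontrivial ramification break, and in either case one finds a level $i$ where the graded action differs from multiplication by $\chi(\sigma)$. Then Kummer theory produces a $\mathbb{Z}/p$-extension of $K'$ on which $\sigma$ acts nontrivially. To descend from $K'$ to $M$, use that $[K':M]$ divides $p-1$, so it is prime to $p$, and take isotypic components to get a nontrivial action on $\AbsGalGrp{M}^{\mathrm{ab}}\otimes\mathbb{Z}_p$. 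The main obstacle is this case. One must handle wildly ramified $\sigma$ acting trivially on the residue field, and imperfect residue fields, with no completeness assumed. I would first pass to the completion, which has the same decomposition group and abelianization map on the relevant pieces, and then do the ramification-break computation above.

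\emph{Case (c).} An $n$-dimensional local field with $n\ge1$ is a complete discretely valued field whose residue field is an $(n-1)$-dimensional local field. If it has mixed characteristic, case (b) applies directly. Otherwise the field is $F((t))$, or a field of characteristic $p>0$. For these I would reduce to case (a) with $\ell\neq p$: the pro-$\ell$ part of the Galois group is well understood, as the tame Galois group is a semidirect product of $\hat{\mathbb{Z}}'(1)$-type factors. Alternatively, when the base field is an equicharacteristic-zero field $F((t))$, where the residue field $F$ has positive dimension, I would apply case (b)'s valuation argument with residue characteristic $0$ replaced by tame Kummer theory along $t$, combined with induction on $n$ for the residue action. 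For fields of characteristic $p$, Artin–Schreier–Witt theory gives free pro-$p$ quotients of infinite rank, so split embedding problems with $\mathbb{F}_p$-kernel are solvable, and (a) applies with $\ell=p$.
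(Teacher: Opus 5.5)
Case~(a) is essentially the paper's argument. Case~(b), however, has a genuine gap, and it sits exactly where you locate the main obstacle.

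\textbf{The local detection step.} You propose to find a level $i$ at which the graded action of $\sigma$ on $U^{(i)}/U^{(i+1)}$ differs from multiplication by $\chi(\sigma)$. This cannot work under wild or fierce ramification. Take $\mu_p\subset K$ and $M/K$ a ramified cyclic extension of degree $p$. Then:
\begin{itemize}
\item $\chi(\sigma)=1$;
\item $\sigma$ is trivial on the residue field, since the residue extension is trivial or purely inseparable;
\item $v(\sigma x-x)>v(x)$ for all $x\ne0$.
\end{itemize}
So $\sigma$ acts trivially on \emph{every} graded piece, and no such level exists. The action on $M^\times/M^{\times p}$ is unipotent, and what must be shown is that some $\sigma(u)/u$ is not a $p$-th power. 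Your leading-term computation puts $\sigma(u)/u$ at level $i+t$. You would then still have to rule out that it is a $p$-th power modulo deeper terms. This is a real issue when $p\mid i+t$, when the residue is a $p$-th power in an imperfect residue field, or when $i+t$ nears $ep/(p-1)$. In the fiercely ramified case ($e=1$) there is no uniformizer-based break and no Teichm\"uller lift at all. None of this is carried out.

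The paper avoids ramification theory entirely:
\begin{itemize}
\item If $\mu_p\subset K$, the norm-one principal units of $M$ form an infinite group (elements $(1+ta)/(1+t\sigma(a))$). A trivial action, via Hilbert~90 and the Kummer criterion, forces this group into $\mu_{p^\infty}(M)$ times its own $p$-th powers. A minimal-$v(b-1)$ argument then kills its $p^s$-th powers, so the group is finite, a contradiction.
\item If $\mu_p\not\subset K$, so $p$ is odd, the paper passes to the completion and takes the $\chi$-part of $\log U_E$. This is a torsion-free, $p$-adically separated $\mathbb{Z}_p$-module on which $\chi(\tau)^{-1}\tau$ is nontrivial of finite order, and such an automorphism cannot be trivial modulo $p$.
\end{itemize}

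\textbf{The non-complete case.} Passing from $K$ to its completion is also unjustified, for two reasons.
\begin{itemize}
\item If $v$ splits in $M/K$, every completion of $M$ equals $\widehat K$. Then $\sigma$ lies in no decomposition group and is invisible locally.
\item Even when $\sigma$ fixes the place, local nontriviality pulls back only if $H^1(G_M,\mathbb{Z}/p)\to\prod_w H^1(G_{\widehat M_w},\mathbb{Z}/p)$ is surjective.
\end{itemize}
The paper gets this surjectivity from the exponent-$p$ Grunwald--Wang theorem. In the split case it uses a class nonzero at exactly one place of the $\sigma$-orbit. Each local $H^1$ is nonzero by finiteness of $\mu_{p^\infty}$.

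\textbf{Case~(c).} Your outline matches the paper, but three justifications fall short.
\begin{itemize}
\item The infinite rank of the free pro-$p$ group $G_K(p)$ only handles embedding problems whose quotient is a $p$-group. Condition~(a) needs arbitrary finite quotients; this is the Bary-Soroker--T\^an theorem the paper cites.
\item Your $\ell\ne p$ route fails. $H^1(G_K,\mathbb{Z}/\ell)$ is finite for higher local fields, so $(\mathbb{Z}/\ell)^r\to1$ has no proper solution for large $r$.
\item In equal characteristic zero, the totally ramified step needs $\mu_{\ell^\infty}(\lambda)$ finite for all finite extensions $\lambda$ of the residue field; otherwise $\mathbb{C}((t))$ is a counterexample. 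This finiteness must be proved along the residue tower, and you never mention it.
\end{itemize}
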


A profinite group is \emph{slim} if the centralizer of every open subgroup is trivial.
Ab-faithfulness implies slimness (see~\cite[Remark~1.2]{Mochizuki2009CommentsStronglyTorsionFree}).
The converse fails even for absolute Galois groups; see Lemma~\ref{ex:slim-perfect-kernel}.
For Henselian discretely valued fields of positive characteristic, for discretely valued fields of mixed characteristic, and for higher local fields of positive dimension, Minamide--Tsujimura proved slimness in \cite[Theorems~2.8(i) and~2.10, and Corollary~2.11]{MR4434497}.
Our results prove ab-faithfulness for these fields.

\medskip

Next, we consider finite-step solvable quotients.
For a profinite group $G$, put
\begin{equation*}
	G^{[0]}\coloneq G,\qquad G^{[r+1]}\coloneq [G^{[r]},G^{[r]}],\qquad G^m\coloneq G/G^{[m]},
\end{equation*}
where all commutator subgroups are closed.
In Section~\ref{sec:split-embedding-problems-slim}, we prove that $G^m$ is slim for every $m\ge2$ if every finite split embedding problem for $G$ with abelian kernel has a proper solution.
In particular, this applies to the absolute Galois groups of Hilbertian fields.

In \cite[Proposition~2.5]{yamaguchi2026centerfreenessfinitestepsolvablegroups}, the author proved that $G^m$ is torsion-free and center-free for every $m\geq2$ if $G$ is ab-torsion-free and ab-faithful.
Here \emph{ab-torsion-free} means that the abelianization of every open subgroup is torsion-free.
For a prime $\ell$, an \emph{$\ell$-orientation} is a continuous homomorphism $\theta:G\to\mathbb{Z}_\ell^\times$.
Write $T$ for $\mathbb{Z}_\ell$ with the action $g\cdot t=\theta(g)t$.
The pair $(G,\theta)$ satisfies \emph{formal Hilbert~90} if reduction induces a surjection
\begin{equation*}
	H^1(H,T/\ell^nT)\to H^1(H,T/\ell T)
\end{equation*}
for every open subgroup $H$ of $G$ and every $n\ge1$ (see~\cite[Section~2.2]{MR4880790}).
Every ab-torsion-free group satisfies formal Hilbert~90 for the trivial orientation at every prime.
In Section~\ref{sec:solvable-centers}, we prove the assertion on center-freeness under the formal Hilbert~90 assumption in place of ab-torsion-freeness and give sufficient conditions for slimness.

\begin{Itheorem}[{Corollary~\ref{cor:cf-group-criterion}}]
	\label{intro:solvable-quotients}
	Let $G$ be an ab-faithful profinite group such that, for every prime $\ell$, there exists an $\ell$-orientation of $G$ satisfying formal Hilbert~90.
	Then $G^m$ is center-free for every $m\ge2$.
	If, for every prime $\ell$, such an orientation can be chosen with infinite image, then $G^m$ is slim for every $m\ge2$.
\end{Itheorem}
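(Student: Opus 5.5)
We argue by induction on $m\ge2$, proving the center-freeness and the slimness assertions together. In each case we first push a hypothetical central (resp.\ centralizing) element $z$ of $G^m$ into the last abelian layer $A\coloneq G^{[m-1]}/G^{[m]}=\Abelianization{(G^{[m-1]})}$ by ab-faithfulness. We then use formal Hilbert~90 to show that no nontrivial element of $A$ can be central (resp.\ centralize an open subgroup).

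\emph{Step 1 (reduction to the last layer).} Let $z\in G^m$ be central, with lift $\tilde z\in G$. Let $N$ be an open normal subgroup of $G$ containing $G^{[m-1]}$. Then $N^{[1]}\supseteq G^{[m]}$, so $\Abelianization{N}$ is a quotient of $N/G^{[m]}\subseteq G^m$. Since $z$ is central in $G^m$, conjugation by $\tilde z$ acts trivially on $\Abelianization{N}$. Ab-faithfulness, applied with $H=G$, then forces $\tilde z\in N$. Intersecting over all such $N$ gives $\tilde z\in G^{[m-1]}$, so $z\in A$. For slimness, let $z$ centralize an open subgroup $U/G^{[m]}$. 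Apply the same argument with $H=U\langle\tilde z\rangle$ and with $N$ ranging over the open normal subgroups of $H$ containing $U^{[m-1]}$. This puts $z$ into the last layer of $U^m$; after replacing $G$ by $U$, we may assume $z\in A$ and that $z$ is central in an open subgroup.

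\emph{Step 2 (detecting $z$ by a Kummer-type character).} Suppose $0\ne z\in A$. We have $G^{[m]}=\bigcap N^{[1]}$ over the open normal $N\supseteq G^{[m-1]}$. Hence there are such an $N$, a prime $\ell$ and a character $\chi\colon N\to\mathbb{Z}/\ell$ that vanishes on $N^{[1]}$ and satisfies $\chi(\tilde z)\ne0$. Take the $\ell$-orientation $\theta$ supplied by the hypothesis. Formal Hilbert~90 for the open subgroup $N$ lets us lift $\chi$, viewed in $H^1(N,T/\ell T)$ after shrinking $N$ so that $\theta$ is trivial mod $\ell$ on it, to classes $\chi_n\in H^1(N,T/\ell^nT)$ for all $n$.

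\emph{Step 3 (producing noncommutation).} The plan is to use the twisted action through $\theta$. On the quotient of $N$ cut out by $\chi_n$, an element $g\in G$ with $\theta(g)\not\equiv1\pmod{\ell^n}$ acts on the value of $\chi_n(\tilde z)$ by multiplication by $\theta(g)$. This should give $g\tilde zg^{-1}\ne\tilde z$ modulo $G^{[m]}$, contradicting centrality. In the slimness case the same $g$ must be found inside the given open subgroup. This is exactly where infinite image of $\theta$ is used: $\theta$ then stays nontrivial modulo $\ell^n$ on every open subgroup for suitable $n$. When $\theta$ is trivial on the relevant subgroup, we instead use the untwisted argument: formal Hilbert~90 for the trivial orientation gives the same lifting property as ab-torsion-freeness. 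In that case we follow the proof of \cite[Proposition~2.5]{yamaguchi2026centerfreenessfinitestepsolvablegroups}, conjugating by an element acting nontrivially on a lift of $\chi$ to $\mathbb{Z}/\ell^n$ and using ab-faithfulness once more on an intermediate open subgroup.

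\emph{Main obstacle.} I expect Step~3 to be the hard part. One must make sure that the lifted classes $\chi_n$ are compatible with conjugation by $G$, so that the conjugation action on $A$ is really governed by $\theta$. One must also make sure that $\tilde z$ is not killed when passing from $\chi$ to $\chi_n$. I would first try to carry this out inside the finite quotient $G/\ker(\chi_n)\cap\ker(\theta\bmod\ell^n)$, checking compatibility there by an explicit cocycle computation.
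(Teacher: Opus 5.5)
Your Step~1 for center-freeness is correct and is exactly Lemma~\ref{lem:cf-centralizer-criterion} with $H=G$. Your twisted argument for slimness, with $g$ taken in $N\cap U$, is Lemma~\ref{lem:cf-integral-invariants}\ref{item:cf-integral-open}. There are, however, two genuine gaps.

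The first gap is the slimness reduction in Step~1. If $z$ centralizes $U/G^{[m]}$, you only know $[\tilde z,u]\in G^{[m]}$ for $u\in U$. An open normal $N\supseteq U^{[m-1]}$ only satisfies $N^{[1]}\supseteq U^{[m]}$, and in general $G^{[m]}\not\subseteq N^{[1]}$. So $\tilde z$ need not act trivially on $\Abelianization{N}$, and ``replacing $G$ by $U$'' conflates $U/G^{[m]}$ with $U^m=U/U^{[m]}$. The ab-faithfulness reduction works only for open subgroups containing $D=G^{[m-1]}$, and reaching such a subgroup requires the Hilbert~90 input first. The paper first proves $(D/R)^H=0$ for every open $H$, where $R=G^{[m]}$; this is where infinite image is used. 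For $H$ open normal, $\operatorname{C}_{G/R}(H/R)$ and $D/R$ are both normal, so every commutator of their elements lies in $(D/R)^H=0$. Hence $\operatorname{C}_{G/R}(H/R)=\operatorname{C}_{G/R}(DH/R)$. Only then does Lemma~\ref{lem:cf-centralizer-criterion}, applied to $DH$, place the centralizer inside $D/R$. So in the slim case the order of your steps must be reversed.

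The second gap concerns center-freeness in Step~3. The lifted classes $\chi_n\in H^1(N,T/\ell^nT)$ need not be $G$-invariant. The relation $\chi_n(gag^{-1})=\theta(g)\chi_n(a)$ for $a\in D$ follows from the cocycle identity and $\theta(G^{[1]})=1$, so your anticipated obstacle is not one. But the relation holds only for $g\in N$. If $z$ is central, every $g\in N$ fixes it, so the relation merely forces $\theta|_N=1$. Thus the twisted argument never produces the contradiction, and center-freeness rests entirely on the untwisted case, which you only sketch by citation. Conjugation cannot decide that case, since $z$ is central. The fact that $\tilde z\in G^{[1]}$ must be used globally; for $m=1$ the quotient $\Abelianization{G}$ is abelian.

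The paper does this with the transfer. With $\theta|_N=1$, compatible lifts give a homomorphism $c\colon N\to\mathbb{Z}_\ell$ with $c(\tilde z)\ne0$. Since $\tilde z\in G^{[1]}$, we have $\operatorname{Ver}_{G,N}(\tilde z)=0$. On the other hand, the transfer formula and centrality give $c(\operatorname{Ver}_{G,N}(\tilde z))=[G:N]\,c(\tilde z)\ne0$ in the torsion-free group $\mathbb{Z}_\ell$.

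A smaller point: a character to $\mathbb{Z}/\ell$ need not detect $z$, since its image in $\Abelianization{N}$ may be divisible by $\ell$. Start from $T/\ell^sT$ and lift using~\eqref{eq:cf-cohomology-lifts}.
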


The following corollary applies Theorem~\ref{intro:solvable-quotients} to absolute Galois groups.

\begin{Icorollary}[{Corollary~\ref{thm:cf-absolute-center}}]
	\label{intro:solvable-galois}
	Let $K$ be a field whose absolute Galois group $\AbsGalGrp{K}$ is ab-faithful.
	Then $\AbsGalGrp{K}^m$ is center-free for every $m\geq2$.
	Assume moreover that the following two conditions hold:
	\begin{enumerate}[(i)]
		\item For every prime $\ell\ne\operatorname{char}K$, the extension $K(\mu_{\ell^\infty})/K$ is infinite.
		\item If $\operatorname{char}K=p>0$, then $K$ admits a cyclic Galois extension of degree $p$.
	\end{enumerate}
	Then $\AbsGalGrp{K}^m$ is slim for every $m\geq2$.
\end{Icorollary}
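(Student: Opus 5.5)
The plan is to deduce the corollary from Theorem~\ref{intro:solvable-quotients}. That theorem takes ab-faithfulness of $\AbsGalGrp{K}$ as given. So the whole task is to produce, for each prime $\ell$, an $\ell$-orientation $\theta$ of $G=\AbsGalGrp{K}$ such that $(G,\theta)$ satisfies formal Hilbert~90, and, under (i) and (ii), one with infinite image. I split into two cases.

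\textbf{Case $\ell\ne\operatorname{char}K$.} Take $\theta$ to be the $\ell$-adic cyclotomic character $\chi_\ell:G\to\mathbb{Z}_\ell^\times$, so $T/\ell^nT\cong\mu_{\ell^n}$ as $G$-modules. For an open subgroup $H=\AbsGalGrp{L}$ with $L/K$ finite separable, Kummer theory (i.e.\ Hilbert~90 applied to $1\to\mu_{\ell^n}\to\overline{K}^\times\xrightarrow{\ell^n}\overline{K}^\times\to1$) gives $H^1(H,\mu_{\ell^n})\cong L^\times/L^{\times\ell^n}$. Under these isomorphisms the reduction map $\mu_{\ell^n}\to\mu_\ell$, $\zeta\mapsto\zeta^{\ell^{n-1}}$, induces the natural projection $L^\times/L^{\times\ell^n}\to L^\times/L^{\times\ell}$. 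This projection is surjective, which gives formal Hilbert~90. The image of $\chi_\ell$ is $\Gal(K(\mu_{\ell^\infty})/K)$, which is infinite exactly under (i).

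\textbf{Case $\ell=p=\operatorname{char}K>0$.} Take the trivial orientation, so $T/\ell^nT=\mathbb{Z}/p^n$ with trivial action. Artin--Schreier--Witt theory gives $H^1(H,\mathbb{Z}/p^n)\cong W_n(L)/\wp(W_n(L))$, where $\wp=F-1$. Under these isomorphisms the reduction $\mathbb{Z}/p^n\to\mathbb{Z}/p$ corresponds to the map induced by the first-component projection $W_n(L)\to W_1(L)=L$. Since $W_n(L)\to L$ is surjective, formal Hilbert~90 holds. Equivalently, $\operatorname{cd}_pG\le1$ makes $H^2(H,\mathbb{Z}/p^{n-1})=0$, and surjectivity follows from the long exact sequence of $0\to\mathbb{Z}/p^{n-1}\to\mathbb{Z}/p^n\to\mathbb{Z}/p\to0$. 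The trivial orientation has trivial image, so I must instead take a nontrivial orientation with infinite image and check formal Hilbert~90 for it. By (ii) there is a $\mathbb{Z}/p$-extension of $K$. A cyclic $\mathbb{Z}/p$-extension embeds into a $\mathbb{Z}_p$-extension, because $\operatorname{cd}_pG\le1$ lets the embedding problems along $\mathbb{Z}/p^{n+1}\to\mathbb{Z}/p^n$ be solved successively. This yields a surjection $\psi:G\to\mathbb{Z}_p$. Set $\theta=\exp$-type character, e.g.\ $\theta(g)=(1+p)^{\psi(g)}$ for $p$ odd, or $5^{\psi(g)}$ for $p=2$; it has infinite image.

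For this twisted $T$, formal Hilbert~90 again follows from $\operatorname{cd}_pH\le1$. The sequence $0\to T/p^{n-1}T\to T/p^nT\to T/pT\to0$ is exact as $H$-modules, and $H^2(H,T/p^{n-1}T)=0$ because that module is $p$-primary torsion and $\operatorname{cd}_pH\le1$. Hence $H^1(H,T/p^nT)\to H^1(H,T/pT)$ is surjective.

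With these orientations in hand, Theorem~\ref{intro:solvable-quotients} gives center-freeness of $\AbsGalGrp{K}^m$ for all $m\ge2$ in general, and slimness under (i) and (ii).

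The main obstacle I expect is the characteristic-$p$ step: verifying that $\operatorname{cd}_p\AbsGalGrp{K}\le1$ (a standard fact via Artin--Schreier) applies uniformly to the twisted modules, and that (ii) really yields a $\mathbb{Z}_p$-quotient. The cyclotomic case is routine Kummer theory.
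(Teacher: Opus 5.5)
Your proposal is correct and follows the paper's proof essentially step for step. You use the cyclotomic orientation with Kummer theory for $\ell\ne\operatorname{char}K$; for $\ell=p=\operatorname{char}K$ you get formal Hilbert~90 for any twist from $\operatorname{cd}_p\le1$, and build an infinite-image orientation from a $\mathbb{Z}_p$-quotient lifting the degree-$p$ extension. The only differences are cosmetic: the paper uses $(1+p^2)^{\rho(g)}$ uniformly in $p$, and it obtains $\rho$ from compatible lifts of homomorphisms rather than from successive cyclic embedding problems, which amounts to the same thing.
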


Thus $\AbsGalGrp{K}^m$ is center-free for every field $K$ in Theorem~\ref{intro:main} and every $m\ge2$.
For mixed-characteristic local fields, center-freeness of $\AbsGalGrp{K}^m$ is proved in \cite[Proposition~A.1]{MR5005799}, where the remark following that proposition attributes the metabelian case to S.~Ladkani.

The ab-faithfulness assumption in Corollary~\ref{intro:solvable-galois} cannot be replaced by slimness: Lemma~\ref{ex:slim-perfect-kernel} gives a slim absolute Galois group whose finite-step solvable quotients are all isomorphic to $\widehat{\mathbb{Z}}$.

Combining the result from finite split embedding problems with the applications of Corollary~\ref{intro:solvable-galois}, we obtain the following corollary.

\begin{Icorollary}[{Proposition~\ref{prop:abelian-split-solvable-slim} and Example~\ref{ex:cf-field-slim}}]
	\label{intro:solvable-galois-2}
	Let $K$ be a field satisfying one of the following conditions:
	\begin{enumerate}[(a)]
		\item\label{item:intro-slim-finitely-generated} $K$ is a Hilbertian field.
		\item\label{item:intro-slim-toral-kummer-faithful} $K$ is a sub-$p$-adic field for a prime $p$.
		\item\label{item:intro-slim-hlf} $K$ is a higher local field of positive dimension.
	\end{enumerate}
	Then $\AbsGalGrp{K}^m$ is slim for every $m\geq2$.
\end{Icorollary}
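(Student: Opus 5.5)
The corollary has three cases, and I would handle them separately by reducing each to an earlier result in the paper. Case (a) follows from the split-embedding-problem result of Section~\ref{sec:split-embedding-problems-slim}. Cases (b) and (c) follow from Corollary~\ref{intro:solvable-galois}, once we know ab-faithfulness and the two auxiliary conditions (i) and (ii).

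\textbf{Case (a).} Let $K$ be Hilbertian. By a theorem of Uchida, Fried--V\"olklein and Pop (see Fried--Jarden, Field Arithmetic), every finite split embedding problem for $\AbsGalGrp{K}$ with abelian kernel has a proper solution; this is the solvability of split embedding problems with abelian kernel over Hilbertian fields. The statement in Section~\ref{sec:split-embedding-problems-slim} then gives slimness of $\AbsGalGrp{K}^m$ for all $m\ge2$, so this case is a direct citation.

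\textbf{Case (c).} Let $K$ be a higher local field of positive dimension. Theorem~\ref{intro:main}(c) gives that $\AbsGalGrp{K}$ is ab-faithful. It remains to check (i) and (ii).
\begin{enumerate}[(i)]
\item The last residue field is finite. For $\ell\ne\operatorname{char}K$, adjoining $\ell$-power roots of unity already gives an infinite extension of the finite residue field, unless $\ell$ equals the residue characteristic. In that exceptional case, $K$ has characteristic $0$ and contains a finite extension of $\mathbb{Q}_p$, over which the cyclotomic $\mathbb{Z}_p$-extension is infinite. Its compositum with $K$ stays infinite, because the algebraic closure of $\mathbb{Q}_p$ in $K$ is a finite extension.
\item If $\operatorname{char}K=p$, Artin--Schreier theory produces a degree-$p$ cyclic extension, for instance from the unramified extension of the last residue field, or because $K/\wp(K)\neq0$.
\end{enumerate}

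\textbf{Case (b).} Let $K$ be sub-$p$-adic, so $K\subset L$ for some finitely generated $L/\mathbb{Q}_p$. Conditions (i) and (ii) are immediate: $\operatorname{char}K=0$, and the $\ell$-adic cyclotomic character on $\AbsGalGrp{K}$ has infinite image, since $K(\mu_{\ell^\infty})$ contains $\mathbb{Q}(\mu_{\ell^\infty})$ and $K$ cannot contain infinitely many roots of unity of $\ell$-power order inside $L$.

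The main obstacle is ab-faithfulness of $\AbsGalGrp{K}$ for an arbitrary sub-$p$-adic $K$. Theorem~\ref{intro:main} does not cover $K$ directly, since it need not be discretely valued. My first attempt would be a Kummer-faithfulness argument. Take an open subgroup $H=\AbsGalGrp{K'}$, an open normal subgroup $N=\AbsGalGrp{M}$, and suppose $\sigma\in H/N$ acts trivially on $\Abelianization{N}$. Via Kummer theory, $\sigma$ then fixes every $\ell$-power root of every element of $M^\times$. For sub-$p$-adic fields the Kummer map on $M^\times$ is injective with controlled divisible part, so $\sigma$ acts trivially on $M^\times$, hence on $M$, and $\sigma=1$. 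If this route stalls, the fallback is a Hilbertian-style argument: a sub-$p$-adic field of infinite transcendence degree, or a number field, is Hilbertian or close to it, and I would try to embed into case (a).
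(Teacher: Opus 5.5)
\textbf{Cases (a) and (c).} These are sound and follow the paper. In (a) you combine Proposition~\ref{prop:abelian-split-solvable-slim} with the solvability of finite split embedding problems with abelian kernel over Hilbertian fields. That solvability statement is Ikeda's theorem; the Fried--V\"olklein and Pop results you cite concern PAC and ample fields. In (c) you combine Theorem~\ref{thm:main-hlf} with conditions (i) and (ii), as in Example~\ref{ex:cf-field-slim}.

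\textbf{The gap in (b).} It sits at the step you yourself flag as the main obstacle: you never prove that $\AbsGalGrp{K}$ is ab-faithful for a sub-$p$-adic $K$. Your reason for not using Theorem~\ref{thm:mixed-characteristic}, namely that $K$ ``need not be discretely valued'', is mistaken. That theorem assumes only a nontrivial discrete valuation of mixed characteristic, with no completeness or Henselian hypothesis; its proof passes to completions via Grunwald--Wang precisely for this reason. Every sub-$p$-adic field carries such a valuation:
\begin{enumerate}[(1)]
\item Write $K\subseteq L$ with $L$ finite over $\mathbb{Q}_p(t_1,\dots,t_d)$.
\item Extend the Gauss valuation, which has value group $\mathbb{Z}$ and residue characteristic $p$, to $L$. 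The extension has value group containing $\mathbb{Z}$ with finite index, so it is discrete.
\item Its restriction to $K$ is nontrivial, since $p\in K$ has positive value.
\end{enumerate}
Theorem~\ref{thm:mixed-characteristic} then gives $p$-ab-faithfulness, and Remark~\ref{prop:sigma-ab-faithful-properties} upgrades this to ab-faithfulness. This is exactly the paper's argument.

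\textbf{Your substitutes do not close the gap.}
\begin{enumerate}[(1)]
\item \emph{Kummer route.} Triviality of the conjugation action of $\sigma$ on $\Abelianization{N}$ does not mean that a lift of $\sigma$ fixes $\ell$-power roots of elements of $M^\times$. By \eqref{eq:kummer-cohomology}, when $\mu_{\ell^s}\subset M$ it only says that $\sigma$ acts on $M^\times/M^{\times\ell^s}$ through the cyclotomic character. When $\mu_{\ell^s}\not\subset M$ you need a descent like \eqref{eq:kummer-descent}, which the paper carries out only for exponent $p$. Torally Kummer-faithfulness, i.e.\ $\bigcap_n(M^\times)^n=\{1\}$, does not turn this twisted congruence into $\sigma=1$ without substantial further work that you have not supplied.
\item \emph{Hilbertian fallback.} This fails outright. $\mathbb{Q}_p$ and its finite extensions are sub-$p$-adic and have infinite transcendence degree over $\mathbb{Q}$, yet they are Henselian and hence not Hilbertian. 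Moreover, nothing transfers ab-faithfulness from a Hilbertian subfield or overfield to $K$.
\end{enumerate}

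\textbf{Minor point.} For condition (i) in (b) you need finiteness of $\mu_{\ell^\infty}(K')$ for every finite extension $K'/K$, not only for $K$. This holds because finite extensions of sub-$p$-adic fields are sub-$p$-adic.
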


\section{Ab-faithfulness and the prime-order test}
\label{sec:group-theoretic-criteria}

All homomorphisms of profinite groups are assumed to be continuous, and we take the closure of every commutator subgroup.
For a field $K$, fix a separable closure $K^{\mathrm{sep}}$ and write $\AbsGalGrp{K}\coloneq\Gal(K^{\mathrm{sep}}/K)$; an algebraic closure is denoted by $K^{\mathrm{alg}}$.
Let $\Sigma$ be a set of primes.
A finite group is a \emph{$\Sigma$-group} if every prime divisor of its order belongs to $\Sigma$.
We write $G^{\Sigma}$ for the maximal pro-$\Sigma$ quotient of a profinite group $G$.
For $\Sigma=\{\ell\}$, we use the notation $G(\ell)\coloneq G^{\{\ell\}}$.
We use continuous cohomology, with trivial action on the constant coefficient groups $\mathbb{Z}/n\mathbb{Z}$ unless otherwise specified.
For an abelian profinite group $A$, written additively, and an integer $n\ge1$, write $A/n\coloneq A/nA$.
Thus $\Abelianization{G}/n$ is the maximal abelian quotient of $G$ of exponent dividing $n$.

\begin{definition}
	Let $G$ be a profinite group.
	We say that $G$ is \emph{$\Sigma$-ab-faithful} if, for every open subgroup $H$ of $G$ and every open normal subgroup $N$ of $H$, the conjugation action of $H/N$ on $\Abelianization{(N^{\Sigma})}$ is faithful.
	For $\Sigma=\{\ell\}$, we also say \emph{$\ell$-ab-faithful}.
	For the set of all primes, we simply say \emph{ab-faithful}.
\end{definition}

Let $G$ be a group acting on groups $A$, $B$, and $C$ by automorphisms.
Assume that there are $G$-equivariant homomorphisms
\begin{equation*}
	A\hookleftarrow B\twoheadrightarrow C.
\end{equation*}
Then
\begin{equation*}
	\ker\bigl(G\to\operatorname{Aut}(A)\bigr)
	\subseteq\ker\bigl(G\to\operatorname{Aut}(B)\bigr)
	\subseteq\ker\bigl(G\to\operatorname{Aut}(C)\bigr).
\end{equation*}
In particular, if the action on $C$ is nontrivial (resp.\ faithful), then the action on $A$ is nontrivial (resp.\ faithful).
If the action on $A$ is trivial, then the action on $C$ is trivial.
We use this fact below without further mention.

\begin{lemma}[The prime-order test]
	\label{lem:prime-order-test}
	Let $G$ be a profinite group and $\Sigma$ a set of primes.
	Then the following conditions are equivalent:
	\begin{enumerate}[(i)]
		\item\label{item:prime-order-ab-faithful} The group $G$ is $\Sigma$-ab-faithful.
		\item\label{item:prime-order-abelianization} For every open subgroup $H$ of $G$ and every open normal subgroup $N$ of $H$ such that $H/N$ has prime order, the conjugation action of $H/N$ on $\Abelianization{(N^\Sigma)}$ is nontrivial.
		\item\label{item:prime-order-finite-quotient} For every pair $H,N$ as in \ref{item:prime-order-abelianization}, there exist $\ell\in\Sigma$ and an integer $s\ge1$ such that the conjugation action of $H/N$ on $\Abelianization{N}/\ell^s$ is nontrivial.
		\item\label{item:prime-order-cohomology} For every pair $H,N$ as in \ref{item:prime-order-abelianization}, there exist $\ell\in\Sigma$ and an integer $s\ge1$ such that the induced action of $H/N$ on $H^1(N,\mathbb{Z}/\ell^s\mathbb{Z})$ is nontrivial.
	\end{enumerate}
\end{lemma}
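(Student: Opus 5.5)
We prove the cycle (i)$\Rightarrow$(ii)$\Rightarrow$(iii)$\Rightarrow$(iv)$\Rightarrow$(i), though it is cleaner to show (ii)$\Leftrightarrow$(iii)$\Leftrightarrow$(iv) directly and then (i)$\Leftrightarrow$(ii). The implication (i)$\Rightarrow$(ii) is immediate: a faithful action of a nontrivial group of prime order is nontrivial.

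\textbf{Equivalence of (ii), (iii), (iv).} Fix $H,N$ with $H/N$ of prime order. Since $N$ is normal in $H$, both $N^{[1]}$ and the kernel of $N\to N^\Sigma$ are characteristic in $N$, hence normal in $H$. Therefore $H/N$ acts on $\Abelianization{(N^\Sigma)}$, on $\Abelianization{N}/\ell^s$, and on $H^1(N,\mathbb{Z}/\ell^s\mathbb{Z})=\Hom(\Abelianization{N}/\ell^s,\mathbb{Z}/\ell^s\mathbb{Z})$.

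The group $\Abelianization{(N^\Sigma)}$ is the maximal pro-$\Sigma$ quotient of $\Abelianization{N}$, namely $\prod_{\ell\in\Sigma}\Abelianization{N}(\ell)=\varprojlim$ of the finite quotients $\prod_{\ell}\Abelianization{N}/\ell^s$. Consequently an automorphism acts trivially on it if and only if it acts trivially on every quotient $\Abelianization{N}/\ell^s$ with $\ell\in\Sigma$ and $s\ge1$. Here one has to check that $\Abelianization{(N^\Sigma)}=(\Abelianization{N})^\Sigma$; this holds because both are the maximal abelian pro-$\Sigma$ quotient of $N$. This gives (ii)$\Leftrightarrow$(iii).

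Pontryagin duality between the finite group $\Abelianization{N}/\ell^s$ and its dual $\Hom(-,\mathbb{Z}/\ell^s\mathbb{Z})$ is $H/N$-equivariant, since the action on $\mathbb{Z}/\ell^s\mathbb{Z}$ is trivial. An automorphism therefore acts trivially on one if and only if it acts trivially on the other, which gives (iii)$\Leftrightarrow$(iv).

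\textbf{(ii)$\Rightarrow$(i).} This is the main step. Let $H\subseteq G$ be open and $N\trianglelefteq H$ open, and suppose the kernel $K/N$ of the action of $H/N$ on $\Abelianization{(N^\Sigma)}$ is nontrivial. Choose $x\in K\setminus N$ and replace $x$ by a suitable power so that $xN$ has prime order $p$. Put $H'=\langle x\rangle N$, which is open, with $N\trianglelefteq H'$ and $H'/N$ of order $p$. The action of $H'/N$ on $\Abelianization{(N^\Sigma)}$ is the restriction of the $H/N$-action, so it is trivial because $x\in K$. This contradicts (ii). Hence the kernel is trivial, and the action is faithful.

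The only potential subtlety is whether (ii) for the pair $(H',N)$ involves the same module $\Abelianization{(N^\Sigma)}$. It does, since the module depends only on $N$. So no passage to intermediate subgroups of $N$ is needed, and the argument closes.
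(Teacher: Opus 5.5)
Your proof is correct and follows the paper's argument essentially step for step. You reduce (ii)$\Rightarrow$(i) to the preimage $\langle x\rangle N$ of a prime-order subgroup of the kernel, use the primary decomposition $\Abelianization{(N^\Sigma)}\simeq\prod_{\ell\in\Sigma}\varprojlim_s\Abelianization{N}/\ell^s$ for (ii)$\Leftrightarrow$(iii), and use the identification $H^1(N,\mathbb{Z}/\ell^s\mathbb{Z})=\Hom_{\mathrm{cont}}(\Abelianization{N}/\ell^s,\mathbb{Z}/\ell^s\mathbb{Z})$ for (iii)$\Leftrightarrow$(iv).

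One small correction: $\Abelianization{N}/\ell^s$ need not be finite when $G$ is not topologically finitely generated. Nothing in your argument depends on finiteness, though, because continuous characters of a profinite abelian group of exponent $\ell^s$ still separate points, and $\bigcap_s \ell^s A=0$ for every abelian pro-$\ell$ group $A$.
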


\begin{proof}
	Condition~\ref{item:prime-order-ab-faithful} implies \ref{item:prime-order-abelianization}.
	Conversely, if the kernel of the action of $H/N$ on $\Abelianization{(N^\Sigma)}$ is nontrivial, it contains a subgroup of prime order.
	Replacing $H$ by the inverse image of this subgroup contradicts \ref{item:prime-order-abelianization}.

	By the universal properties of abelianization and maximal pro-$\Sigma$ quotients, and by the primary decomposition of abelian profinite groups, there is a natural $H/N$-equivariant isomorphism
	\begin{equation*}
		\Abelianization{(N^\Sigma)}
		\simeq\prod_{\ell\in\Sigma}\varprojlim_s \Abelianization{N}/\ell^s.
	\end{equation*}
	Thus an automorphism of the left-hand side is trivial if and only if it is trivial on every quotient $\Abelianization{N}/\ell^s$.
	This proves the equivalence of \ref{item:prime-order-abelianization} and \ref{item:prime-order-finite-quotient}.

	For every profinite group $N$ and every integer $n\ge2$, there is a natural identification
	\begin{equation}
		\label{eq:finite-cohomology-duality}
		H^1(N,\mathbb{Z}/n\mathbb{Z})
		=\Hom_{\mathrm{cont}}(\Abelianization{N}/n,\mathbb{Z}/n\mathbb{Z}).
	\end{equation}
	Since these homomorphisms separate points, the actions on $\Abelianization{N}/n$ and $H^1(N,\mathbb{Z}/n\mathbb{Z})$ have the same kernel.
	Taking $n=\ell^s$ proves the equivalence of \ref{item:prime-order-finite-quotient} and \ref{item:prime-order-cohomology}.
\end{proof}

\begin{remark}
	\label{prop:sigma-ab-faithful-properties}
	If $\Sigma\subseteq\Sigma'$, every $\Sigma$-ab-faithful group is $\Sigma'$-ab-faithful.
	If $G$ is $\Sigma$-ab-faithful, then $G^\Sigma$ is $\Sigma$-ab-faithful.
	In particular, if $G$ is $\ell$-ab-faithful, then both $G$ and $G(\ell)$ are ab-faithful.
\end{remark}

\section{Ab-faithfulness and finite split embedding problems}
\label{sec:split-embedding-problems}

We give a sufficient condition for $\ell$-ab-faithfulness in terms of finite split embedding problems and apply it in Example~\ref{Iexampleemb}.
A finite embedding problem for a profinite group $G$ consists of surjections $\pi\colon G\twoheadrightarrow A$ and $\alpha\colon B\twoheadrightarrow A$, where $A$ and $B$ are finite.
A \emph{proper solution} is a surjection $\theta\colon G\twoheadrightarrow B$ satisfying $\alpha\circ\theta=\pi$.
An embedding problem is \emph{split} if $\alpha$ has a group-theoretic section.

An \emph{elementary abelian $\ell$-group} is a finite-dimensional $(\mathbb{Z}/\ell\mathbb{Z})$-vector space, regarded as an additive group.

\begin{proposition}
	\label{prop:elementary-split-criterion}
	Let $G$ be a profinite group and $\ell$ a prime.
	Assume that every finite split embedding problem for $G$ with elementary abelian $\ell$-kernel has a proper solution.
	Then $G$ is $\ell$-ab-faithful.
\end{proposition}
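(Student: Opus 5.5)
We verify condition~\ref{item:prime-order-cohomology} of Lemma~\ref{lem:prime-order-test} with $\Sigma=\{\ell\}$ and $s=1$. Fix an open subgroup $H$ of $G$ and an open normal subgroup $N$ of $H$ with $H/N$ of prime order $q$. We must show that $H/N$ acts nontrivially on $H^1(N,\mathbb{Z}/\ell\mathbb{Z})=\Hom(N,\mathbb{Z}/\ell\mathbb{Z})$. The idea is to produce a quotient of $G$ that restricts to a surjection of $N$ onto an $\ell$-group on which $H/N$ acts through a nontrivial module.

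First we pass to a finite quotient. Choose an open normal subgroup $U$ of $G$ with $U\subseteq N$, and put $A\coloneq G/U$, $\bar H\coloneq H/U$, $\bar N\coloneq N/U$. Let $V\coloneq\mathbb{F}_\ell[A/\bar N]$, where $A/\bar N$ is the $A$-set of cosets; we may also take the induced module $\mathbb{F}_\ell[A]$ if that is simpler. Form the split embedding problem $B\coloneq V\rtimes A\twoheadrightarrow A$ with elementary abelian $\ell$-kernel $V$. By hypothesis there is a proper solution $\theta\colon G\twoheadrightarrow B$ lifting $G\to A$. Then $M\coloneq\theta^{-1}(V)=U$, so $\theta$ restricts to a surjection $U\twoheadrightarrow V$. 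Using $\theta^{-1}(V\rtimes\bar N)=N$, we obtain a surjection $N\twoheadrightarrow V\rtimes\bar N$, and hence a quotient $N\twoheadrightarrow (V\rtimes \bar N)^{\mathrm{ab}}/\ell$ that is equivariant for conjugation by $H$.

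It remains to show that the action of $H/N=\bar H/\bar N$ on $\bigl(V\rtimes\bar N\bigr)^{\mathrm{ab}}/\ell$ is nontrivial. This is the main obstacle, because the abelianization kills $[\bar N,V]$ and the quotient $\bar N^{\mathrm{ab}}$ may carry trivial action. The $V$-part of the abelianization is the coinvariant module $V_{\bar N}$. Restrict the $A$-module $\mathbb{F}_\ell[A/\bar N]$ to $\bar H$. The coset $\bar N$ itself gives a trivial summand, but we choose $V$ more cleverly: take $V\coloneq\mathbb{F}_\ell[\bar H/\bar N]$ induced up to $A$, i.e.\ $V=\mathbb{F}_\ell[A/\bar N]$ as above. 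Then $V_{\bar N}$ has a direct summand isomorphic, as an $\bar H/\bar N$-module, to $\mathbb{F}_\ell[\bar H/\bar N]$, coming from the double cosets inside $\bar H$. Each such double coset $\bar N h\bar N=h\bar N$ is a single coset because $\bar N$ is normal in $\bar H$. So $V_{\bar N}\supseteq$ the summand $\mathbb{F}_\ell[\bar H/\bar N]$, the regular representation of the group of order $q$ over $\mathbb{F}_\ell$, on which the generator acts by cyclic permutation, hence nontrivially since $q\ge2$. This works even when $q=\ell$.

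Finally, we check that this summand survives as an $H/N$-equivariant quotient of $N^{\mathrm{ab}}/\ell$. We have $(V\rtimes\bar N)^{\mathrm{ab}}=V_{\bar N}\times\bar N^{\mathrm{ab}}$ since the extension splits, and projecting to $V_{\bar N}$ and then to the summand gives an $\bar H$-equivariant surjection. By the kernel-inclusion remark preceding Lemma~\ref{lem:prime-order-test}, the action on $N^{\mathrm{ab}}/\ell$ is nontrivial. Lemma~\ref{lem:prime-order-test} then yields $\ell$-ab-faithfulness. The bookkeeping to verify is that conjugation by $h\in H$ on $N$ corresponds under $\theta$ to conjugation by $\theta(h)\in V\rtimes\bar H$, and that $V$ acts trivially on $V_{\bar N}$ modulo the relevant commutators, so the action factors through $\bar H/\bar N$.
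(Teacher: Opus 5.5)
Your proof is correct and is essentially the paper's argument. Both solve a split embedding problem whose kernel is an $\ell$-elementary permutation module: the paper uses the regular module $(\mathbb{Z}/\ell\mathbb{Z})[G/V]$ with an explicit $H$-equivariant projection onto $(\mathbb{Z}/\ell\mathbb{Z})[H/N]$, which is exactly your passage to $\bar N$-coinvariants followed by projection onto the double cosets inside $\bar H$. Both then obtain an $H/N$-equivariant surjection $\Abelianization{N}/\ell\twoheadrightarrow(\mathbb{Z}/\ell\mathbb{Z})[H/N]$. Since the regular action is faithful for any $H/N$, your reduction to prime order via Lemma~\ref{lem:prime-order-test} is harmless but unnecessary.
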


\begin{proof}
	Let $H$ be an open subgroup of $G$ and let $N$ be an open normal subgroup of $H$.
	Put $V\coloneq\bigcap_{g\in G}gNg^{-1}$.
	The hypothesis gives a proper solution
	\begin{equation*}
		\theta:G\twoheadrightarrow(\mathbb{Z}/\ell\mathbb{Z})[G/V]\rtimes(G/V)
	\end{equation*}
	over the natural map $G\twoheadrightarrow G/V$.
	The linear map
	\begin{equation*}
		r:(\mathbb{Z}/\ell\mathbb{Z})[G/V]\twoheadrightarrow(\mathbb{Z}/\ell\mathbb{Z})[H/N],
		\qquad
		[gV]\longmapsto
		\begin{cases}
			[gN], & g\in H,    \\
			0,    & g\notin H,
		\end{cases}
	\end{equation*}
	is well-defined, surjective, and $H$-equivariant.
	Since $\theta(H)=(\mathbb{Z}/\ell\mathbb{Z})[G/V]\rtimes(H/V)$, composing $\theta|_H$ with $(a,hV)\mapsto(r(a),hN)$ gives a surjection
	\begin{equation*}
		H\twoheadrightarrow(\mathbb{Z}/\ell\mathbb{Z})[H/N]\rtimes(H/N)
	\end{equation*}
	over $H\twoheadrightarrow H/N$.
	It maps $N$ onto $(\mathbb{Z}/\ell\mathbb{Z})[H/N]$ and therefore induces the equivariant surjection $\Abelianization{N}/\ell\twoheadrightarrow(\mathbb{Z}/\ell\mathbb{Z})[H/N]$.
	The regular action is faithful.
	Therefore, the surjections
	\begin{equation*}
		\Abelianization{(N(\ell))}\twoheadrightarrow \Abelianization{N}/\ell
		\twoheadrightarrow(\mathbb{Z}/\ell\mathbb{Z})[H/N]
	\end{equation*}
	prove $\ell$-ab-faithfulness.
\end{proof}

\begin{example}\label{Iexampleemb}
	Let $\ell$ be a prime.
	For each field $K$ listed below, every finite split embedding problem for $\AbsGalGrp{K}$ with elementary abelian $\ell$-kernel has a proper solution.
	In particular, these groups are $\ell$-ab-faithful by Proposition~\ref{prop:elementary-split-criterion}.
	\begin{enumerate}[(a)]
		\item\label{item:emb-hilbertian} $K$ is a Hilbertian field, by Ikeda's theorem \cite{MR121364} (see also~\cite[Proposition~18.4.5]{MR4647281}).
		\item $K=\operatorname{Frac}(k[[x,y]])$ for any field $k$, by \cite[Theorem~7.1]{MR2677893}.
		\item\label{item:emb-positive-characteristic} $K$ is a field of characteristic $\ell$ with a valuation whose value group is not $\ell$-divisible, by \cite[Theorem~1.1]{MR2793956}.
	\end{enumerate}
\end{example}

\section{Ab-faithfulness for discretely valued fields of mixed characteristic}
\label{sec:valued-fields}

In this section, we prove $p$-ab-faithfulness for every discretely valued field of mixed characteristic $(0,p)$ (Theorem~\ref{thm:mixed-characteristic}).
We study the actions on first cohomology and use~\eqref{eq:finite-cohomology-duality} to prove faithfulness of the actions on abelian Galois quotients.
For a field $K$, write $\mu_n\coloneq \mu_{n}(K^{\mathrm{sep}})$ for the $n$-th roots of unity in $K^{\mathrm{sep}}$, and $\mu_{\ell^\infty}(K)$ for the $\ell$-power roots of unity contained in $K$.

Let $K$ be a discretely valued field.
The valuation is always nontrivial and normalized by $v_K(K^\times)=\mathbb{Z}$.
Write
\begin{equation*}
	\mathcal{O}_K,\qquad \mathfrak{m}_K,\qquad
	\resfield{K}\coloneq\mathcal{O}_K/\mathfrak{m}_K,\qquad U_K\coloneq1+\mathfrak{m}_K,
\end{equation*}
for the valuation ring, maximal ideal, residue field, and principal units, and write $\widehat K$ for the completion of $K$.
The valued field $K$ has \emph{mixed characteristic $(0,p)$} if $\operatorname{char}K=0$ and $\operatorname{char}\resfield{K}=p>0$.
A valued field is \emph{Henselian} if its valuation extends uniquely to algebraic extensions.
Complete discretely valued fields are Henselian, and their finite extensions are complete for the unique extension valuation; see \cite[Chapter~II, Theorems~4.8 and~6.6]{MR1697859}.

\begin{lemma}
	\label{lem:roots-of-unity}
	Let $K$ be a discretely valued field of mixed characteristic $(0,p)$.
	Then $\mu_{p^\infty}(K)$ is finite.
\end{lemma}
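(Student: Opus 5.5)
The plan is to bound the order of a primitive $p^n$-th root of unity in $K$ via the valuation. The key input is the classical identity: if $\zeta\in K$ is a primitive $p^n$-th root of unity with $n\ge1$, then
\begin{equation*}
	v_K(1-\zeta)=\frac{v_K(p)}{p^{n-1}(p-1)}.
\end{equation*}
Granting this, the left side is a positive integer because $1-\zeta\ne0$ and $\zeta$ is a unit. Here $\zeta$ is integral over $\mathbb{Z}$, so it lies in $\mathcal{O}_K$, and its reduction is a $p$-power root of unity in characteristic $p$, hence equal to $1$. Therefore $p^{n-1}(p-1)\le v_K(p)$. Since $\operatorname{char}K=0$ and $\operatorname{char}\resfield{K}=p$, $p$ is a nonzero element of $\mathfrak{m}_K$, so $e\coloneq v_K(p)$ is a finite positive integer. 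This bounds $n$, so $\mu_{p^\infty}(K)$, which is a cyclic group because it is a finite subgroup of $K^\times$ at each level, has bounded exponent and is finite.

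To prove the identity, I would use the cyclotomic polynomial
\begin{equation*}
	\Phi_{p^n}(X)=\sum_{i=0}^{p-1}X^{ip^{n-1}},
\end{equation*}
which gives
\begin{equation*}
	p=\Phi_{p^n}(1)=\prod_{a}(1-\zeta^a),
\end{equation*}
the product running over $a\in(\mathbb{Z}/p^n)^\times$. Each factor $1-\zeta^a$ has the same valuation as $1-\zeta$. Indeed,
\begin{equation*}
	\frac{1-\zeta^a}{1-\zeta}=1+\zeta+\dots+\zeta^{a-1}
\end{equation*}
lies in $\mathcal{O}_K$, and by symmetry (writing $\zeta=(\zeta^a)^b$) so does its inverse. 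Taking $v_K$ of both sides gives $e=\varphi(p^n)\,v_K(1-\zeta)$, which is the identity. All roots $\zeta^a$ lie in $K$, so no extension of the valuation is needed. This is the point where one must be careful, since $K$ need not be Henselian.

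There is no real obstacle beyond checking that the units argument works with only a discrete valuation and no completeness. An equivalent fallback is to pass to $\widehat K$ and note that $\mu_{p^\infty}(K)\subseteq\mu_{p^\infty}(\widehat K)$, but this is not needed.
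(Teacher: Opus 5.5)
Your proof is correct and is essentially the paper's argument. The paper cites the identity $p=\varepsilon(1-\zeta)^{\varphi(p^n)}$ with $\varepsilon$ an algebraic unit (Neukirch, Ch.~I, Lemma~10.1) and takes valuations to get $v_K(p)=\varphi(p^n)v_K(1-\zeta)\ge\varphi(p^n)$, while you reprove that identity directly from $\Phi_{p^n}(1)=\prod_a(1-\zeta^a)$ and the fact that each ratio $(1-\zeta^a)/(1-\zeta)$ is a unit in $\mathcal{O}_K$.
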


\begin{proof}
	For a primitive $p^n$-th root of unity $\zeta\in K$, the cyclotomic identity $p=\varepsilon(1-\zeta)^{\varphi(p^n)}$, with $\varepsilon$ an algebraic unit, follows from \cite[Chapter~I, Lemma~10.1]{MR1697859}.
	Since $v_K(\varepsilon)=0$, taking valuations gives
	\begin{equation*}
		v_K(p)=\varphi(p^n)v_K(1-\zeta)\ge\varphi(p^n),
	\end{equation*}
	since $v_K(1-\zeta)$ is a positive integer.
	Hence $n$ is bounded.
\end{proof}

We use the following form of Kummer theory.
Let $L$ be a field of characteristic zero, and let $n\ge2$ be an integer such that $\mu_n\subset L$.
We use $n=p$ in this section and $n=\ell^s$ in Lemma~\ref{lem:residue-zero-lift}.
Since $\AbsGalGrp{L}$ acts trivially on $\mu_n$, Kummer theory \cite[Chapter~IV, \S3, Exercise~8]{MR1697859} gives natural isomorphisms
\begin{equation}
	\label{eq:kummer-cohomology}
	H^1(\AbsGalGrp{L},\mathbb{Z}/n\mathbb{Z})\otimes_{\mathbb{Z}/n\mathbb{Z}}\mu_n
	\xrightarrow{\sim}H^1(\AbsGalGrp{L},\mu_n)
	\xrightarrow{\sim}L^\times/L^{\times n}.
\end{equation}
These isomorphisms are equivariant for field automorphisms, acting on both tensor factors.
In particular, their action on $\mu_n$ need not be trivial.
We write Kummer classes additively: $[xy]=[x]+[y]$ and $[x^a]=a[x]$.
Let $\sigma$ be an automorphism of $L$ that fixes $\mu_n$.
Since $\mu_n$ is free of rank one over $\mathbb{Z}/n\mathbb{Z}$ and $\sigma$ acts trivially on it, tensoring with $\mu_n$ preserves triviality of the action.
Thus~\eqref{eq:kummer-cohomology} shows that $\sigma$ acts trivially on $H^1(\AbsGalGrp{L},\mathbb{Z}/n\mathbb{Z})$ if and only if
\begin{equation}
	\label{eq:kummer-trivial}
	\frac{\sigma(x)}{x}\in L^{\times n}\quad\text{for every }x\in L^\times.
\end{equation}

When $L$ does not contain $\mu_p$, we apply Kummer theory over $L(\mu_p)$ as follows.
Let $\Delta$ be a finite group of order prime to $p$, and let $\chi:\Delta\to\mathbb{Z}_p^\times$ be a character.
For a $\mathbb{Z}_p[\Delta]$-module $M$, write
\begin{equation*}
	M^\chi\coloneq\{m\in M:\delta m=\chi(\delta)m\text{ for every }\delta\in\Delta\}.
\end{equation*}
This is the $\chi$-eigensubmodule for the action of $\Delta$.
If $pM=0$, we read the scalars $\chi(\delta)$ modulo $p$.
The averaging operator
\begin{equation}
	\label{eq:chi-projector}
	\frac1{|\Delta|}\sum_{\delta\in\Delta}\chi(\delta)^{-1}\delta
\end{equation}
projects onto $M^\chi$.
Thus $M^\chi$ is a direct summand, taking $\chi$-components is exact, and reduction modulo $p$ gives a natural isomorphism
\begin{equation}
	\label{eq:chi-reduction}
	M^\chi/pM^\chi\xrightarrow{\sim}(M/pM)^\chi.
\end{equation}
Put $E\coloneq L(\mu_p)$ and $\Delta\coloneq\Gal(E/L)$, and let $\chi$ be the Teichm\"uller lift of the action of $\Delta$ on $\mu_p$.
Thus $\chi(\delta)$ is the unique $(p-1)$-st root of unity in $\mathbb{Z}_p^\times$ whose reduction gives the exponent by which $\delta$ acts on $\mu_p$.
Since $|\Delta|$ divides $p-1$, restriction--corestriction identifies $H^1(\AbsGalGrp{L},\mathbb{Z}/p\mathbb{Z})$ with $H^1(\AbsGalGrp{E},\mathbb{Z}/p\mathbb{Z})^\Delta$.
Under~\eqref{eq:kummer-cohomology}, applied over $E$, tensoring this invariant part with $\mu_p$ gives the $\chi$-component.
We therefore obtain a natural isomorphism
\begin{equation}
	\label{eq:kummer-descent}
	H^1(\AbsGalGrp{L},\mathbb{Z}/p\mathbb{Z})\otimes_{\mathbb{Z}/p\mathbb{Z}}\mu_p
	\xrightarrow{\sim}(E^\times/E^{\times p})^\chi.
\end{equation}
It is equivariant for automorphisms of $E$ that preserve $L$, acting on both tensor factors on the left.

\begin{lemma}
	\label{lem:prime-order-detection}
	Let $K$ be a discretely valued field of mixed characteristic $(0,p)$, and let $L/K$ be a cyclic extension of prime degree $\ell$.
	Assume that $\mu_p\subset K$.
	Then $\Gal(L/K)$ acts nontrivially on $H^1(\AbsGalGrp{L},\mathbb{Z}/p\mathbb{Z})$.
\end{lemma}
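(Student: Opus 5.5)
By the criterion~\eqref{eq:kummer-trivial} with $n=p$, applied to $\sigma$ a generator of $\Gal(L/K)$, it suffices to produce some $x\in L^\times$ with $\sigma(x)/x\notin L^{\times p}$. This criterion applies because $\mu_p\subset K\subset L$, so $\sigma$ fixes $\mu_p$.

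Fix a discrete valuation $w$ on $L$ extending $v_K$, normalized with $w(L^\times)=\mathbb{Z}$. Put $e_L\coloneq w(p)$. To detect non-$p$-th powers, I first record an elementary criterion that does not need Henselianity. An element $z\in L^\times$ with $p\nmid w(z)$ is not a $p$-th power. Moreover, if $z\in L$ satisfies $0<w(z)<pe_L/(p-1)$ and $p\nmid w(z)$, then $1+z\notin L^{\times p}$. To see the second claim, suppose $1+z=y^p$. Then $y$ is a unit, and since the residue field has characteristic $p$, also $\bar y=1$, so $y=1+u$ with $w(u)=a>0$. Expanding $(1+u)^p$ gives three cases: if $a<e_L/(p-1)$ then $w(z)=pa$; if $a>e_L/(p-1)$ then $w(z)=a+e_L>pe_L/(p-1)$; if $a=e_L/(p-1)$ then $w(z)\ge pe_L/(p-1)$. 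Each case contradicts the hypotheses on $w(z)$.

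\textbf{Case 1: $\sigma$ does not fix $w$.} Then $w$ and $w\circ\sigma^{-1}$ are inequivalent valuations on $L$. By the approximation theorem, choose $x$ with $w(x)=0$ and $w(\sigma(x))=(w\circ\sigma^{-1})(x)=1$. Then $w(\sigma(x)/x)=1$, which is prime to $p$, so $\sigma(x)/x$ is not a $p$-th power.

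\textbf{Case 2: $\sigma$ fixes $w$.} Here I would split according to how $L/K$ looks at $w$. If the residue extension $\resfield{L}/\resfield{K}$ is nontrivial, choose a unit $a\in\mathcal O_L$ with $\sigma(\bar a)\ne\bar a$ and set $x=1+a\pi_K$. Then
\begin{equation*}
	\frac{\sigma(x)}{x}-1=\frac{(\sigma(a)-a)\pi_K}{1+a\pi_K}
\end{equation*}
has valuation $e_L/e_K\cdot 1$, which equals $1$ when $L/K$ is unramified at $w$. Valuation $1$ is prime to $p$ and lies below $pe_L/(p-1)$, so the criterion applies. If instead $L/K$ is ramified at $w$ (so $e_L=\ell e_K$), I would take $x=1+\pi_L^k$ for a suitable $k$. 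One has $w(\sigma(\pi_L^k)-\pi_L^k)=k+i$, where $i\coloneq w(\sigma(\pi_L)/\pi_L-1)$, and the aim is to choose $k$ so that $k+i$ is prime to $p$ and $k+i<pe_L/(p-1)$. This is possible once $i<pe_L/(p-1)-1$.

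The main obstacle is Case 2 when $L/K$ is ramified, and especially the wild case $\ell=p$. There the break $i$ can be large, and one must check that it still leaves room below $pe_L/(p-1)$. My first attempt would be to bound $i$ by the standard estimate $i\le pe_L/(p-1)$ for ramification breaks of degree-$p$ extensions. In the borderline case I would switch to Kummer generators: since $\mu_p\subset K$, write $L=K(\alpha)$ with $\alpha^p\in K$. Then $\sigma(\alpha)/\alpha=\zeta$ is a primitive $p$-th root of unity, and I would test $x=\alpha$ or $x=1+\alpha$ directly, for instance by showing $\zeta\notin L^{\times p}$ when $\mu_{p^2}\not\subset L$, which Lemma~\ref{lem:roots-of-unity} makes manageable. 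The tame ramified case $\ell\ne p$ also needs care, because $\sigma(\pi_L)/\pi_L$ may be a unit whose residue is a $p$-th power. For that case I would again use $x=1+\pi_L^k$ with $i=0$, choosing $k=1$.
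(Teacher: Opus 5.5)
Your reduction to finding $x\in L^\times$ with $\sigma(x)/x\notin L^{\times p}$ is sound, and so are your criterion for non-$p$-th powers and Case~1. In Case~1 note that $w(\sigma(x))=(w\circ\sigma)(x)$, so you should approximate with respect to $w$ and $w\circ\sigma$; this is cosmetic.

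The genuine gap is in Case~2. When $\sigma$ fixes $w$, $w$ is the only extension of $v_K$, so $ef=\ell$. Besides the two situations you treat, there is a third: $\ell=p$, $e=1$, and $\resfield{L}/\resfield{K}$ \emph{purely inseparable} of degree $p$. In that case:
\begin{itemize}
\item $\sigma$ induces the identity on $\resfield{L}$, so no unit $a$ with $\sigma(\bar a)\ne\bar a$ exists;
\item your ramified branch, which assumes $e_L=\ell e_K$, does not apply either.
\end{itemize}
This case occurs as soon as $\resfield{K}$ is imperfect. For example, let $K$ be the completion of $\mathbb{Q}_p(\mu_p)(t)$ at the Gauss valuation and $L=K(t^{1/p})$. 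Then $\sigma(t^{1/p})=\zeta t^{1/p}$ with $\bar\zeta=1$. Fields with imperfect residue field are exactly what the lemma is used for later, e.g.\ mixed-characteristic higher local fields of dimension at least two in Theorem~\ref{thm:main-hlf}, via Proposition~\ref{prop:prime-order-detection} and Theorem~\ref{thm:mixed-characteristic}. So this case cannot be discarded. Handling it in your framework would need a separate bound on $\min_{b\in\mathcal{O}_L}w(\sigma(b)-b)$, which you do not supply.

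The wild totally ramified case is also left unfinished, for three reasons:
\begin{itemize}
\item The bound $i\le pe_L/(p-1)$ is too weak.
\item The Kummer fallback is not carried out, and $x=\alpha$ fails when $\mu_{p^2}\subset L$.
\item The recipe $x=1+\pi_L^k$ alone can fail. For $\mathbb{Q}_2(\sqrt{-1})/\mathbb{Q}_2$ one has $i=1$, and every $k$ gives $w(\sigma(\pi_L^k)-\pi_L^k)$ either even or at least $4=pe_L/(p-1)$.
\end{itemize}
This part is repairable. Here $\pi_L$ is a root of an Eisenstein polynomial $g$ over the completion, and $w(g'(\pi_L))=(p-1)(i+1)$. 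The terms of $g'(\pi_L)$ have pairwise distinct valuations modulo $p$, so this is at most $w(p\pi_L^{p-1})=e_L+p-1$, which gives $i\le e_L/(p-1)$. Then $x=\pi_L$ works if $p\nmid i$, and $x=1+\pi_L$ works if $p\mid i$.

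The paper avoids all ramification analysis. Hilbert~90 writes each norm-one principal unit as $\sigma(x)/x$, so a trivial action forces the group $M$ of such units into $U_L^p$. Finiteness of $\mu_{p^\infty}(L)$ and a minimal-valuation argument then make $M$ finite. This contradicts the explicit infinite family $(1+ta)/(1+t\sigma(a))$. That argument does not care whether the residue field is perfect, which is why it covers the later applications.
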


\begin{proof}
	Choose a normalized extension $v_L$ of the valuation to $L$.
	Let $\sigma$ generate $\Gal(L/K)$.
	Consider the group of norm-one principal units
	\begin{equation*}
		M\coloneq\{m\in U_L:N_{L/K}(m)=1\}.
	\end{equation*}
	We first show that $M$ is infinite.
	Choose $a\in L$ with $\sigma(a)\ne a$.
	For every $t\in K$ such that $v_L(t)+v_L(a)>0$ and $v_L(t)+v_L(\sigma(a))>0$, the element
	\begin{equation*}
		m_t\coloneq\frac{1+ta}{1+t\sigma(a)}
	\end{equation*}
	belongs to $U_L$, and its norm is $1$.
	Moreover, $m_t=m_s$ implies $(t-s)(a-\sigma(a))=0$, hence $t=s$.
	There are infinitely many such $t$, and hence $M$ is infinite.

	Next, assume that the action on $H^1(\AbsGalGrp{L},\mathbb{Z}/p\mathbb{Z})$ is trivial.
	We will show that $M$ is finite, contradicting the above result.
	Let $m\in M$.
	By Hilbert's theorem 90, $m$ has the form $\sigma(x)/x$ for some $x\in L^\times$.
	Since $\mu_p\subset K$, the automorphism $\sigma$ fixes $\mu_p$.
	The assumed trivial action and the criterion~\eqref{eq:kummer-trivial} give
	\begin{equation*}
		\sigma(x)/x\in L^{\times p}.
	\end{equation*}
	Hence $m=y^p$ for some $y\in L^\times$.
	Since $m\in U_L$, taking valuations gives
	\begin{equation*}
		p\,v_L(y)=v_L(y^p)=v_L(m)=0.
	\end{equation*}
	Thus $y\in\mathcal{O}_L^\times$, and reducing $y^p=m$ modulo $\mathfrak{m}_L$ gives $\bar y^p=\bar m=1$.
	Since the residue field has characteristic $p$, we have $(\bar y-1)^p=\bar y^p-1=0$.
	Hence $\bar y=1$, which gives $y\in U_L$ and $M\subseteq U_L^p$.

	By Lemma~\ref{lem:roots-of-unity}, choose a generator $\zeta$ of the finite cyclic group $\mu_{p^\infty}(L)$.
	By the same argument, every $p$-power root of unity in $L$ has residue $1$ and hence $\zeta\in U_L$.
	If $\zeta$ belonged to $M$, then the inclusion $M\subseteq U_L^p$ would imply that $L$ contains a $p$-th root of $\zeta$ of strictly larger $p$-power order.
	This contradicts the choice of $\zeta$.
	Hence $\zeta\in U_L\setminus M$.
	Therefore $N_{L/K}(\mu_{p^\infty}(L))$ is a nontrivial group of $p$-power roots of unity and contains $\mu_p$.
	The equality $m=y^p\in M$ gives $N_{L/K}(y)^p=1$.
	Since $N_{L/K}(y)\in\mu_p$, we can choose $\eta\in\mu_{p^\infty}(L)$ such that $N_{L/K}(\eta)=N_{L/K}(y)$.
	Since $y,\eta\in U_L$ and $N_{L/K}(y)=N_{L/K}(\eta)$, we have $y/\eta\in M$ and $m=\eta^p(y/\eta)^p$.
	It follows that $M\subseteq\mu_{p^\infty}(L)M^p$.

	Write $p^s$ for the order of $\zeta$.
	Raising the last inclusion to the $p^s$-th power removes the root-of-unity factor and gives
	\begin{equation*}
		M^{p^s}\subseteq M^{p^{s+1}}=(M^{p^s})^p.
	\end{equation*}
	If $M^{p^s}\ne\{1\}$, choose $b\in M^{p^s}\setminus\{1\}$ for which the positive integer $v_L(b-1)$ is minimal.
	The inclusion gives $b=c^p$ for some $c\in M^{p^s}$.
	Since $c\in U_L$ and the residue field has characteristic $p$, the sum $1+c+\cdots+c^{p-1}$ has residue zero.
	Hence
	\begin{equation}
		\label{eq:principal-unit-power-factorization}
		b-1=(c-1)(1+c+\cdots+c^{p-1})
	\end{equation}
	implies $v_L(b-1)>v_L(c-1)$, contradicting minimality.
	Hence $M^{p^s}=\{1\}$.
	Therefore, every element of $M$ is a root of $X^{p^s}-1$.
	Thus $M$ is finite, a contradiction.
\end{proof}

For complete fields, Proposition~\ref{prop:prime-order-detection} removes the assumption $\mu_p\subset K$.
Since Lemma~\ref{lem:prime-order-detection} already covers $p=2$, the remaining case has odd residue characteristic.
We first record a fact about $\mathbb{Z}_p$-linear actions.

\begin{lemma}
	\label{lem:finite-order-mod-p}
	Let $p$ be an odd prime, and let $M$ be a torsion-free $\mathbb{Z}_p$-module such that $\bigcap_{n\ge1}p^nM=0$.
	If a finite-order $\mathbb{Z}_p$-linear automorphism $T$ of $M$ acts trivially on $M/pM$, then $T=1$.
\end{lemma}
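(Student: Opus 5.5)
Write $T=1+D$ with $D\coloneq T-1$. The hypothesis says $D(M)\subseteq pM$. The plan is to show $D(M)\subseteq p^kM$ for every $k\ge1$; since $\bigcap_{n}p^nM=0$, this gives $D=0$.

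For the induction, suppose $D(M)\subseteq p^kM$ for some $k\ge1$. Let $n$ be the order of $T$. Expanding $T^n=1$ gives
\begin{equation*}
	0=T^n-1=\sum_{j=1}^{n}\binom{n}{j}D^j.
\end{equation*}
Applied to $m\in M$, the term $D^j(m)$ lies in $p^{jk}M$. Torsion-freeness of $M$ is what allows cancelling powers of $p$ throughout.

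The difficulty is that $n$ might be divisible by $p$, so the linear term $nD$ does not obviously dominate. To avoid this, first reduce to the case where $n$ is prime. If $T\ne1$, replace $T$ by a suitable power $T^{n/q}$ of prime order $q$. This power still acts trivially on $M/pM$, and it is nontrivial. It therefore suffices to show that an automorphism of prime order $q$ that is trivial mod $p$ is the identity.

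For $q\ne p$: from $0=qD+\sum_{j\ge2}\binom{q}{j}D^j$, the sum lies in $p^{2k}M$. Since $q$ is a unit in $\mathbb{Z}_p$, we get $D(m)\in p^{2k}M\subseteq p^{k+1}M$, which completes the induction.

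For $q=p$: the terms with $2\le j\le p-1$ have coefficients divisible by $p$, so they lie in $p^{1+2k}M$. The last term $D^p(m)$ lies in $p^{pk}M$, and $pk\ge 3k\ge k+2$ because $p\ge3$ and $k\ge1$. Hence $pD(m)\in p^{k+2}M$.

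The main obstacle is turning this into a statement about $D(m)$ itself, i.e. concluding $D(m)\in p^{k+1}M$. This needs torsion-freeness in the form $px\in p^{k+2}M\Rightarrow x\in p^{k+1}M$. To get it, write $px=p^{k+2}y$; then $p(x-p^{k+1}y)=0$, and torsion-freeness gives $x=p^{k+1}y$. This closes the induction, and it is exactly where $p$ odd is used: for $p=2$ the estimate $pk\ge k+2$ fails at $k=1$ (as $-1$ on $\mathbb{Z}_2$ shows).
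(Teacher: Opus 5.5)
Your proof is correct and follows essentially the same route as the paper: you reduce to a power of prime order, expand $(1+D)^q=1$ binomially, split into the cases $q\ne p$ and $q=p$ (where $p>2$ is exactly what makes the top term $D^p$ small enough), cancel using torsion-freeness, and conclude from $\bigcap_n p^nM=0$. The only difference is cosmetic: the paper proves $(T-1)M\subseteq p(T-1)M$ in one step, using $(T-1)^jM\subseteq p^{j-1}(T-1)M$, and then iterates, whereas you run an explicit induction on $k$ with $D(M)\subseteq p^kM$.
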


\begin{proof}
	If $T\ne1$, a power of $T$ has prime order $\ell$ and still acts trivially on $M/pM$.
	Replacing $T$ by this power, put $A\coloneq T-1$.
	Since $AM\subseteq pM$, we have $A^jM\subseteq p^{j-1}AM$ for $j\ge1$.
	Expanding $(1+A)^\ell=1$ gives
	\begin{equation*}
		\ell AM\subseteq\sum_{j=2}^\ell\binom{\ell}{j} p^{j-1}AM\subseteq \ell pAM.
	\end{equation*}
	For the last inclusion, if $\ell\ne p$, then $\ell$ is a $p$-adic unit and every summand has a factor $p$.
	If $\ell=p$, the terms with $2\le j<p$ have a factor $p^2$, and the last term has the factor $p^{p-1}$, which is divisible by $p^2$ since $p>2$.
	Since $M$ is torsion-free, we can cancel $\ell$ and obtain $AM\subseteq pAM$.
	Iteration gives $AM\subseteq p^nM$ for every $n\ge1$, hence $AM=0$.
	This contradicts the choice of $T$.
\end{proof}

For the application of Lemma~\ref{lem:finite-order-mod-p}, we use the following properties of logarithms.

\begin{lemma}
	\label{lem:principal-unit-logarithm}
	Let $E$ be a complete discretely valued field of mixed characteristic $(0,p)$.
	Then the usual logarithm is a homomorphism $\log:U_E\to E$ that commutes with valuation-preserving automorphisms.
	Its image is a $\mathbb{Z}_p$-submodule of $E$ satisfying
	\begin{equation*}
		p^2\mathcal{O}_E\subseteq\log U_E,
		\qquad \bigcap_{n\ge1}p^n\log U_E=0.
	\end{equation*}
\end{lemma}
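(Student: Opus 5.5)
The plan is to define $\log(1+x)=\sum_{k\ge1}(-1)^{k+1}x^k/k$ for $x\in\mathfrak{m}_E$ and verify convergence and the three properties directly.

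\emph{Convergence and the homomorphism property.} For $v_E(x)\ge1$ we have
\begin{equation*}
	v_E(x^k/k)\ge k-e\,v_p(k),
\end{equation*}
where $e\coloneq v_E(p)$. This tends to infinity, so the series converges in the complete field $E$. The identity $\log((1+x)(1+y))=\log(1+x)+\log(1+y)$ holds as an identity of formal power series in $\mathbb{Q}[[x,y]]$. Absolute convergence of the double series, since the terms tend to $0$ uniformly in the non-archimedean setting, justifies rearrangement, so $\log$ is a homomorphism. A valuation-preserving automorphism $\sigma$ is continuous, fixes $\mathbb{Q}$, and maps $\mathfrak{m}_E$ to itself; applying it termwise to the partial sums gives $\sigma\circ\log=\log\circ\sigma$. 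The image is a subgroup. It is a $\mathbb{Z}_p$-submodule because $U_E$ is a $\mathbb{Z}_p$-module: $u^a$ for $a\in\mathbb{Z}_p$ is the limit of $u^{a_n}$ with $a_n\in\mathbb{Z}$, using $u^{p^n}\to1$. Continuity of $\log$ then gives $\log(u^a)=a\log u$.

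\emph{The inclusion $p^2\mathcal{O}_E\subseteq\log U_E$.} I would use the exponential. Its series $\exp(y)=\sum y^k/k!$ converges for
\begin{equation*}
	v_E(y)>e/(p-1),
\end{equation*}
and on that disc $\exp$ and $\log$ are mutually inverse isometries. For $y\in p^2\mathcal{O}_E$ we have $v_E(y)\ge2e>e/(p-1)$ for every $p$, including $p=2$, where $2e>e$. Hence $y=\log(\exp y)$ with $\exp y\in U_E$. The main care point is checking $v_E(y^k/k!)>v_E(y)$ for $k\ge2$ using $v_p(k!)\le(k-1)/(p-1)$. This gives
\begin{equation*}
	v_E(y^k/k!)-v_E(y)\ge(k-1)\bigl(v_E(y)-e/(p-1)\bigr)>0,
\end{equation*}
so $\exp y\in U_E$ and the inverse relations follow formally.

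\emph{The condition $\bigcap_n p^n\log U_E=0$.} I expect this to be the main obstacle, though it is easy once $\log U_E$ is bounded. For $x\in\mathfrak{m}_E$, the bound $v_E(x^k/k)\ge k-e\,v_p(k)$ is bounded below by some constant $-C$ depending only on $e$ and $p$, since $k-e\log_p k$ is bounded below. So $\log U_E\subseteq\pi^{-C}\mathcal{O}_E$ for a uniformizer $\pi$. Then
\begin{equation*}
	p^n\log U_E\subseteq\pi^{ne-C}\mathcal{O}_E,
\end{equation*}
and the intersection of these over all $n$ is $0$ because $v_E$ is discrete and nontrivial.
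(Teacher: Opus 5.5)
Your proof is correct. The inclusion $p^2\mathcal{O}_E\subseteq\log U_E$ goes the same way as in the paper, and the other two assertions go by a somewhat different route.

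\textbf{The inclusion.} Your exponential argument is the Neukirch argument that the paper cites to get the isomorphism $\log\colon 1+p^2\mathcal{O}_E\to p^2\mathcal{O}_E$; you have simply written it out. Here the approaches agree.

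\textbf{The $\mathbb{Z}_p$-module structure.} The paper notes that $\log U_E$ is a subgroup containing the open subgroup $p^2\mathcal{O}_E$. It is therefore open, hence closed, so limits of integer multiples stay inside it. You instead give $U_E$ its $\mathbb{Z}_p$-module structure and transport it through $\log$. That step needs continuity of $\log$, which you assert without comment. It is true, and it follows at once from your own estimate $v_E(x^k/k)\ge k\,v_E(x)-e\,v_p(k)$ with $e=v_E(p)$.

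\textbf{The condition $\bigcap_n p^n\log U_E=0$.} The paper argues through the $p$-power map. From $v_E(u^p-1)\ge v_E(u-1)+1$ it gets $U_E^{p^r}\subseteq 1+p^2\mathcal{O}_E$ once $r+1\ge 2v_E(p)$. It then uses that $\log$ maps $1+p^2\mathcal{O}_E$ into $p^2\mathcal{O}_E$ to conclude $p^r\log U_E\subseteq p^2\mathcal{O}_E$. You bound the image directly from the series coefficients: since $k-e\,v_p(k)$ is bounded below, $\log U_E$ lies in a fixed fractional ideal $\pi^{-C}\mathcal{O}_E$.

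\textbf{What each route buys.} Your route is more self-contained and makes the key fact explicit, namely that $\log U_E$ is bounded. It needs only the surjectivity of $\log$ onto $p^2\mathcal{O}_E$, not the full isomorphism on $1+p^2\mathcal{O}_E$. The paper's route avoids any analysis of the series beyond the cited result. It reuses the principal-unit factorization already used in the proof of Lemma~\ref{lem:prime-order-detection}, and it gives an explicit exponent $r$ with $p^r\log U_E\subseteq p^2\mathcal{O}_E$.
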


\begin{proof}
	The logarithm is an equivariant homomorphism and restricts to an isomorphism
	\begin{equation*}
		\log:1+p^2\mathcal{O}_E\xrightarrow{\sim}p^2\mathcal{O}_E.
	\end{equation*}
	The power-series arguments in the proofs of \cite[Chapter~II, Propositions~5.4 and~5.5]{MR1697859} use completeness and estimates on the coefficients and apply to arbitrary residue fields.
	Thus $\log U_E$ contains $p^2\mathcal{O}_E$ and is open, hence closed, in $E$.
	Taking limits of integer multiples shows that it is a $\mathbb{Z}_p$-module.

	The factorization~\eqref{eq:principal-unit-power-factorization} gives $v_E(u^p-1)\ge v_E(u-1)+1$ for $u\in U_E$.
	Iteration gives $U_E^{p^r}\subseteq1+\mathfrak{m}_E^{r+1}$.
	Choose $r\ge0$ with $r+1\ge2v_E(p)$.
	Thus $U_E^{p^r}\subseteq1+p^2\mathcal{O}_E$ and $p^r\log U_E\subseteq p^2\mathcal{O}_E$.
	It follows that, for every $n\ge1$,
	\begin{equation*}
		p^n\log U_E\subseteq p^{n+2-r}\mathcal{O}_E.
	\end{equation*}
	Since $v_E(p)>0$, it follows that $\bigcap_{n\ge1}p^n\log U_E=0$.
\end{proof}

\begin{proposition}
	\label{prop:prime-order-detection}
	Let $K$ be a complete discretely valued field of mixed characteristic $(0,p)$, and let $L/K$ be a cyclic extension of prime degree $\ell$.
	Then $\Gal(L/K)$ acts nontrivially on $H^1(\AbsGalGrp{L},\mathbb{Z}/p\mathbb{Z})$.
\end{proposition}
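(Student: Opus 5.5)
The plan is to reduce to Lemma~\ref{lem:prime-order-detection} when $\mu_p\subset K$, and otherwise to pass to $E\coloneq L(\mu_p)$ and work with the $\chi$-part of the principal units. Applying Lemma~\ref{lem:finite-order-mod-p} to their logarithms should upgrade triviality modulo $p$ to an exact identity, which the semilinearity of the Galois action then contradicts.

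\textbf{Reduction.} If $\mu_p\subset K$, the claim is Lemma~\ref{lem:prime-order-detection}. Otherwise $p$ is odd. Put $F\coloneq K(\mu_p)$. Then $E=LF$, and $\Gal(E/K)$ is abelian because it embeds in $\Gal(L/K)\times\Gal(F/K)$. Let $\Delta\coloneq\Gal(E/L)$ and let $\chi$ be its Teichm\"uller character. Choose $\tau\in\Gal(E/K)$ restricting to a generator $\sigma$ of $\Gal(L/K)$. Since $\Gal(E/K)$ is abelian, $\tau$ commutes with $\Delta$ and acts on $\mu_p$ by some scalar $c\in\mathbb{F}_p^\times$; let $\tilde c\in\mathbb{Z}_p^\times$ be its Teichm\"uller lift. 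Suppose, for contradiction, that $\sigma$ acts trivially on $H^1(\AbsGalGrp{L},\mathbb{Z}/p\mathbb{Z})$. By the equivariance of \eqref{eq:kummer-descent}, acting on both tensor factors, $\tau$ then acts on $(E^\times/E^{\times p})^\chi$ as multiplication by $c$.

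\textbf{Transfer to logarithms.} Put $X\coloneq(\log U_E)^\chi$, taken inside $E$ as a $\mathbb{Z}_p[\Delta]$-module. By Lemma~\ref{lem:principal-unit-logarithm}, $X$ is torsion-free, satisfies $\bigcap_n p^nX=0$, and contains $(p^2\mathcal{O}_E)^\chi$. I would show that $T\coloneq\tilde c^{-1}\tau$ acts trivially on $X/pX$; $T$ has finite order since $\tau$ does. The chain of maps is
\begin{equation*}
	U_E\to U_E/U_E^p\to E^\times/E^{\times p},
\end{equation*}
followed by taking $\chi$-parts, which is exact by \eqref{eq:chi-projector} and \eqref{eq:chi-reduction}.

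The main obstacle is the comparison between $\log U_E/p$ and $U_E/U_E^p$. The kernel of $\log$ on $U_E$ is $\mu_{p^\infty}(E)$, which is finite by Lemma~\ref{lem:roots-of-unity}. The kernel of $U_E/U_E^p\to E^\times/E^{\times p}$ is trivial by the residue argument in the proof of Lemma~\ref{lem:prime-order-detection}: if $u\in U_E$ equals $y^p$, then $y\in U_E$. So $\tau-\tilde c$ maps $X$ into $pX$ only up to this finite torsion. I expect to absorb the torsion by replacing $X$ with $p^kX$ for $k$ large. Concretely, I would take $Y\coloneq\log(U_E^{p^k})^\chi$ with $U_E^{p^k}$ torsion-free, and check that $Y$ still contains a nonzero $\mathcal{O}_L$-stable piece $p^N\mathcal{O}_E\cap E^\chi$. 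This bookkeeping is the delicate step; the rest is formal. Lemma~\ref{lem:finite-order-mod-p} then gives $\tau=\tilde c$ on $Y$.

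\textbf{Contradiction.} The eigenspace $E^\chi$ is a nonzero $L$-subspace of $E$: it is the image of the projector \eqref{eq:chi-projector} on $E$, which is nonzero, for instance because $E^\chi$ contains the Kummer generators. So $Y$ contains $p^N\mathcal{O}_E\cap E^\chi$, which contains some $x\ne0$ together with $ax$ for every $a\in\mathcal{O}_L$. Since $\tau$ is semilinear with respect to $\sigma$,
\begin{equation*}
	\tilde c\,ax=\tau(ax)=\sigma(a)\tau(x)=\sigma(a)\tilde c\,x .
\end{equation*}
Hence $\sigma(a)=a$ for all $a\in\mathcal{O}_L$, contradicting $\sigma\neq1$ because $L=\operatorname{Frac}\mathcal{O}_L$. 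Therefore $\Gal(L/K)$ acts nontrivially on $H^1(\AbsGalGrp{L},\mathbb{Z}/p\mathbb{Z})$.
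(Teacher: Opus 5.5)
Your outline is the paper's: reduce to Lemma~\ref{lem:prime-order-detection} when $\mu_p\subset K$; otherwise pass to $E=L(\mu_p)$, take the $\chi$-part of $\log U_E$, twist $\tau$ by the Teichm\"uller scalar, and use Lemma~\ref{lem:finite-order-mod-p} to get $T=1$. Semilinearity on the nonzero $L$-space $E^\chi$ then gives the contradiction. The gap is in the one step that uses the hypothesis, namely showing that $T=\tilde c^{-1}\tau$ acts trivially on $X/pX$. You call it ``the delicate step'' and do not prove it, and the repair you sketch would fail. Passing to $p^kX$ via $U_E^{p^k}$ discards exactly the information you need. For $k\ge1$ every element of $U_E^{p^k}$ is already a $p$-th power in $E^\times$, so $U_E^{p^k}$ maps to $1$ in $E^\times/E^{\times p}$. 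The hypothesis on $(E^\times/E^{\times p})^\chi$ therefore says nothing about $U_E^{p^k}$ directly. In the same way, raising a non-$\chi$-isotypic preimage $u$ of $x\in X$ to the $p^k$-th power to kill the root-of-unity error makes its Kummer class trivial.

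The torsion obstacle is not real, because you only need a surjection, not an isomorphism. Since $\log(U_E^p)=p\log U_E$, the logarithm induces a surjection $U_E/U_E^p\twoheadrightarrow\log U_E/p\log U_E$ that commutes with $\Gal(E/K)$. Exactness of $\chi$-parts and \eqref{eq:chi-reduction} turn it into a $\tau$-equivariant surjection $(U_E/U_E^p)^\chi\twoheadrightarrow X/pX$. You already have the injection $(U_E/U_E^p)^\chi\hookrightarrow(E^\times/E^{\times p})^\chi$. So triviality of $T$ on $(E^\times/E^{\times p})^\chi$ passes first to this subgroup and then to the quotient $X/pX$. The finite kernel $\mu_{p^\infty}(E)$ of $\log$ never matters. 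Concretely, a preimage of $x\in X$ can be moved into the $\chi$-part of $U_E$ by applying the projector \eqref{eq:chi-projector} multiplicatively, which does not change its logarithm. Then Lemma~\ref{lem:finite-order-mod-p} applies to $X$ itself, which contains the $\mathcal{O}_L$-stable piece $(p^2\mathcal{O}_E)^\chi$. Your semilinearity argument then goes through unchanged, and this is exactly the paper's proof. One small correction: justify $E^\chi\neq0$ by linear independence of the automorphisms in $\Delta$, so that the projector is a nonzero $L$-linear map. The phrase ``the Kummer generators'' does not suffice, since a given Kummer generator of $E/L$ lies in the eigenspace of its own character, which need not be $\chi$.
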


\begin{proof}
	If $\mu_p\subset K$, apply Lemma~\ref{lem:prime-order-detection}.
	Assume therefore that $\mu_p\not\subset K$.
	Then $p$ is odd.
	Put $E\coloneq L(\mu_p)$ and $\Delta\coloneq\Gal(E/L)$.
	Let $\chi:\Gal(E/K)\to\mathbb{Z}_p^\times$ be the Teichm\"uller lift of the action on $\mu_p$.
	All $\chi$-components below are taken with respect to $\Delta$, using $\chi|_\Delta$.
	Let $\sigma$ generate $\Gal(L/K)$, and choose a lift $\tau\in\Gal(E/K)$ of $\sigma$.
	To cancel the action on the factor $\mu_p$ in~\eqref{eq:kummer-descent}, put
	\begin{equation*}
		M\coloneq(\log U_E)^\chi,
		\qquad T\coloneq\chi(\tau)^{-1}\tau.
	\end{equation*}
	Since $E/K$ is abelian, $T$ preserves $M$.
	Also $T^\ell=1$ on $M$, since $\tau^\ell\in\Delta$ acts there as multiplication by $\chi(\tau)^\ell$.

	We first show that $T\ne1$ on $M$.
	The averaging operator~\eqref{eq:chi-projector} on $E$ is $L$-linear and nonzero by linear independence of field automorphisms.
	Hence $E^\chi$ is a nonzero $L$-vector space.
	By Lemma~\ref{lem:principal-unit-logarithm}, the image $\log U_E$ contains $p^2\mathcal{O}_E$.
	Applying the averaging operator shows that $M$ spans $E^\chi$ over $\mathbb{Q}_p$.
	If $T=1$ on $M$, then $\tau$ acts on $E^\chi$ as multiplication by $\chi(\tau)$.
	For $0\ne b\in E^\chi$ and $x\in L$, we have $xb\in E^\chi$, and hence
	\begin{equation*}
		\sigma(x)=\tau(x)=\frac{\tau(xb)}{\tau(b)}
		=\frac{\chi(\tau)xb}{\chi(\tau)b}=x.
	\end{equation*}
	This contradicts the choice of $\sigma$.

	Next, assume that the action on $H^1(\AbsGalGrp{L},\mathbb{Z}/p\mathbb{Z})$ is trivial.
	The isomorphism~\eqref{eq:kummer-descent} shows that $T$ acts trivially on $(E^\times/E^{\times p})^\chi$.
	Inclusion of principal units and the logarithm induce $T$-equivariant maps
	\begin{equation*}
		(E^\times/E^{\times p})^\chi
		\hookleftarrow (U_E/U_E^p)^\chi\twoheadrightarrow M/pM.
	\end{equation*}
	The injection follows because a $p$-th root of a principal unit is again a principal unit.
	The surjection follows from $\log(u^p)=p\log u$, exactness of $\chi$-components, and~\eqref{eq:chi-reduction}.
	Hence $T$ acts trivially on $M/pM$.
	The module $M\subseteq E$ is torsion-free, and $\bigcap_{n\ge1}p^nM=0$ by Lemma~\ref{lem:principal-unit-logarithm}, since $M\subseteq\log U_E$.
	Lemma~\ref{lem:finite-order-mod-p} therefore gives $T=1$ on $M$, contradicting $T\ne1$.
\end{proof}

In the proof of Theorem~\ref{thm:mixed-characteristic}, we use the exponent-$p$ case of the Grunwald--Wang theorem \cite[Theorem~1]{MR2018557} to prove that the restriction map~\eqref{eq:mixed-local-restriction} is surjective.

\begin{theorem}
	\label{thm:mixed-characteristic}
	Let $K$ be a discretely valued field of mixed characteristic $(0,p)$.
	Then the conjugation action of $\Gal(L/K)$ on $\Abelianization{\AbsGalGrp{L}}/p$ is faithful for every finite Galois extension $L/K$.
	In particular, $\AbsGalGrp{K}$ is $p$-ab-faithful.
\end{theorem}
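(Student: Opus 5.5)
By the prime-order test (Lemma~\ref{lem:prime-order-test}), the claim reduces to the following: for every finite extension $K'$ of $K$ and every cyclic extension $L'/K'$ of prime degree $\ell$, the group $\Gal(L'/K')$ acts nontrivially on $H^1(\AbsGalGrp{L'},\mathbb{Z}/p\mathbb{Z})$. Faithfulness on $\Abelianization{\AbsGalGrp{L}}/p$ for a finite Galois $L/K$ follows the same way. If the kernel of the action of $\Gal(L/K)$ is nontrivial, it contains an element of prime order generating $\Gal(L/K')$ for an intermediate field $K'$. The action on $\Abelianization{\AbsGalGrp{L}}/p$, equivalently on $H^1(\AbsGalGrp{L},\mathbb{Z}/p\mathbb{Z})$ by~\eqref{eq:finite-cohomology-duality}, is then trivial for $\Gal(L/K')$. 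Finite extensions of $K$ are again discretely valued of mixed characteristic $(0,p)$, because an extension of a discrete valuation to a finite extension is discrete. So it suffices to prove the prime-degree statement for an arbitrary such $K$, with $L/K$ cyclic of degree $\ell$ and generator $\sigma$.

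The case $\mu_p\subset K$ is exactly Lemma~\ref{lem:prime-order-detection}, which does not need completeness. In general, I would reduce to Proposition~\ref{prop:prime-order-detection} by passing to completions. Choose an extension $w$ of $v_K$ to $L$, and let $\widehat K$ and $\widehat L=L\widehat K$ be the completions. If $\sigma$ does not fix $w$, then $L$ has $\ell$ distinct places over $v_K$ permuted by $\sigma$. In that case I would use weak approximation together with Grunwald--Wang in exponent $p$ to build a class in $L^\times/L^{\times p}$, or more precisely its $\chi$-component over $L(\mu_p)$, that is nontrivial locally at $w$ and trivial at $\sigma w$. Such a class cannot be $\sigma$-invariant.

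If $\sigma$ fixes $w$, then $\widehat L/\widehat K$ is cyclic of degree $\ell$ with Galois group generated by $\sigma$, because the decomposition group is the whole group. Proposition~\ref{prop:prime-order-detection} then gives a class $c\in H^1(\AbsGalGrp{\widehat L},\mathbb{Z}/p\mathbb{Z})$ with $\sigma c\ne c$. The key step is that the restriction map
\begin{equation}
	\label{eq:mixed-local-restriction}
	H^1(\AbsGalGrp{L},\mathbb{Z}/p\mathbb{Z})\to H^1(\AbsGalGrp{\widehat L},\mathbb{Z}/p\mathbb{Z})
\end{equation}
is surjective and $\sigma$-equivariant. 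Given this, lift $c$ to a global class $\tilde c$. Then $\sigma\tilde c-\tilde c$ restricts to $\sigma c-c\ne0$, so $\sigma$ acts nontrivially on the global cohomology.

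Equivariance holds because $\sigma$ extends continuously to $\widehat L$. Surjectivity is where the cited exponent-$p$ Grunwald--Wang theorem enters, applied to the single place $w$ of $L$. Every $\mathbb{Z}/p$-extension of $\widehat L$ should be realized as the completion of a global $\mathbb{Z}/p$-extension of $L$. In exponent $p$ no Wang obstruction arises: for $p$ odd there is none, and for $p=2$ one has $\mu_2\subset K$, a case already covered. Alternatively, when $\mu_p\subset L$ this is Kummer-theoretic density. Elements of $L^\times$ are dense in $\widehat L^\times$, and $\widehat L^{\times p}$ is open because it contains $1+p^2\mathfrak m$ by Hensel's lemma, so $L^\times/L^{\times p}\to\widehat L^\times/\widehat L^{\times p}$ is surjective. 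The general case descends through the $\chi$-components~\eqref{eq:kummer-descent}, since taking $\chi$-components is exact.

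I expect the main obstacle to be this surjectivity, together with the bookkeeping in the non-decomposed case. If the Grunwald--Wang citation covers only one place, the split case needs a separate argument. There I would use approximation at the $\ell$ places simultaneously: choose $x\in L(\mu_p)$ close to a non-$p$-th power at one place and close to $1$ at the others. This gives a $\chi$-class whose $\sigma$-translate differs from it locally, so the class is not $\sigma$-invariant.
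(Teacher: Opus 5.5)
Your route is essentially the paper's. You reduce to a cyclic $L/K$ of prime degree $\ell$ and split according to whether $\sigma$ fixes the chosen place $w$. In the non-split case you apply Proposition~\ref{prop:prime-order-detection} to $\widehat L_w/\widehat K$ and lift through a surjective equivariant restriction. In the split case you take a class supported at a single place of the orbit. Your Kummer-plus-approximation argument is a valid elementary substitute for the exponent-$p$ Grunwald--Wang theorem: openness of $p$-th powers, density of $L(\mu_p)$ in the product of completions, and exact $\chi$-descent since $|\Delta|$ is prime to $p$. It also works at all places of the orbit simultaneously, so the split case needs no separate surjectivity argument.

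There is one step you never justify. In the split case you need a \emph{nonzero} local class at $w$, that is, $H^1(\AbsGalGrp{\widehat L_w},\mathbb{Z}/p\mathbb{Z})\neq0$, or equivalently a nonzero $\chi$-component of the Kummer group of $\widehat L_w(\mu_p)$. Without it $\sigma$ merely permutes zero factors and the argument gives nothing. Taking $x$ ``close to a non-$p$-th power'' does not supply this. If $\mu_p\not\subset\widehat L_w$, a uniformizer $\pi$ of $\widehat L_w$ is not a $p$-th power in $\widehat L_w(\mu_p)$, because its valuation there is the ramification index, which divides $p-1$. But its class is $\Delta$-invariant, so its $\chi$-projection vanishes, since $\chi|_\Delta$ is nontrivial.

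The paper fills this explicitly with Lemma~\ref{lem:roots-of-unity}. For $F=\widehat L_w$ the degrees $[F(\mu_{p^n}):F]$ are unbounded, since otherwise a finite, hence discretely valued, extension of $F$ would contain $\mu_{p^\infty}$. These Galois groups are abelian with prime-to-$p$ part dividing $p-1$, so one of them has a quotient of order $p$. Alternatively, the module $M=(\log U_E)^\chi$ from the proof of Proposition~\ref{prop:prime-order-detection}, taken with $E=F(\mu_p)$, is nonzero and satisfies $\bigcap_n p^nM=0$. Hence $M/pM\neq0$, and this gives a nonzero class in $(E^\times/E^{\times p})^\chi$. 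With this addition your proof is complete.
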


\begin{proof}
	We first prove that the action of $\Gal(L/K)$ on $H^1(\AbsGalGrp{L},\mathbb{Z}/p\mathbb{Z})$ is faithful.
	As in Lemma~\ref{lem:prime-order-test}, a nontrivial kernel would contain a subgroup of prime order.
	Replacing $K$ by its fixed field reduces the proof to showing nontriviality when $L/K$ is cyclic of prime degree $\ell$.
	Choose a normalized extension $v$ of the valuation to $L$, and let $\sigma$ generate $\Gal(L/K)$.
	Let $\Omega$ be the orbit of $v$ under $\Gal(L/K)$.
	Its distinct normalized valuations are pairwise inequivalent.
	The Grunwald--Wang theorem gives a $\Gal(L/K)$-equivariant surjection
	\begin{equation}
		\label{eq:mixed-local-restriction}
		\operatorname{res}:H^1(\AbsGalGrp{L},\mathbb{Z}/p\mathbb{Z})
		\twoheadrightarrow\prod_{w\in\Omega}H^1(\AbsGalGrp{\widehat L_w},\mathbb{Z}/p\mathbb{Z}).
	\end{equation}
	We show that $\sigma$ acts nontrivially on its target.

	Assume first that $\sigma$ preserves $v$.
	Then $\Omega=\{v\}$.
	Restriction identifies $\Gal(\widehat L_v/\widehat K)$ with $\Gal(L/K)$ by \cite[Chapter~II, Proposition~9.6]{MR1697859}.
	Proposition~\ref{prop:prime-order-detection} therefore gives a nontrivial action on $H^1(\AbsGalGrp{\widehat L_v},\mathbb{Z}/p\mathbb{Z})$.

	Assume next that $\sigma$ does not preserve $v$.
	Since $\ell$ is prime, $\sigma$ cyclically permutes the $\ell$ valuations in $\Omega$.
	Each factor of the target of~\eqref{eq:mixed-local-restriction} is nonzero.
	Indeed, put $F\coloneq\widehat L_w$.
	The degrees $[F(\mu_{p^n}):F]$ are unbounded: otherwise the union of these extensions would be a finite discretely valued extension containing all $p$-power roots of unity, contrary to Lemma~\ref{lem:roots-of-unity}.
	These extensions are abelian, and the prime-to-$p$ part of each degree divides $p-1$.
	Hence $\Gal(F(\mu_{p^n})/F)$ has a quotient of order $p$ for some $n$.
	Thus $H^1(\AbsGalGrp{F},\mathbb{Z}/p\mathbb{Z})\ne0$.
	An element of the product in~\eqref{eq:mixed-local-restriction} that is nonzero in exactly one factor is moved by $\sigma$.

	In both cases the action on the target is nontrivial.
	Since~\eqref{eq:mixed-local-restriction} is equivariant and surjective, the action on $H^1(\AbsGalGrp{L},\mathbb{Z}/p\mathbb{Z})$ is nontrivial.
	By the reduction to prime degree, the action is faithful for every finite Galois extension $L/K$.
	By~\eqref{eq:finite-cohomology-duality}, the action on $\Abelianization{\AbsGalGrp{L}}/p$ is faithful as well.
	Applying the cohomological condition of Lemma~\ref{lem:prime-order-test} with $\ell=p$ and $s=1$, also over finite extensions of $K$, proves that $\AbsGalGrp{K}$ is $p$-ab-faithful.
\end{proof}

\section{Ab-faithfulness for higher local fields}
\label{sec:higher-local-fields}

A \emph{higher local field} of dimension $n\geq0$ is a field $K=K_n$ with a residue tower
\begin{equation*}
	K_n,\ K_{n-1},\ \ldots,\ K_0
\end{equation*}
such that each $K_i$, for $i\geq1$, is complete discretely valued with $\resfield{K_i}=K_{i-1}$, and $K_0$ is finite.
In this section, we prove ab-faithfulness for higher local fields of positive dimension.
By Example~\ref{Iexampleemb}\ref{item:emb-positive-characteristic}, the absolute Galois group of every discretely valued field of positive characteristic is ab-faithful.
In characteristic zero, additional hypotheses are needed: the Newton--Puiseux theorem gives $\AbsGalGrp{\mathbb{C}((t))}\simeq\widehat{\mathbb{Z}}$, which is not ab-faithful.
The following lemma treats the case of residue characteristic zero.

\begin{lemma}
	\label{lem:residue-zero-lift}
	Let $K$ be a Henselian discretely valued field with $\operatorname{char}\resfield{K}=0$.
	Assume that $\AbsGalGrp{\resfield{K}}$ is ab-faithful.
	Then the following conditions are equivalent:
	\begin{enumerate}[(i)]
		\item\label{item:residue-zero-ab-faithful} $\AbsGalGrp{K}$ is ab-faithful.
		\item\label{item:residue-zero-slim} $\AbsGalGrp{K}$ is slim.
		\item\label{item:residue-zero-roots-of-unity} For every finite extension $\lambda/\resfield{K}$ and every prime $\ell$, the group $\mu_{\ell^\infty}(\lambda)$ is finite.
	\end{enumerate}
\end{lemma}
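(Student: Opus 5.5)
The plan is to prove (i)$\Rightarrow$(ii)$\Rightarrow$(iii)$\Rightarrow$(i). The first implication is the general fact that ab-faithfulness implies slimness. For the other two we use the structure of $\AbsGalGrp{K}$ in residue characteristic zero. Since $K$ is Henselian with $\operatorname{char}\resfield{K}=0$, every finite extension is tamely ramified. Hence there is a split exact sequence
\begin{equation*}
	1\to\widehat{\mathbb{Z}}(1)\to\AbsGalGrp{K}\to\AbsGalGrp{\resfield{K}}\to1,
\end{equation*}
where $\AbsGalGrp{\resfield{K}}$ acts on the inertia group $I\simeq\widehat{\mathbb{Z}}(1)=\varprojlim\mu_n$ through the cyclotomic character. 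The same holds for every finite extension $L$ of $K$, with inertia $I_L$ open in $I$.

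(ii)$\Rightarrow$(iii). Suppose $\mu_{\ell^\infty}(\lambda)$ is infinite for some finite $\lambda/\resfield{K}$ and some prime $\ell$. Then $\mu_{\ell^\infty}\subset\lambda$. Let $L$ be the unramified extension of $K$ with residue field $\lambda$. The $\ell$-part $\mathbb{Z}_\ell(1)$ of the inertia $I_L$ is then centralized by $\AbsGalGrp{L}$: the action of $\AbsGalGrp{\lambda}$ on $\mathbb{Z}_\ell(1)$ is trivial, and $I_L$ is abelian. Now take the open subgroup $H$ of $\AbsGalGrp{L}$ that is the inverse image of $\AbsGalGrp{\lambda}$ and is ramified only at $\ell$-power levels. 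Concretely, $H$ is the kernel of $\AbsGalGrp{L}\to\prod_{q\ne\ell}\mathbb{Z}_q(1)/\ldots$. This approach is awkward, so instead we simply note that the nontrivial closed subgroup $\mathbb{Z}_\ell(1)\subset I_L$ is central in the open subgroup $\AbsGalGrp{L}$. That contradicts slimness, which forbids nontrivial elements centralizing an open subgroup.

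(iii)$\Rightarrow$(i). By the prime-order test (Lemma~\ref{lem:prime-order-test}) it suffices to show the following. Let $L'/L$ be cyclic of prime degree $q$, with $L$ finite over $K$ and $\sigma$ a generator of the Galois group. Then $\sigma$ acts nontrivially on $H^1(\AbsGalGrp{L'},\mathbb{Z}/n)$ for some $n$, or equivalently on $\Abelianization{\AbsGalGrp{L'}}/n$. We split into two cases.

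\textbf{Unramified case.} Here $\lambda'/\lambda$ has degree $q$. The inflation $H^1(\AbsGalGrp{\lambda'},\mathbb{Z}/n)\hookrightarrow H^1(\AbsGalGrp{L'},\mathbb{Z}/n)$ is equivariant, and ab-faithfulness of $\AbsGalGrp{\resfield{K}}$ (applied to the pair $\AbsGalGrp{\lambda'}\lhd\AbsGalGrp{\lambda}$) gives a nontrivial action on the source. Hence the action on the target is nontrivial.

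\textbf{Totally ramified case.} Here $L'=L(\pi^{1/q})$ for a uniformizer $\pi$, the residue field $\lambda$ is unchanged, and $\mu_q\subset L$ because $L'/L$ is Galois. Then $\sigma$ acts on $\Abelianization{\AbsGalGrp{L'}}$ by conjugation by an element of $I_L$, and the image of $I_{L'}$ in the abelianization is the coinvariant quotient $\mathbb{Z}_\ell(1)_{\AbsGalGrp{\lambda}}$ for each $\ell$. This quotient is $\mathbb{Z}_\ell/(\chi(g)-1)$, which is finite by (iii). Equivalently, by Kummer theory over $L'$ (using $\mu_{\ell^s}(L')=\mu_{\ell^s}(\lambda)$, which is finite), I would check \eqref{eq:kummer-trivial} fails. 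Take $n=\ell^s$ with $\ell=q$ and $s$ larger than the order of $\mu_{q^\infty}(\lambda)$ plus one, and test $x=\pi'=\pi^{1/q}$. Then $\sigma(x)/x=\zeta_q$, and we need $\zeta_q\notin L'^{\times n}$. Indeed, an $n$-th root of $\zeta_q$ in $L'$ would be a unit root of unity, whose residue lies in $\lambda$ and has order $q^{s+1}$. That is impossible, since $\mu_{q^\infty}(\lambda)$ is finite of smaller order. Here one needs $\mu_n\subset L'$ to apply \eqref{eq:kummer-trivial}. If $\mu_n\not\subset L'$, I would work with $H^1(\AbsGalGrp{L'},\mu_n)\simeq L'^\times/L'^{\times n}$ directly and compare the twisted actions. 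Alternatively, pass to an unramified extension containing $\mu_n$ and use the descent as in \eqref{eq:kummer-descent}, with prime-to-$\ell$ averaging replaced by a careful choice.

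The main obstacle I expect is this last step: obtaining nontriviality on untwisted $\mathbb{Z}/n$-coefficients when $\mu_n\not\subset L'$. The action on $\mu_n$ and the residue field Galois action may interfere there, and this is exactly where finiteness of $\mu_{\ell^\infty}(\lambda)$ for all finite $\lambda$ must be used.
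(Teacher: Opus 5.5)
Your implications (i)$\Rightarrow$(ii) and (ii)$\Rightarrow$(iii) are correct and match the paper, once the abandoned detour in (ii)$\Rightarrow$(iii) is dropped. The unramified case of (iii)$\Rightarrow$(i) is also correct. There you use injective inflation on $H^1$, where the paper uses the dual surjection on abelianizations.

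\textbf{The gap: the totally ramified case.} The problem comes from your choice of exponent. You take $n=q^s$ with $s$ \emph{larger} than the exponent of $\mu_{q^\infty}(\lambda)$. This forces $\mu_n\not\subset L'$, so the criterion \eqref{eq:kummer-trivial} cannot be applied. You then leave the comparison of twisted actions, or the descent, as an unexecuted plan, so nontriviality of $\sigma$ is never proved in this case. Your side remark does not close the gap either: finiteness of the coinvariants $\mathbb{Z}_\ell(1)_{\AbsGalGrp{\lambda}}$ says nothing by itself about whether $\sigma$ acts nontrivially.

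\textbf{The fix.} Choose $s$ to be \emph{maximal} with $\mu_{q^s}\subset L'$.
\begin{itemize}
\item Since $L'/L$ is totally ramified, $L$ and $L'$ have the same residue field $\lambda$. Reduction (injective on roots of unity in residue characteristic zero) and Hensel lifting give $\mu_{q^\infty}(L)=\mu_{q^\infty}(L')\simeq\mu_{q^\infty}(\lambda)$, which is finite by (iii).
\item Also $s\ge1$, because $\mu_q\subset L$.
\item Hence $\mu_{q^s}\subset L$ is fixed by $\sigma$, and \eqref{eq:kummer-trivial} applies over $L'$ with $n=q^s$.
\item The element $\sigma(\pi')/\pi'=\zeta_q$ is not a $q^s$-th power in $L'$. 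A $q^s$-th root of a primitive $q$-th root of unity has order $q^{s+1}$, which contradicts the maximality of $s$.
\end{itemize}
This is exactly the paper's argument. With this choice the obstacle you anticipate never arises, and no twisting or descent is needed.

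A minor point: your assertion $L'=L(\pi^{1/q})$ for a uniformizer $\pi$ of $L$ needs a short Hensel argument. Adjust a uniformizer of $L'$ by a principal unit so that its $q$-th power lies in $L$, which is possible because the residue fields agree and $q$ is a unit.
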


\begin{proof}
	The implication \ref{item:residue-zero-ab-faithful}$\Rightarrow$\ref{item:residue-zero-slim} follows from \cite[Remark~1.2]{Mochizuki2009CommentsStronglyTorsionFree}.
	We prove \ref{item:residue-zero-slim}$\Rightarrow$\ref{item:residue-zero-roots-of-unity}.
	Assume that $\AbsGalGrp{K}$ is slim.
	If $\mu_{\ell^\infty}(\lambda)$ is infinite for some finite extension $\lambda/\resfield{K}$ and some prime $\ell$, then $\lambda$ contains all $\ell$-power roots of unity.
	Let $L/K$ be the finite unramified extension with residue field $\lambda$.
	Unique Hensel lifting gives $\mu_{\ell^\infty}\subset L$.
	Since the residue characteristic is zero, the $\ell$-primary subgroup of the absolute inertia group of $L$ is isomorphic to $\mathbb{Z}_\ell(1)$, with conjugation given by the $\ell$-adic cyclotomic character; see \cite[Chapter~II, Corollary~9.15 and \S9, Exercise~1]{MR1697859}.
	Since the $\ell$-adic cyclotomic character of $\AbsGalGrp{L}$ is trivial, this nonzero inertia subgroup is contained in $Z(\AbsGalGrp{L})$.
	This contradicts the center-freeness of the open subgroup $\AbsGalGrp{L}$ of the slim group $\AbsGalGrp{K}$.

	It remains to prove \ref{item:residue-zero-roots-of-unity}$\Rightarrow$\ref{item:residue-zero-ab-faithful}.
	Assume that $\mu_{\ell^\infty}(\lambda)$ is finite for every finite extension $\lambda/\resfield{K}$ and every prime $\ell$.
	The assumptions hold for every finite extension of $K$: the residue extension is finite, open subgroups of an ab-faithful group are ab-faithful, and the finiteness assumption holds after further finite extensions.
	By Lemma~\ref{lem:prime-order-test} with $\Sigma$ the set of all primes, it suffices to show that the conjugation action of $\Gal(L/K)$ on $\Abelianization{\AbsGalGrp{L}}$ is nontrivial for every cyclic extension $L/K$ of prime degree $\ell$.
	Since $K$ is Henselian and $\operatorname{char}\resfield{K}=0$, the fundamental identity \cite[Chapter~II, Proposition~6.8]{MR1697859} gives
	\begin{equation*}
		[L:K]=e(L/K)[\resfield{L}:\resfield{K}]=\ell.
	\end{equation*}
	Thus $L/K$ is either unramified or totally ramified.
	In the unramified case, reduction identifies $\Gal(L/K)$ with $\Gal(\resfield{L}/\resfield{K})$.
	Under this identification, the residue Galois quotient induces an equivariant surjection
	\begin{equation*}
		\Abelianization{\AbsGalGrp{L}}\twoheadrightarrow \Abelianization{\AbsGalGrp{\resfield{L}}}
	\end{equation*}
	by \cite[Chapter~II, Propositions~9.9 and~9.11]{MR1697859}.
	The action on the target is nontrivial by the ab-faithfulness of $\AbsGalGrp{\resfield{K}}$.
	Therefore, the action on $\Abelianization{\AbsGalGrp{L}}$ is also nontrivial.

	Finally, we consider the totally ramified case, that is, $\resfield{L}=\resfield{K}$ and $v_L(K^\times)=\ell\mathbb{Z}$.
	Let $\pi$ be a uniformizer of $L$ and choose $a\in K$ with $v_L(a)=\ell$.
	Since the residue fields are equal, multiplying $a$ by a unit of $K$ makes $a/\pi^\ell\in U_L$.
	Since the residue characteristic is zero, $\ell$ is a unit in $\mathcal{O}_L$.
	Applying Hensel's lemma \cite[Section~12.5, (12.14)]{MR4647281} to $X^\ell-a/\pi^\ell$ at $1$ gives $u\in U_L$ with $u^\ell=a/\pi^\ell$.
	Replacing $\pi$ by $u\pi$, we may assume that $\pi^\ell=a\in K$.
	Let $\sigma$ generate $\Gal(L/K)$.
	Since $\pi^\ell\in K$ and $\pi\notin K$, the element $\sigma(\pi)/\pi$ is a primitive $\ell$-th root of unity.
	Reduction and unique Hensel lifting identify
	\begin{equation*}
		\mu_{\ell^\infty}(K)=\mu_{\ell^\infty}(L)
		\xrightarrow{\sim}\mu_{\ell^\infty}(\resfield{K}).
	\end{equation*}
	These groups are finite by assumption.
	Choose $s\ge1$ maximal with $\mu_{\ell^s}\subset L$.
	The element $\sigma(\pi)/\pi$ is not an $\ell^s$-th power in $L$, since any such root would have order $\ell^{s+1}$.
	Since $\sigma$ fixes $\mu_{\ell^s}\subset K$, the Kummer criterion~\eqref{eq:kummer-trivial} gives a nontrivial action on $H^1(\AbsGalGrp{L},\mathbb{Z}/\ell^s\mathbb{Z})$.
	By~\eqref{eq:finite-cohomology-duality}, the action on $\Abelianization{\AbsGalGrp{L}}$ is nontrivial.
\end{proof}

\begin{theorem}
	\label{thm:main-hlf}
	Let $K$ be a higher local field of positive dimension.
	Then $\AbsGalGrp{K}$ is ab-faithful.
\end{theorem}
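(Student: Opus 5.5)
We argue by induction on the dimension $n\ge1$, splitting according to the characteristic of $K=K_n$ and of its residue field $K_{n-1}$.

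\emph{Positive characteristic.} If $\operatorname{char}K=p>0$, then $K$ is complete discretely valued with value group $\mathbb{Z}$, which is not $p$-divisible. Example~\ref{Iexampleemb}\ref{item:emb-positive-characteristic} applies with $\ell=p$, so $\AbsGalGrp{K}$ is $p$-ab-faithful. By Remark~\ref{prop:sigma-ab-faithful-properties}, it is therefore ab-faithful.

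\emph{Mixed characteristic.} If $\operatorname{char}K=0$ and $\operatorname{char}K_{n-1}=p>0$, then $K$ is a discretely valued field of mixed characteristic $(0,p)$. Theorem~\ref{thm:mixed-characteristic} gives $p$-ab-faithfulness, and Remark~\ref{prop:sigma-ab-faithful-properties} again upgrades this to ab-faithfulness. These two cases cover $n=1$, since $K_0$ is finite of positive characteristic.

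\emph{Residue characteristic zero.} The remaining case is $\operatorname{char}K_{n-1}=0$. Then $n\ge2$, because $K_0$ has positive characteristic, and $K_{n-1}$ is a higher local field of dimension $n-1\ge1$. By the induction hypothesis, $\AbsGalGrp{K_{n-1}}$ is ab-faithful. Since $K$ is complete, it is Henselian, so Lemma~\ref{lem:residue-zero-lift} applies. It reduces the claim to condition~\ref{item:residue-zero-roots-of-unity}: for every finite extension $\lambda/K_{n-1}$ and every prime $\ell$, the group $\mu_{\ell^\infty}(\lambda)$ must be finite.

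\emph{Main obstacle: roots of unity.} To verify this condition, we first note that a finite extension $\lambda$ of $K_{n-1}$ is again a higher local field of dimension $n-1$. Its residue tower consists of finite extensions of the $K_i$, each complete discretely valued, by the facts recalled at the start of Section~\ref{sec:valued-fields}. Descending the tower of $\lambda$, we reach the first level $\lambda_j$ with $\operatorname{char}\lambda_j=0$ and $\operatorname{char}\lambda_{j-1}=p>0$; it exists because $\lambda_0$ is finite.
\begin{itemize}
\item For $\ell=p$, Lemma~\ref{lem:roots-of-unity} shows that $\mu_{p^\infty}(\lambda_j)$ is finite.
\item For $\ell\ne p$, reduction maps $\mu_{\ell^\infty}(\lambda_j)$ injectively into $\mu_{\ell^\infty}(\lambda_{j-1})$, because distinct $\ell$-power roots of unity have distinct residues when $\ell$ is invertible in the residue field. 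The latter group is finite: it injects in turn down the positive-characteristic part of the tower and finally into the finite field $\lambda_0$.
\end{itemize}
Finally, we pass up from $\lambda_j$ to $\lambda$. At every level above $j$ the residue characteristic is zero, so reduction is injective on all roots of unity, by the same distinct-residues argument (or by unique Hensel lifting). Hence $\mu_{\ell^\infty}(\lambda)$ injects into $\mu_{\ell^\infty}(\lambda_j)$, which is finite. This verifies condition~\ref{item:residue-zero-roots-of-unity}, and Lemma~\ref{lem:residue-zero-lift} then gives ab-faithfulness of $\AbsGalGrp{K}$.
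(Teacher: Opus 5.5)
Your proposal is correct and follows the paper's proof essentially step for step. You use the same case split: Example~\ref{Iexampleemb}\ref{item:emb-positive-characteristic} in positive characteristic, Theorem~\ref{thm:mixed-characteristic} in mixed characteristic, and Lemma~\ref{lem:residue-zero-lift} with the induction hypothesis when the residue characteristic is zero; finiteness of $\mu_{\ell^\infty}$ comes from Lemma~\ref{lem:roots-of-unity} at the mixed-characteristic level and from injectivity of reduction elsewhere, and the only difference is cosmetic: you unroll that finiteness argument explicitly along the residue tower of $\lambda$, whereas the paper packages it as a separate induction on the dimension.
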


\begin{proof}
	Finite extensions of a higher local field are again higher local fields of the same dimension.
	We first prove that $\mu_{\ell^\infty}(K)$ is finite for every prime $\ell$ by induction on the dimension $n$ of $K$.
	The case $n=0$ is clear since $K$ is finite.
	Assume that $n\ge1$.
	If $\ell\ne\operatorname{char}\resfield{K}$, reduction induces an injection
	\begin{equation*}
		\mu_{\ell^\infty}(K)\hookrightarrow\mu_{\ell^\infty}(\resfield{K}).
	\end{equation*}
	Indeed, if $\zeta\in U_K$ satisfies $\zeta^{\ell^r}=1$ for some $r\ge1$, then
	\begin{equation*}
		0=(\zeta-1)(1+\zeta+\cdots+\zeta^{\ell^r-1}).
	\end{equation*}
	The second factor has nonzero residue $\ell^r$, hence is a unit, and therefore $\zeta=1$.
	Finiteness follows by induction.
	If $\ell=\operatorname{char}\resfield{K}>0$, the group $\mu_{\ell^\infty}(K)$ is trivial when $\operatorname{char}K=\ell$, and is finite by Lemma~\ref{lem:roots-of-unity} when $\operatorname{char}K=0$.
	Thus $\mu_{\ell^\infty}(K)$ is finite for every prime $\ell$.

	Next, we prove by induction on the dimension of $K$ that $\AbsGalGrp{K}$ is ab-faithful.
	This follows from Example~\ref{Iexampleemb}\ref{item:emb-positive-characteristic} if $\operatorname{char}K>0$, and from Theorem~\ref{thm:mixed-characteristic} if $K$ has mixed characteristic.
	In the remaining case $\operatorname{char}\resfield{K}=0$, we have $n\ge2$, and $\AbsGalGrp{\resfield{K}}$ is ab-faithful by induction.
	Every finite extension of $\resfield{K}$ is a higher local field of dimension $n-1$.
	By induction, its group of $\ell$-power roots of unity is finite for every prime $\ell$.
	Hence the assertion follows from Lemma~\ref{lem:residue-zero-lift}.
\end{proof}

\section{Finite-step slimness and finite split embedding problems}
\label{sec:split-embedding-problems-slim}

Continuing Section~\ref{sec:split-embedding-problems}, we give a sufficient condition for slimness of finite-step solvable quotients in terms of finite split embedding problems with abelian kernel.
For a profinite group $G$, recall that
\begin{equation*}
	G^{[0]}\coloneq G,\qquad G^{[r+1]}\coloneq [G^{[r]},G^{[r]}],\qquad G^m\coloneq G/G^{[m]},
\end{equation*}
where all commutator subgroups are closed.
The following lemma gives the finite groups used in the proof of Proposition~\ref{prop:abelian-split-solvable-slim}.

\begin{lemma}
	\label{lem:faithful-abelian-extension-solvable-length}
	Let $m\ge2$ be an integer and let $A$ be a finite group.
	Assume that $A^{[m]}=1$.
	Then there exists a finite abelian group $C$ with a faithful $A$-action such that $(C\rtimes A)^{[m]}=1$.
\end{lemma}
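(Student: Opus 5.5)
Given a finite group $A$ with $A^{[m]}=1$ and $m\ge2$, we will construct $C$ as a permutation-type module induced from the abelian quotient of a suitable subgroup. Since $A^{[m]}=1$, the derived series stops inside $A^{[m-1]}$, which is abelian. The natural choice is therefore a module that is trivial, or at least abelian-acting, on a large enough normal piece. Faithfulness will come from many such modules.

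\emph{Plan.} For each nontrivial $a\in A$ we find a finite abelian group $C_a$ with an $A$-action on which $a$ acts nontrivially and with $(C_a\rtimes A)^{[m]}=1$. Then we put $C=\prod_a C_a$. This works because $C\rtimes A$ embeds into $\prod_a (C_a\rtimes A)$ via $(c,x)\mapsto((c_a,x))_a$, and derived subgroups are compatible with subgroups and products, so $(C\rtimes A)^{[m]}=1$. The action of $A$ on $C$ is faithful by construction.

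\emph{Constructing $C_a$.} We look for a normal subgroup $N$ with $a\notin N$ and $N\supseteq A^{[m-1]}$. With this choice, $C_a=\mathbb{Z}/2\mathbb{Z}[A/N]$, with the regular action of $A/N$ inflated to $A$, does the job. First, $a$ moves the basis vector $[N]$, so it acts nontrivially. Second, $C_a\rtimes A$ is a quotient of $C_a\rtimes (A/N)$ extended by $N$. More precisely, $(C_a\rtimes A)^{[k]}\subseteq C_a\cdot A^{[k]}$ for every $k$. Hence $(C_a\rtimes A)^{[m-1]}\subseteq C_a\rtimes A^{[m-1]}\subseteq C_a\rtimes N$. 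Since $N$ acts trivially on $C_a$, the group $C_a\rtimes N=C_a\times N$ is abelian provided $N$ is abelian. Then $(C_a\rtimes A)^{[m]}=1$.

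\emph{Choosing $N$.} So we need $N$ normal and abelian with $A^{[m-1]}\subseteq N$ and $a\notin N$. If $a\notin A^{[m-1]}$, take $N=A^{[m-1]}$, which is abelian because $A^{[m]}=1$. The main obstacle is the case $1\ne a\in A^{[m-1]}$, where $a$ acts trivially on every module with $N\supseteq A^{[m-1]}$. In this case we change the construction. Here $a$ lies in the abelian normal subgroup $B=A^{[m-1]}$. We pick a character $\psi\colon B\to\mathbb{Q}/\mathbb{Z}$ with $\psi(a)\ne0$, using a finite cyclic target $\mathbb{Z}/n$ with $n=\exp B$. We then let $C_a=\operatorname{Ind}_B^A\mathbb{Z}/n(\psi)$, which is a finite abelian group. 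Its restriction to $B$ is a sum of conjugates of $\psi$, and $a$ acts nontrivially on the summand $\psi$.

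\emph{Checking the length bound in the obstacle case.} Now $(C_a\rtimes A)^{[m-1]}\subseteq C_a\rtimes B$, which is metabelian rather than abelian, so one more step is needed. We therefore check directly that $[C_a\rtimes B,\,C_a\rtimes B]$ meets the bound. Since $(C_a\rtimes A)^{[m-1]}$ is generated by commutators at depth $m-1$, I expect it actually lies in $C_a^{\,\prime}\rtimes B$ with $C_a^{\,\prime}=I_A^{m-1}C_a$-type terms. The cleanest fix is to use the finer inclusion $(C\rtimes A)^{[k]}\subseteq I_{A^{[k-1]}}C\rtimes A^{[k]}$, where $I$ is the augmentation ideal. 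With it, $(C\rtimes A)^{[m]}\subseteq I_B C\rtimes 1$, and we need this term to vanish, which fails if $B$ acts nontrivially.

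For this reason I would instead lower the demand to $m\ge2$ by induction on $m$. We apply the first construction with $N=A^{[m-1]}$ for elements outside $N$. For $a\in A^{[m-1]}$, we use the induced module and verify by the augmentation-ideal filtration that $(C_a\rtimes A)^{[m]}\subseteq I_{A^{[m-1]}}\cdots$ vanishes after adjusting $\psi$ to have values in $\mathbb{Z}/n$ with a trivial-on-commutator bookkeeping. This verification is the step I expect to need the most care.
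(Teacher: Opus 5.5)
Your reduction to a product $C=\prod_a C_a$ is fine, and so is the case $a\notin A^{[m-1]}$, where you use the regular module of $A/A^{[m-1]}$. The gap is the case $1\ne a\in B\coloneq A^{[m-1]}$, which is where all the content of the lemma sits. Your last paragraph gives no argument there, and the induced module you propose cannot work.

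For an abelian normal subgroup $D$ with complement $E$, one has $[D\rtimes E,D\rtimes E]=I_ED\rtimes[E,E]$, where $I_ED$ is the subgroup generated by the elements $ex-x$. Iterating this gives the exact formula
\begin{equation*}
(C\rtimes A)^{[m]}=\bigl(I_{A^{[m-1]}}\cdots I_{A^{[1]}}I_A C\bigr)\rtimes A^{[m]}.
\end{equation*}
Now take $A=S_3$, $m=2$, $a$ a $3$-cycle and $t$ a transposition. Let $C=\operatorname{Ind}_{A_3}^{S_3}(Re_1)$ with $R=\mathbb{Z}/n\mathbb{Z}$ and $ae_1=ue_1$ for a unit $u\ne1$. (An additive character $\psi\colon B\to\mathbb{Q}/\mathbb{Z}$ does not by itself give such an action. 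You need $\psi\colon B\to(\mathbb{Z}/n\mathbb{Z})^\times$, and for $n=3$, the exponent of $B$, there is no nontrivial one.) With $e_2=te_1$ we get $ae_2=u^2e_2$. Hence $e_2-e_1=(t-1)e_1\in I_{S_3}C$ and
\begin{equation*}
(a-1)(e_2-e_1)=(u^2-1)e_2-(u-1)e_1\ne0.
\end{equation*}
So $(C\rtimes S_3)^{[2]}\ne1$ for every such choice of $n$ and $u$.

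Your diagnosis ``this fails if $B$ acts nontrivially'' is also mistaken. The inclusion into $I_BC\rtimes1$ is only an upper bound. Faithfulness forces $B$ to act nontrivially whenever $B\ne1$, so reading the bound as an obstruction would contradict the lemma itself.

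\textbf{Missing idea.} Let $B$ act unipotently instead of through characters. Find an $A$-stable $W\subset C$ such that $A$ acts trivially on $C/W$ and $B$ acts trivially on $W$. Then $I_AC\subseteq W$ and $I_BW=0$, so the formula above gives $(C\rtimes A)^{[m]}=1$. Alternatively, $(C\rtimes A)/W\simeq(C/W)\times A$ gives $(C\rtimes A)^{[m-1]}\subseteq W\rtimes B$, which is abelian.

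\textbf{The paper's construction.} Put $M\coloneq B\times\mathbb{Z}/2\mathbb{Z}$ and $W\coloneq\operatorname{Map}(A/B,M)$. Choose coset representatives $s$, with $1$ representing $B$, and define $f(sd)\coloneq d$. Set $C\coloneq W+\mathbb{Z}f$. Then:
\begin{itemize}
\item since $B$ is abelian, $a\cdot f-f$ is constant on $B$-cosets, so it lies in $W$;
\item the kernel of the action on $W$ is exactly $B$;
\item each $1\ne d\in B$ moves $f$, because $(d\cdot f)(1)=f(d^{-1})\ne0=f(1)$.
\end{itemize}
Using this module as $C_a$ repairs your argument. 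In fact it is already faithful by itself, so the product over $a$ is unnecessary.
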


\begin{proof}
	It suffices to construct a finite abelian group $C$ with a faithful $A$-action and an $A$-stable subgroup $W\subset C$ such that $A^{[m-1]}$ acts trivially on $W$ and $A$ acts trivially on $C/W$.
	Indeed, put $B\coloneq C\rtimes A$.
	Since $A$ acts trivially on $C/W$, we have $B/W\simeq(C/W)\times A$.
	Since $C/W$ is abelian and $m\ge2$, we have $(B/W)^{[m-1]}=\{0\}\times A^{[m-1]}$ under this identification.
	Taking the inverse image in $B$, we obtain
	\begin{equation*}
		B^{[m-1]}\subseteq W\rtimes A^{[m-1]}.
	\end{equation*}
	The group on the right is abelian: both factors are abelian, and $A^{[m-1]}$ acts trivially on $W$.
	Thus $B^{[m]}=1$.

	Next, we construct $C$ and $W\subset C$ with these properties.
	Put $M\coloneq A^{[m-1]}\times\mathbb{Z}/2\mathbb{Z}$.
	This is a nontrivial finite abelian group even when $A^{[m-1]}=1$.
	We write $M$ additively and identify $A^{[m-1]}$ with its first factor.
	Let $A$ act on $\operatorname{Map}(A,M)$ by $(a\cdot h)(x)\coloneq h(a^{-1}x)$ for every $a,x\in A$ and $h\in\operatorname{Map}(A,M)$.
	Via pullback along $A\to A/A^{[m-1]}$, we regard
	\begin{equation*}
		W\coloneq\operatorname{Map}(A/A^{[m-1]},M)
	\end{equation*}
	as an $A$-stable subgroup of $\operatorname{Map}(A,M)$.
	Since $M\ne0$, the action of $A/A^{[m-1]}$ on $W$ is faithful, and the kernel of the $A$-action on $W$ is exactly $A^{[m-1]}$.
	Choose a representative $s$ for each coset $sA^{[m-1]}$, with $1$ representing $A^{[m-1]}$.
	Then each element of $A$ has a unique expression $sd$ with $d\in A^{[m-1]}$.
	Define $f:A\to M$ by $f(sd)=d$, and put
	\begin{equation*}
		C\coloneq W+\mathbb{Z}f\subseteq\operatorname{Map}(A,M).
	\end{equation*}
	This is a finite abelian group.
	It remains to show that $C$ is stable under $A$, that $A$ acts trivially on $C/W$, and that $A$ acts faithfully on $C$.

	Since $A^{[m-1]}$ is abelian, for every $a,x\in A$ and $d\in A^{[m-1]}$, we have
	\begin{equation*}
		(a\cdot f-f)(xd)
		=f(a^{-1}x)+d-f(x)-d
		=(a\cdot f-f)(x).
	\end{equation*}
	Therefore $a\cdot f-f$ is constant on each coset of $A^{[m-1]}$, hence belongs to $W$.
	It follows that $a\cdot f\in f+W\subseteq C$, and therefore $C$ is $A$-stable.
	Moreover, $C/W$ is generated by $f+W$, which is fixed by every $a\in A$.
	Hence $A$ acts trivially on $C/W$.
	Finally, we show that the $A$-action on $C$ is faithful.
	If $a\in A$ acts trivially on $C$, then it acts trivially on $W$, hence $a\in A^{[m-1]}$.
	It also fixes $f$, and evaluation at $1$ gives $f(a^{-1})=f(1)=0$.
	Our choice of the representative $1$ implies that $f$ restricts to the inclusion $A^{[m-1]}\hookrightarrow M$.
	Thus $f(a^{-1})=0$ implies $a=1$.
	This proves faithfulness and completes the proof.
\end{proof}

\begin{proposition}
	\label{prop:abelian-split-solvable-slim}
	Let $G$ be a profinite group.
	Assume that every finite split embedding problem for $G$ with abelian kernel has a proper solution.
	Then $G^m$ is slim for every $m\ge2$.
\end{proposition}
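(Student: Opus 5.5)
Fix $m\ge2$ and put $\Gamma\coloneq G^m$. We must show that for every open subgroup $U$ of $\Gamma$ the centralizer $Z_\Gamma(U)$ is trivial. Suppose $z\in\Gamma$ is a nontrivial element centralizing an open subgroup $U$. Since $\Gamma$ is profinite, we choose an open normal subgroup $V$ of $G$ containing $G^{[m]}$ such that:
\begin{itemize}
\item the image of $z$ in $A\coloneq G/V$ is nontrivial;
\item the preimage of $U$ in $G$ contains $V$.
\end{itemize}
Then $A$ is a finite quotient of $\Gamma$, so $A^{[m]}=1$.

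The plan is to apply Lemma~\ref{lem:faithful-abelian-extension-solvable-length} to obtain a finite abelian group $C$ with faithful $A$-action and $(C\rtimes A)^{[m]}=1$. We then apply the hypothesis to the split embedding problem $C\rtimes A\twoheadrightarrow A$ over $G\twoheadrightarrow A$. This yields a surjection $\theta\colon G\twoheadrightarrow C\rtimes A$ lifting $G\to A$. Since $(C\rtimes A)^{[m]}=1$, the map $\theta$ factors through $\Gamma$, giving a surjection $\bar\theta\colon\Gamma\twoheadrightarrow C\rtimes A$.

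Next we exploit the centralizing relation. The image $\bar\theta(U)$ is a subgroup of $C\rtimes A$ mapping into $U/V\subseteq A$. The key point is that $\bar\theta(U)$ contains $C$: indeed $\bar\theta(V/G^{[m]})$ maps onto $C\rtimes 1$, because $\theta(V)=\theta(G)\cap(C\rtimes\{1\})$-preimage $=C$ (as $\theta$ is surjective and $V$ is the full preimage of $1\in A$). Hence $\bar\theta(z)$ commutes with every element of $C$. But $\bar\theta(z)=(c,a)$ with $a$ the image of $z$ in $A$, which is nontrivial. Conjugation by $(c,a)$ on the abelian normal subgroup $C$ is just the action of $a$, since $C$ is abelian. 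Faithfulness of the $A$-action on $C$ then gives a contradiction, because $a\ne1$ acts nontrivially on $C$.

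The only delicate step is the group-theoretic construction of $C$ keeping the derived length at most $m$ while making the action faithful. This is exactly Lemma~\ref{lem:faithful-abelian-extension-solvable-length}, so what remains is bookkeeping:
\begin{itemize}
\item the choice of $V$ both separating $z$ and lying inside the preimage of $U$;
\item the identification $\theta(V)=C$.
\end{itemize}
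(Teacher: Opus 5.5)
Your proof is correct and follows essentially the same route as the paper. You shrink to an open normal subgroup of $G^m$ that lies inside the given open subgroup and excludes $z$, apply Lemma~\ref{lem:faithful-abelian-extension-solvable-length} to the finite quotient, solve the split embedding problem, and use that the kernel maps onto $C$, so that faithfulness of the action contradicts $z$ centralizing it. Your identification $\theta(V)=C$ (cleanly: $V=\theta^{-1}(C)$, hence $\theta(V)=C$ by surjectivity) is exactly the surjectivity of the left vertical map in the paper's diagram.
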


\begin{proof}
	Fix $m\ge2$, let $H$ be an open subgroup of $G^m$, and assume that $1\ne z\in\operatorname{C}_{G^m}(H)$.
	Replacing $H$ by a smaller open normal subgroup of $G^m$, we may assume that $z\notin H$.
	By Lemma~\ref{lem:faithful-abelian-extension-solvable-length}, there exists a finite abelian group $C$ with a faithful $G^m/H$-action such that $B\coloneq C\rtimes(G^m/H)$ satisfies $B^{[m]}=1$.
	The hypothesis gives a surjection $G\twoheadrightarrow B$ over the natural map $G\twoheadrightarrow G^m/H$.
	Since $B^{[m]}=1$, this surjection factors through $G^m$ and gives a commutative diagram with exact rows and surjective vertical maps:
	\begin{equation*}
		\begin{array}{ccccccccc}
			1 & \to & H              & \to & G^m            & \to & G^m/H & \to & 1 \\
			  &     & \big\downarrow &     & \big\downarrow &     & \Vert           \\
			1 & \to & C              & \to & B              & \to & G^m/H & \to & 1
		\end{array}.
	\end{equation*}
	Since $z$ centralizes $H$, its image in $B$ centralizes $C$.
	Thus $zH\in G^m/H$ acts trivially on $C$, and faithfulness gives $zH=1$, contrary to $z\notin H$.
\end{proof}

If $K$ is a Hilbertian field, then every finite split embedding problem for $\AbsGalGrp{K}$ with abelian kernel has a proper solution by Ikeda's theorem \cite{MR121364} (see also~\cite[Proposition~18.4.5]{MR4647281}).
Hence Proposition~\ref{prop:abelian-split-solvable-slim} shows that $\AbsGalGrp{K}^m$ is slim for every $m\ge2$.

\section{Finite-step slimness and formal Hilbert~90}
\label{sec:solvable-centers}

We use formal Hilbert~90 and ab-faithfulness to give criteria for center-freeness and slimness.
For an abelian profinite group $B$, write $B_\ell$ for its $\ell$-primary component.
We write abelian profinite groups additively, but roots of unity multiplicatively.
For a set of primes $\Sigma$ and a profinite group $D$, put
\begin{equation*}
	R_\Sigma(D)\coloneq\ker\bigl(D\to\Abelianization{(D^\Sigma)}\bigr).
\end{equation*}
The universal properties of abelianization and maximal pro-$\Sigma$ quotients give a canonical isomorphism
\begin{equation}
	\label{eq:cf-sigma-primary}
	D/R_\Sigma(D)\simeq\Abelianization{(D^\Sigma)}\simeq\prod_{\ell\in\Sigma}(\Abelianization{D})_\ell.
\end{equation}
In particular, the subgroup $R_\Sigma(D)$ is characteristic in $D$.

\begin{definition}[{\cite[Section~2.2]{MR4880790}}]
	For a prime $\ell$, recall that an \emph{$\ell$-orientation} of a profinite group $G$ is a continuous homomorphism $\theta:G\to\mathbb{Z}_\ell^\times$.
	Write $T$ for $\mathbb{Z}_\ell$ with the action $g\cdot t=\theta(g)t$.
	The pair $(G,\theta)$ satisfies \emph{formal Hilbert~90} if reduction induces a surjection
	\begin{equation*}
		H^1(H,T/\ell^nT)\to H^1(H,T/\ell T)
	\end{equation*}
	for every open subgroup $H$ of $G$ and every $n\ge1$.
\end{definition}

By \cite[Lemma~2.3]{MR4880790}, this condition implies that reduction induces a surjection
\begin{equation}
	\label{eq:cf-cohomology-lifts}
	H^1(H,T/\ell^nT)\to H^1(H,T/\ell^mT)
\end{equation}
for every open subgroup $H$ of $G$ and every $n\ge m\ge1$.
We first prove a common criterion for the vanishing of invariant subgroups.

\begin{lemma}
	\label{lem:cf-integral-invariants}
	Let $G$ be a profinite group, let $\ell$ be a prime, and let $T$ be a free $\mathbb{Z}_\ell$-module of rank one with its $\ell$-adic topology and a continuous $G$-action given by a character $\theta:G\to\mathbb{Z}_\ell^\times$.
	Assume that $(G,\theta)$ satisfies formal Hilbert~90.
	Let $D$ be a closed normal subgroup of $G$ such that $D\subseteq G^{[1]}$, and put $A\coloneq(\Abelianization{D})_\ell$.
	Then the following hold:
	\begin{enumerate}[(1)]
		\item\label{item:cf-integral-global} We have $A^G=0$.
		\item\label{item:cf-integral-open} If $\theta(G)$ is infinite, then $A^H=0$ for every open subgroup $H$ of $G$.
	\end{enumerate}
\end{lemma}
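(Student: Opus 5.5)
The plan is to argue by contradiction and pass to a finite quotient in which the hypothesised invariant element becomes a central element of order $\ell$ in a derived subgroup. I then want to play this off against the lifting property~\eqref{eq:cf-cohomology-lifts}.

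First I record the basic compatibility between $\theta$ and $D$. Since $\mathbb{Z}_\ell^\times$ is abelian and $D\subseteq G^{[1]}$, we have $\theta(D)=1$. Hence, for every $n$, a crossed homomorphism $c\colon G\to T/\ell^nT$ restricts to an honest homomorphism $D\to\mathbb{Z}/\ell^n\mathbb{Z}$. The cocycle identity $c(g)+\theta(g)c(g^{-1})=0$ gives
\begin{equation*}
	c(gdg^{-1})=\theta(g)c(d).
\end{equation*}
Thus restriction yields a map
\begin{equation*}
	H^1(G,T/\ell^nT)\to\Hom_G(A,T/\ell^nT),
\end{equation*}
which is the edge map of the inflation--restriction sequence for $1\to D\to G\to G/D\to1$. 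Its cokernel injects into $H^2(G/D,T/\ell^nT)$ through the transgression.

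For part~\ref{item:cf-integral-global}, suppose $0\ne a\in A^G$. Choose a closed $G$-stable subgroup $B\subseteq A$ of finite index such that the image $\bar a$ of $a$ in $A/B$ has order exactly $\ell$. Let $\widetilde D$ be the inverse image of $B$ in $D$. Then $E\coloneq G/\widetilde D$ is an extension of $G/D$ by the finite abelian group $A/B$, and $\langle\bar a\rangle\simeq\mathbb{Z}/\ell\mathbb{Z}$ is central in $E$. Since $a$ comes from $D\subseteq G^{[1]}$, the element $\bar a$ lies in $[E,E]$.

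The goal is a contradiction with~\eqref{eq:cf-cohomology-lifts} for $n\gg m$. I would use the five-term exact sequences for $G\to E\to E/\langle\bar a\rangle$. Because $\bar a\in[E,E]$, every homomorphism $E\to\mathbb{Z}/\ell$ vanishes on $\bar a$, while the nonsplit central extension class of $\langle\bar a\rangle$ is detected in $H^2$. I would compare the $\theta$-twisted coefficients $T/\ell^nT$ with the trivial coefficients $\mathbb{Z}/\ell$ via the reduction map, using that $\theta\equiv1$ on a suitable open subgroup modulo $\ell$. Formal Hilbert~90 makes the Bockstein $H^1(\cdot,T/\ell^mT)\to H^2(\cdot,T/\ell T)$ vanish on every open subgroup. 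Pushing this through the transgression should show that the class cutting out $\bar a$ is zero, which contradicts the choice of $\bar a$.

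I expect this step to be the main obstacle: turning ``fixed element in the derived subgroup'' into a nonvanishing Bockstein. I would first try it when $\theta$ is trivial modulo $\ell$, where it is the classical statement that $H^1$ surjectivity kills central $\ell$-torsion in $[E,E]$. The general case would then follow by restricting to $\ker(\theta\bmod\ell)$ and taking $\chi$-eigencomponents under a prime-to-$\ell$ group, as in~\eqref{eq:chi-projector}.

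For part~\ref{item:cf-integral-open}, let $H$ be open; replacing it by a smaller open subgroup, I may assume $H$ is normal in $G$. I cannot apply part~\ref{item:cf-integral-global} to $(H,D)$ directly, since $D\not\subseteq H^{[1]}$ in general. Instead, the nonzero pro-$\ell$ group $A^H$ carries an action of the finite group $G/H$. I would use the infinite image of $\theta$ to choose $h\in H$ with $\theta(h)\ne1$, and work on the $\ell$-primary component of $A^H$ with the twisted action $\theta^{-1}(g)\,g$ for $g\in G$. The infiniteness of $\theta(G)$ should guarantee that this twist is not absorbed by the finite quotient, so that a nonzero $G$-invariant element is forced to exist in a quotient of $A$. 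That contradicts part~\ref{item:cf-integral-global}, applied after replacing $D$ by $D\cap H$; this replacement is still normal in $G$ and contained in $G^{[1]}$.
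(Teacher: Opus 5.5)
There is a genuine gap, located exactly at the step you flag as the main obstacle, and it cannot be closed along the lines you suggest. After passing to $E=G/\widetilde D$, your argument only retains four facts: $\bar a$ has order $\ell$, it is central in $E$, it lies in the image of $D\subseteq[E,E]$, and $G$ satisfies formal Hilbert~90. These facts are not contradictory.

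Let $G$ be free pro-$\ell$ on $x,y$. It is ab-torsion-free, so $\theta=1$ satisfies formal Hilbert~90 and all Bocksteins vanish on every open subgroup. Take $D=[G,G]$, and let $\widetilde D$ be the preimage of $\ell\mathbb{Z}_\ell$ under $D\to D/[G,D]\simeq\mathbb{Z}_\ell$. Then $[D,D]\subseteq\widetilde D$ and $D/\widetilde D\simeq\mathbb{Z}/\ell$. The image of $[x,y]$ is central of order $\ell$ in $[E,E]$. Since $E$ is nonabelian, the class of $1\to\langle\bar a\rangle\to E\to\mathbb{Z}_\ell^2\to1$ is nonzero. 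So the ``classical statement'' you want to invoke is false, and no Bockstein or transgression argument at a single finite level can succeed. The invariance of $a$ must be used at all $\ell$-adic levels at once, together with the torsion-freeness of $T$.

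The paper does this as follows. Fix $z\ne0$ in $A$ and $f_m\colon A\to T/\ell^mT$ with $f_m(z)\ne0$. Choose an open normal $H_1$ with $\theta(H_1)\equiv1\bmod\ell^m$ and with $D\cap H_1$ killing the composite $D\to A\to T/\ell^mT$, and put $H_0=DH_1$. Since $H_0/H_1\simeq D/(D\cap H_1)$, the homomorphism extends to $H_0$ with no transgression obstruction: one shrinks to an open subgroup instead of extending to $G$. Formal Hilbert~90 on $H_0$ then gives compatible lifts. Their restrictions to $D$ assemble into an integral $f\colon A\to T$ with $f(z)\ne0$ and $f(hah^{-1})=\theta(h)f(a)$ for $h\in H_0$. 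If $z\in A^G$, this forces $\theta|_{H_0}=1$, so the lifts form a homomorphism $c\colon H_0\to T$. The transfer $\Abelianization{G}\to\Abelianization{H_0}$ kills the image of $D\subseteq G^{[1]}$, but $c$ sends the transfer of $z$ to $[G:H_0]f(z)\ne0$. At a finite level, $[G:H_0]f_n(z)$ may vanish; this is precisely the information your reduction discards.

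Your part (2) has no working mechanism either. The twist by $\theta^{-1}(g)g$ is not tied to any construction, and its intended conclusion would not help. A nonzero $G$-invariant element of a quotient of $A$ does not contradict part (1): in the example above, the coinvariants $A_G=D/[G,D]\simeq\mathbb{Z}_\ell$ are nonzero with trivial action, although $A^G=0$. Moreover, part (1) for $D\cap H$ concerns the different module $(\Abelianization{(D\cap H)})_\ell$. In the paper, (2) is immediate from the same $f$. The equivariance shows that every element of $H_0$ fixing $z$ lies in $\ker\theta$, so $H_0\cap H\subseteq\ker\theta$. Then $\ker\theta$ would be open, contradicting the infinitude of $\theta(G)$.
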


\begin{proof}
	Let $z\in A$ be nonzero.
	Choose an integer $m\ge1$ and a continuous homomorphism $f_m:A\to T/\ell^mT$ such that $f_m(z)\ne0$.
	Since $D\subseteq G^{[1]}$, we have $\theta(D)=1$.
	Choose an open normal subgroup $H_1$ of $G$ such that $\theta(H_1)$ is trivial modulo $\ell^m$ and $D\cap H_1$ lies in the kernel of the composite $D\to A\xrightarrow{f_m}T/\ell^mT$.
	Put $H_0\coloneq DH_1$.
	Then $H_0$ acts trivially on $T/\ell^mT$, and the isomorphism $H_0/H_1\simeq D/(D\cap H_1)$ allows us to extend this composite to a continuous homomorphism $c_m:H_0\to T/\ell^mT$.

	By~\eqref{eq:cf-cohomology-lifts}, choose compatible classes $\alpha_n\in H^1(H_0,T/\ell^nT)$ for every $n\ge m$, starting with $\alpha_m=[c_m]$.
	Restriction to $D$ takes values in $H^1(D,T/\ell^nT)^{H_0}$.
	Since $D$ acts trivially on $T/\ell^nT$, we have $H^1(D,T/\ell^nT)=\Hom(A,T/\ell^nT)$.
	Thus these restrictions define compatible $H_0$-equivariant continuous homomorphisms $f_n:A\to T/\ell^nT$.
	Their inverse limit is a continuous homomorphism $f:A\to T$.
	Its reduction modulo $\ell^m$ is $f_m$, hence $f(z)\ne0$.
	For every $h\in H_0$ and $a\in A$, equivariance gives
	\begin{equation}
		\label{eq:cf-detecting-character}
		f(hah^{-1})=\theta(h)f(a).
	\end{equation}
	By~\eqref{eq:cf-detecting-character}, if $h\in H_0$ fixes $z$, then $(\theta(h)-1)f(z)=0$.
	Since $T$ is free of rank one and $f(z)\ne0$, it follows that $\theta(h)=1$.

	\noindent\ref{item:cf-integral-global} Assume that $z\in A^G$.
	The preceding argument gives $\theta|_{H_0}=1$.
	Thus $H^1(H_0,T/\ell^nT)=\Hom(H_0,T/\ell^nT)$, and the compatible classes $\alpha_n$ define a continuous homomorphism $c:H_0\to T$ whose restriction to $D$ induces $f$.
	Since $D\subseteq G^{[1]}$, the composite
	\begin{equation*}
		A\xrightarrow{0}(\Abelianization{G})_\ell
		\xrightarrow{\operatorname{Ver}_{G,H_0}}(\Abelianization{H_0})_\ell\to T
	\end{equation*}
	is zero, where the last map is induced by $c$.
	By the transfer formula \cite[Proposition~1.5.9]{MR2392026}, this composite sends $z$ to
	\begin{equation*}
		\sum_{g\in G/H_0}f(gzg^{-1})=[G:H_0]f(z).
	\end{equation*}
	This is nonzero because $T$ is torsion-free and $f(z)\ne0$, a contradiction.

	\noindent\ref{item:cf-integral-open} Assume that $\theta(G)$ is infinite and that $z\in A^H$ for an open subgroup $H$ of $G$.
	Every element of $H_0\cap H$ fixes $z$, hence $H_0\cap H\subseteq\ker\theta$.
	Thus $\ker\theta$ is open, contrary to the infinitude of $\theta(G)$.
\end{proof}

The remaining step is group-theoretic.
The following lemma locates the centralizers needed in the proof of Theorem~\ref{thm:cf-relative-criterion}.

\begin{lemma}
	\label{lem:cf-centralizer-criterion}
	Let $\Sigma$ be a set of primes, let $G$ be a $\Sigma$-ab-faithful profinite group, and let $D$ be a closed normal subgroup of $G$.
	Let $H$ be an open subgroup of $G$ that contains $D$.
	Put $R\coloneq R_\Sigma(D)$.
	Then
	\begin{equation*}
		\operatorname{C}_{G/R}(H/R)\subseteq D/R.
	\end{equation*}
\end{lemma}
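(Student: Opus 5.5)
The plan is to take an element $gR\in\operatorname{C}_{G/R}(H/R)$ with $g\notin D$ and reach a contradiction with $\Sigma$-ab-faithfulness, applied to a suitable pair of open subgroups. Since $g$ centralizes $H$ modulo $R$, and $R=R_\Sigma(D)$ is characteristic in $D$ while $D$ is normal in $G$, we get $[g,h]\in R$ for every $h\in H$. In particular, for $d\in D$ we have $gdg^{-1}\equiv d$ modulo $R$. By \eqref{eq:cf-sigma-primary}, this says that conjugation by $g$ acts trivially on $D/R\simeq\Abelianization{(D^\Sigma)}$. The difficulty is that $D$ is only closed, not open, so ab-faithfulness does not apply to $D$ directly. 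We therefore approximate $D$ by open subgroups.

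Since $g\notin D$ and $D$ is closed, choose an open normal subgroup $U$ of $G$ with $g\notin DU$ and $U\subseteq H$. Put $N\coloneq DU$, which is open and normal in $G$ and contained in $H$. Put $H'\coloneq\langle g\rangle N$, the closed subgroup generated by $g$ and $N$. It is open because it contains $N$, and $N$ is normal in $H'$. Then $gN$ is a nontrivial element of $H'/N$, so $\Sigma$-ab-faithfulness forces $g$ to act nontrivially on $\Abelianization{(N^\Sigma)}$. The aim is to contradict this by showing that $g$ acts trivially on $\Abelianization{(N^\Sigma)}$ once $U$ is chosen small enough.

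The key step, and the main obstacle, is to control this action on $N/R_\Sigma(N)$. The action on the image of $D$ is trivial by the first paragraph, since $D\to N\to N/R_\Sigma(N)$ factors through $D/R_\Sigma(D)$. The action on elements coming from $U\subseteq H$ is trivial only modulo $R\subseteq D$, which is too weak in general. A better route is to argue by limits. We have $D=\bigcap_U DU$, and I expect $\Abelianization{(D^\Sigma)}$ to be realized as the projective limit of the images of $D$ in $\Abelianization{((DU)^\Sigma)}$, although this is not true in general.

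So I would instead reorganize the argument so that ab-faithfulness is applied with $H'$ replaced by the open subgroup $\langle g\rangle U'$ and $N$ replaced by a subgroup $N'$ of $H$ on which $g$ acts trivially modulo commutators. The point is that $g$ commutes with all of $H$ modulo $R$, so $g$ acts trivially on the abelianization of any open subgroup $N'\subseteq H$, normalized by $g$, whose $\Sigma$-abelianization receives $R$ trivially. I would try to choose $N'$ so that $R\subseteq[N',N']R_\Sigma$-part; for instance $N'$ containing $D$, where $R=R_\Sigma(D)\subseteq R_\Sigma(N')$ follows by functoriality of $R_\Sigma$ under the inclusion $D\subseteq N'$. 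With this choice the action of $g$ on $N'/R_\Sigma(N')$ is trivial, because $[g,n]\in R\subseteq R_\Sigma(N')$. Taking $N'=N=DU$ as above, with $g\notin N$, contradicts $\Sigma$-ab-faithfulness for the pair $(\langle g\rangle N,N)$. Hence $g\in D$.
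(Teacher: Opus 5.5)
Your final argument is correct and is essentially the paper's proof. With $N=DU$ open normal in $G$, contained in $H$ and not containing $g$, the inclusion $R=R_\Sigma(D)\subseteq R_\Sigma(N)$ together with $[g,N]\subseteq R$ makes $g$ act trivially on $\Abelianization{(N^\Sigma)}$, which contradicts $\Sigma$-ab-faithfulness. The paper applies this to the pair $(G,N)$ rather than $(\langle g\rangle N,N)$, which changes nothing.

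You should delete the third paragraph's worry that triviality modulo $R$ is ``too weak'', along with the projective-limit detour. The same inclusion $R\subseteq R_\Sigma(N)$ already settles that point.
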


\begin{proof}
	Let $g\in G$ have image in $\operatorname{C}_{G/R}(H/R)$.
	Assume that $g\notin D$.
	Choose an open normal subgroup $H_1$ of $G$ such that $H_1\subseteq H$ and $g\notin DH_1$.
	For every profinite group $J$ that contains $D$ as a closed subgroup, the composite $D\to J\to\Abelianization{(J^\Sigma)}$ factors through $\Abelianization{(D^\Sigma)}$.
	Hence
	\begin{equation}
		\label{eq:cf-sigma-monotone}
		R\subseteq R_\Sigma(J).
	\end{equation}
	By~\eqref{eq:cf-sigma-monotone}, the natural map $DH_1\to\Abelianization{((DH_1)^\Sigma)}$ factors through $DH_1/R$.
	Since $DH_1\subseteq H$, the element $g$ acts trivially on $\Abelianization{((DH_1)^\Sigma)}$.
	The $\Sigma$-ab-faithfulness of $G$ gives $g\in DH_1$, a contradiction.
	Thus $g\in D$, proving the assertion.
\end{proof}

\begin{theorem}
	\label{thm:cf-relative-criterion}
	Let $\Sigma$ be a set of primes, let $G$ be a $\Sigma$-ab-faithful profinite group, and let $D$ be a closed normal subgroup of $G$ such that $D\subseteq G^{[1]}$.
	Put $R\coloneq R_\Sigma(D)$.
	Assume that, for every prime $\ell\in\Sigma$ with $(\Abelianization{D})_\ell\ne0$, there exists an $\ell$-orientation $\theta_\ell$ of $G$ satisfying formal Hilbert~90.
	Then $G/R$ is center-free.
	If each $\theta_\ell$ can be chosen with infinite image, then $G/R$ is slim.
\end{theorem}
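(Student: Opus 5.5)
The plan is to combine Lemma~\ref{lem:cf-centralizer-criterion} with Lemma~\ref{lem:cf-integral-invariants}. Let $\bar g\in G/R$ be an element of the center, or more generally of $\operatorname{C}_{G/R}(H'/R)$ for an open subgroup of $G/R$. Its preimage $H'$ is open in $G$, and in the center case $H'=G$. Put $H\coloneq H'D$, which is open and contains $D$. Then $\operatorname{C}_{G/R}(H'/R)\subseteq\operatorname{C}_{G/R}(H'/R)$ trivially, but we need to control centralizing all of $H/R$. In the slim case, first replace $H'$ by $H'\cap{}$ (something) so that $D\subseteq H'$. Since $D$ is closed but not open, this is not automatic. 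Instead I will argue in two steps.

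\emph{Step 1: locating $\bar g$ in $D/R$.} If $\bar g$ centralizes $H'/R$ for an open $H'$, then it centralizes $(H'\cap D')/R$ for every open normal $D'\supseteq D$. I want an open $H\supseteq D$ that $\bar g$ centralizes. The way around this is to use only the consequence needed in the proof of Lemma~\ref{lem:cf-centralizer-criterion}: that $g$ acts trivially on $\Abelianization{((DH_1)^\Sigma)}$ for small open normal $H_1$. Hence I will aim to reprove that lemma with $H$ replaced by $H'$. For the center case, $H'=G\supseteq D$, and Lemma~\ref{lem:cf-centralizer-criterion} applies directly to give $\bar g\in D/R$. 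For the slim case, I expect to first show $\bar g\in D/R$ by a variant of that argument, and to treat this as the main obstacle. The first thing I would try is to note that $\bar g$ centralizing $H'/R$ implies $g$ acts trivially on $(D\cap H')/R_\Sigma(D\cap H')$. Then I would run the argument of Lemma~\ref{lem:cf-centralizer-criterion} with $D$ replaced by the closed normal subgroup $D\cap H''$ of an open normal subgroup $H''\subseteq H'$ of $G$, together with ab-faithfulness applied to the open subgroup $H''$.

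\emph{Step 2: killing the element in $D/R$.} Once $\bar g\in D/R\simeq\prod_{\ell\in\Sigma}(\Abelianization D)_\ell$ by \eqref{eq:cf-sigma-primary}, write $\bar g=(z_\ell)_\ell$. Conjugation by $G$ on $D/R$ is compatible with this decomposition, so each component $z_\ell$ is fixed by $G$ in the center case, or by $H'$ in the slim case. Every nonzero $z_\ell$ lies in a nonzero $(\Abelianization D)_\ell$, so by hypothesis there is an orientation $\theta_\ell$ satisfying formal Hilbert~90.

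Lemma~\ref{lem:cf-integral-invariants}\ref{item:cf-integral-global} then gives $z_\ell=0$ in the center case. When $\theta_\ell$ has infinite image, Lemma~\ref{lem:cf-integral-invariants}\ref{item:cf-integral-open} gives $z_\ell=0$ for the open subgroup $H'$. Hence $\bar g=1$, which yields center-freeness, respectively slimness, of $G/R$.

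The hypothesis $D\subseteq G^{[1]}$ is used only to apply Lemma~\ref{lem:cf-integral-invariants}. In the slim case, I expect the main obstacle to be the bookkeeping in Step~1: ensuring that the centralized open subgroup can be taken to contain $D$, or adapting Lemma~\ref{lem:cf-centralizer-criterion} to open subgroups not containing $D$.
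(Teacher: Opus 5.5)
The center-freeness half of your proposal is correct and is the paper's argument: Lemma~\ref{lem:cf-centralizer-criterion} with $H=G$ places the center in $D/R$. Lemma~\ref{lem:cf-integral-invariants}\ref{item:cf-integral-global}, applied componentwise through \eqref{eq:cf-sigma-primary}, then gives $(D/R)^G=0$. Your Step~2 for slimness is also fine, \emph{provided} you already know that an element centralizing $H'/R$ lies in $D/R$.

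\textbf{The gap is Step~1 in the slim case.} You leave it open, and the concrete route you suggest does not work. Because $D/R$ is abelian pro-$\Sigma$ and $R\subseteq D\cap H'$, you have $R_\Sigma(D\cap H')\subseteq R$. So $(D\cap H')/R$ is a \emph{quotient} of $(D\cap H')/R_\Sigma(D\cap H')$, usually a proper one. Trivial action of $g$ on $(D\cap H')/R$ therefore does not imply trivial action on the larger group $(D\cap H')/R_\Sigma(D\cap H')$; the implication runs the wrong way. Replacing $D$ by $D\cap H''$ also changes the subgroup $R_\Sigma(\cdot)$, so the argument would no longer take place in $G/R$.

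\textbf{The missing idea is to reverse the order of your two steps.} First kill the intersection with $D/R$, and use that to enlarge the centralized subgroup so that it contains $D$.

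\begin{enumerate}[1.]
\item It suffices to take $H'$ open normal in $G$ with $R\subseteq H'$, since shrinking $H'$ only enlarges its centralizer. Put $C\coloneq\operatorname{C}_{G/R}(H'/R)$.
\item Both $C$ and $D/R$ are normal in $G/R$.
\item $C\cap(D/R)=(D/R)^{H'}=0$ by Lemma~\ref{lem:cf-integral-invariants}\ref{item:cf-integral-open}. This is where the infinite-image hypothesis enters.
\item Two normal subgroups with trivial intersection commute elementwise, since each commutator $cdc^{-1}d^{-1}$ lies in both. Hence $C$ also centralizes $D/R$, so $C=\operatorname{C}_{G/R}(DH'/R)$.
\item Now Lemma~\ref{lem:cf-centralizer-criterion} applies to the open subgroup $DH'$, which contains $D$. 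This gives $C\subseteq D/R$, hence $C=C\cap(D/R)=0$.
\end{enumerate}

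Without this commutator trick, or an equivalent device, your plan does not reach slimness.
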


\begin{proof}
	We use the $G$-equivariant isomorphism~\eqref{eq:cf-sigma-primary}:
	\begin{equation*}
		D/R\simeq\prod_{\ell\in\Sigma}(\Abelianization{D})_\ell.
	\end{equation*}
	Lemma~\ref{lem:cf-integral-invariants}\ref{item:cf-integral-global}, applied to each nonzero factor with its orientation $\theta_\ell$, gives the vanishing of the $G$-invariant subgroup of each factor.
	Taking invariant subgroups in~\eqref{eq:cf-sigma-primary} therefore gives $(D/R)^{G}=0$.
	Lemma~\ref{lem:cf-centralizer-criterion}, applied with $H=G$, shows that the center of $G/R$ is contained in $D/R$.
	The vanishing of $(D/R)^{G}$ therefore proves the first assertion.

	Next, we prove slimness.
	Assume that each $\theta_\ell$ has infinite image.
	Lemma~\ref{lem:cf-integral-invariants}\ref{item:cf-integral-open} gives
	\begin{equation*}
		(D/R)^H\simeq\prod_{\ell\in\Sigma}((\Abelianization{D})_\ell)^H=0
	\end{equation*}
	for every open subgroup $H$ of $G$.
	Let $H$ be an open normal subgroup of $G$ that contains $R$.
	Both $\operatorname{C}_{G/R}(H/R)$ and $D/R$ are normal in $G/R$.
	Let $c\in\operatorname{C}_{G/R}(H/R)$ and $d\in D/R$.
	By normality, $dc^{-1}d^{-1}$ belongs to the first subgroup and $cdc^{-1}$ belongs to the second.
	Thus $c(dc^{-1}d^{-1})=(cdc^{-1})d^{-1}$ belongs to both subgroups, and
	\begin{equation*}
		cdc^{-1}d^{-1}\in\operatorname{C}_{G/R}(H/R)\cap(D/R)=(D/R)^H=0.
	\end{equation*}
	Hence $c$ and $d$ commute.
	It follows that
	\begin{equation*}
		\operatorname{C}_{G/R}(H/R)=\operatorname{C}_{G/R}(DH/R)\subseteq D/R,
	\end{equation*}
	where the inclusion follows from Lemma~\ref{lem:cf-centralizer-criterion} applied to the open subgroup $DH$, which contains $D$.
	Thus $\operatorname{C}_{G/R}(H/R)=0$.
	Every open subgroup of $G/R$ contains an open normal subgroup, proving slimness.
\end{proof}

\begin{corollary}
	\label{cor:cf-group-criterion}
	Let $G$ be an ab-faithful profinite group such that, for every prime $\ell$, there exists an $\ell$-orientation of $G$ satisfying formal Hilbert~90.
	Then $G^m$ is center-free for every $m\ge2$.
	If, for every prime $\ell$, such an orientation can be chosen with infinite image, then $G^m$ is slim for every $m\ge2$.
\end{corollary}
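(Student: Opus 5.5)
We will deduce the corollary from Theorem~\ref{thm:cf-relative-criterion}, taking $\Sigma$ to be the set of all primes and $D\coloneq G^{[m-1]}$. Since $m\ge2$, we have $D\subseteq G^{[1]}$, and $D$ is a closed characteristic (hence normal) subgroup of $G$. The group $G$ is ab-faithful, which is $\Sigma$-ab-faithfulness for $\Sigma$ the set of all primes. The hypothesis provides, for every prime $\ell$, an $\ell$-orientation of $G$ satisfying formal Hilbert~90. In particular, such an orientation exists for every $\ell$ with $(\Abelianization{D})_\ell\ne0$.

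The key identification is $R_\Sigma(D)=G^{[m]}$ when $\Sigma$ is the set of all primes. Here $D^\Sigma=D$, so
\begin{equation*}
	R_\Sigma(D)=\ker\bigl(D\to\Abelianization{D}\bigr)=[D,D]=[G^{[m-1]},G^{[m-1]}]=G^{[m]},
\end{equation*}
with closed commutator subgroups throughout. Hence $G/R_\Sigma(D)=G/G^{[m]}=G^m$.

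Theorem~\ref{thm:cf-relative-criterion} then gives that $G^m$ is center-free. If each orientation can be chosen with infinite image, the second part of that theorem gives that $G^m$ is slim.

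There is no real obstacle. The only points to check are that $D\subseteq G^{[1]}$, which needs $m-1\ge1$ and is the reason for the assumption $m\ge2$, and that the kernel of abelianization agrees with the closed commutator subgroup. The latter holds because the paper's conventions take closures of all commutator subgroups.
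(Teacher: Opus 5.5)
Your proposal is correct and matches the paper's proof, which applies Theorem~\ref{thm:cf-relative-criterion} with $\Sigma$ the set of all primes and $D=G^{[m-1]}$. You additionally spell out the checks that $D\subseteq G^{[1]}$ and $R_\Sigma(D)=G^{[m]}$, which the paper leaves implicit.
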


\begin{proof}
	Apply Theorem~\ref{thm:cf-relative-criterion} with $\Sigma$ equal to the set of all primes and $D\coloneq G^{[m-1]}$.
\end{proof}

If $G$ is ab-torsion-free, then every trivial orientation satisfies formal Hilbert~90.
Thus Corollary~\ref{cor:cf-group-criterion} generalizes the center-freeness assertion in \cite[Proposition~2.5]{yamaguchi2026centerfreenessfinitestepsolvablegroups}.

\begin{corollary}
	\label{thm:cf-absolute-center}
	Let $K$ be a field whose absolute Galois group $\AbsGalGrp{K}$ is ab-faithful.
	Then $\AbsGalGrp{K}^m$ is center-free for every $m\geq2$.
	Assume moreover that the following two conditions hold:
	\begin{enumerate}[(i)]
		\item\label{item:cf-roots-of-unity} For every prime $\ell\ne\operatorname{char}K$, the extension $K(\mu_{\ell^\infty})/K$ is infinite.
		\item\label{item:cf-artin-schreier} If $\operatorname{char}K=p>0$, then $K$ admits a cyclic Galois extension of degree $p$.
	\end{enumerate}
	Then $\AbsGalGrp{K}^m$ is slim for every $m\ge2$.
\end{corollary}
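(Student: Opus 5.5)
The plan is to deduce both assertions from Corollary~\ref{cor:cf-group-criterion} with $G=\AbsGalGrp{K}$. Everything then reduces to producing, for each prime $\ell$, an $\ell$-orientation satisfying formal Hilbert~90. For slimness, this orientation must also have infinite image under hypotheses (i) and (ii). The choice of orientation depends on whether $\ell$ equals the characteristic.

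\emph{Case $\ell\ne\operatorname{char}K$.} Take $\theta$ to be the $\ell$-adic cyclotomic character, so that $T/\ell^nT\simeq\mu_{\ell^n}$ as Galois modules. For an open subgroup $H=\AbsGalGrp{L}$, Kummer theory identifies $H^1(H,\mu_{\ell^n})$ with $L^\times/L^{\times\ell^n}$, compatibly with the natural maps. The reduction $H^1(H,T/\ell^nT)\to H^1(H,T/\ell T)$ is then the projection
\begin{equation*}
L^\times/L^{\times\ell^n}\to L^\times/L^{\times\ell},
\end{equation*}
which is surjective; this is the classical instance of formal Hilbert~90. The main point to check is that the map on $H^1$ induced by $\mu_{\ell^n}\to\mu_\ell$, $\zeta\mapsto\zeta^{\ell^{n-1}}$, really corresponds to the obvious projection on the Kummer side. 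This follows from the morphism of Kummer sequences given by raising to the $\ell^{n-1}$-th power and the identity map on $K^{\mathrm{sep},\times}$. Finally, the image of $\theta$ is isomorphic to $\Gal(K(\mu_{\ell^\infty})/K)$, so it is infinite exactly under hypothesis (i).

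\emph{Case $\ell=p=\operatorname{char}K>0$.} Take the trivial orientation. By Artin--Schreier--Witt theory, $H^1(\AbsGalGrp{L},\mathbb{Z}/p^n)\simeq W_n(L)/(F-1)W_n(L)$, and reduction corresponds to the restriction $W_n\to W_1$. This map is surjective on Witt vectors, so formal Hilbert~90 holds. Alternatively, one can use that $\AbsGalGrp{L}(p)$ is free pro-$p$, so that $H^2(\AbsGalGrp{L},\mathbb{Z}/p)=0$ and the connecting map in the long exact sequence of $0\to\mathbb{Z}/p^{n-1}\to\mathbb{Z}/p^n\to\mathbb{Z}/p\to0$ vanishes. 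However, the trivial orientation has finite image, so it cannot be used directly for slimness. Instead, I would compose a surjection $\AbsGalGrp{K}\to\mathbb{Z}_p$ (given by the $\mathbb{Z}_p$-extension, which exists by Witt vectors once a $\mathbb{Z}/p$-extension exists by (ii)) with an embedding $\mathbb{Z}_p\hookrightarrow 1+p\mathbb{Z}_p\subseteq\mathbb{Z}_p^\times$ (for $p=2$, use $1+4\mathbb{Z}_2$). This gives $\theta$ with infinite image.

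The main obstacle is verifying formal Hilbert~90 for this twisted $\theta$ in characteristic $p$. I would use that $H^2(H,M)=0$ for every $p$-torsion module $M$ when $\operatorname{cd}_pH\le1$, which holds in characteristic $p$. Then, from the exact sequence
\begin{equation*}
0\to T/\ell^{n-1}T\to T/\ell^nT\to T/\ell T\to0,
\end{equation*}
the reduction $H^1(H,T/\ell^nT)\to H^1(H,T/\ell T)$ is surjective, because its cokernel injects into $H^2(H,T/\ell^{n-1}T)=0$. This argument covers the trivial orientation as well. With the orientations in hand, Corollary~\ref{cor:cf-group-criterion} yields center-freeness from the first choices for all fields, and slimness from the infinite-image choices under (i) and (ii).
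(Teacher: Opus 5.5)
Your proposal is correct and follows the paper's proof essentially step for step. You reduce to Corollary~\ref{cor:cf-group-criterion}, use the cyclotomic character with Kummer theory for $\ell\ne\operatorname{char}K$, and for $\ell=p=\operatorname{char}K$ use $\operatorname{cd}_p\le1$ to get formal Hilbert~90 for every $p$-orientation, twisting by a surjection $\AbsGalGrp{K}\twoheadrightarrow\mathbb{Z}_p$ to obtain infinite image under (ii). The differences are only cosmetic: you build the $\mathbb{Z}_p$-extension via Witt vectors rather than compatible lifts, and you embed $\mathbb{Z}_p$ via $1+p$ (or $1+4$ when $p=2$) instead of the paper's $1+p^2$.
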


\begin{proof}
	By Corollary~\ref{cor:cf-group-criterion}, it suffices to construct, for every prime $\ell$, an $\ell$-orientation of $\AbsGalGrp{K}$ satisfying formal Hilbert~90, with infinite image under conditions~\ref{item:cf-roots-of-unity} and~\ref{item:cf-artin-schreier}.
	Fix a prime $\ell$.
	Assume first that $\ell\ne\operatorname{char}K$.
	Take $T=\mathbb{Z}_\ell(1)\coloneq\varprojlim_n\mu_{\ell^n}$, viewed as an additive $\mathbb{Z}_\ell$-module, with the cyclotomic action $\theta=\chi_\ell$.
	For every finite separable extension $L/K$ and every $n\ge m\ge1$, Kummer theory identifies the reduction map
	\begin{equation*}
		H^1(\AbsGalGrp{L},\mu_{\ell^n})\to H^1(\AbsGalGrp{L},\mu_{\ell^m})
	\end{equation*}
	with the natural surjection $L^\times/(L^\times)^{\ell^n}\to L^\times/(L^\times)^{\ell^m}$.
	Thus $(\AbsGalGrp{K},\chi_\ell)$ satisfies formal Hilbert~90.
	Moreover, $\chi_\ell(\AbsGalGrp{K})$ is infinite under condition~\ref{item:cf-roots-of-unity}.

	Assume next that $\ell=\operatorname{char}K=p>0$.
	We have $\operatorname{cd}_p(H)\le1$ for every open subgroup $H$ of $\AbsGalGrp{K}$ by \cite[Corollary~6.1.3]{MR2392026}.
	For any continuous character $\theta:\AbsGalGrp{K}\to\mathbb{Z}_p^\times$, give $T=\mathbb{Z}_p$ the action $g\cdot t=\theta(g)t$.
	For $n>m\ge1$, the exact sequence
	\begin{equation*}
		0\to T/p^{n-m}T\xrightarrow{p^m}T/p^nT\to T/p^mT\to0
	\end{equation*}
	gives~\eqref{eq:cf-cohomology-lifts}, since $H^2(H,T/p^{n-m}T)=0$.
	Hence every $p$-orientation satisfies formal Hilbert~90; taking $\theta=1$ gives the orientation required for center-freeness.

	Under condition~\ref{item:cf-artin-schreier}, a cyclic extension of degree $p$ gives a continuous surjection $\AbsGalGrp{K}\to\mathbb{Z}/p\mathbb{Z}$.
	With trivial actions on the coefficients, the cohomology groups in~\eqref{eq:cf-cohomology-lifts} are groups of continuous homomorphisms.
	Taking compatible lifts gives a continuous homomorphism $\rho:\AbsGalGrp{K}\to\mathbb{Z}_p$ whose reduction modulo $p$ is surjective.
	Its image is closed and is not contained in $p\mathbb{Z}_p$, hence $\rho$ is surjective.
	The continuous character
	\begin{equation*}
		\theta(g)\coloneq(1+p^2)^{\rho(g)}
	\end{equation*}
	has infinite image, since $a\mapsto(1+p^2)^a$ is injective on $\mathbb{Z}_p$ by the $p$-adic logarithm.
	As shown above, this orientation satisfies formal Hilbert~90, proving the assertion.
\end{proof}

\begin{example}
	\label{ex:cf-field-slim}
	We give applications of Corollary~\ref{thm:cf-absolute-center}.
	Condition~\ref{item:cf-roots-of-unity} in Corollary~\ref{thm:cf-absolute-center} is equivalent to the finiteness of $\mu_{\ell^\infty}(L)$ for every finite extension $L/K$ and every prime $\ell\ne\operatorname{char}K$.
	\begin{enumerate}[(1)]
		\item\label{item:cf-toral-kummer-faithful} A field $K$ of characteristic zero is \emph{torally Kummer-faithful} if, for every finite extension $L/K$, we have
		      \begin{equation*}
			      \bigcap_{n\geq1}(L^\times)^n=\{1\}\quad\text{(see~\cite[Definition~1.5]{MR3445958}).}
		      \end{equation*}
		      If $K$ is torally Kummer-faithful, then the image of $\chi_\ell$ is open for every prime $\ell$ by \cite[Remark~1.5.1]{MR3445958}.
		      Hence, if $K$ is torally Kummer-faithful and $\AbsGalGrp{K}$ is ab-faithful, Corollary~\ref{thm:cf-absolute-center} shows that $\AbsGalGrp{K}^m$ is slim for every $m\ge2$.

		\item A field $K$ is \emph{sub-$p$-adic} if it embeds into a finitely generated extension of $\mathbb{Q}_p$.
		      Every sub-$p$-adic field is torally Kummer-faithful by \cite[Remark~1.5.4(i)]{MR3445958}.
		      It also admits a discrete valuation of mixed characteristic $(0,p)$.
		      Hence $\AbsGalGrp{K}$ is ab-faithful by Theorem~\ref{thm:mixed-characteristic}, and $\AbsGalGrp{K}^m$ is slim for every $m\ge2$ by~\ref{item:cf-toral-kummer-faithful}.

		\item\label{item:cf-hlf-slim} Let $K$ be a higher local field of positive dimension.
		      Then $\AbsGalGrp{K}^m$ is slim for every $m\ge2$.
		      Indeed, Theorem~\ref{thm:main-hlf} and its proof give ab-faithfulness of $\AbsGalGrp{K}$ and finiteness of $\mu_{\ell^\infty}(L)$ for every finite extension $L/K$ and every prime $\ell$.
		      If $\operatorname{char}K=p>0$, the composite of the residue Galois quotients to the finite last residue field gives a surjection $\AbsGalGrp{K}\twoheadrightarrow\widehat{\mathbb{Z}}$.
		      Its quotient of order $p$ gives a cyclic Galois extension of $K$ of degree $p$.
		      Thus Corollary~\ref{thm:cf-absolute-center} applies.
	\end{enumerate}
\end{example}

The next lemma shows that ab-faithfulness in Corollary~\ref{thm:cf-absolute-center} cannot be replaced by slimness.

\begin{lemma}
	\label{ex:slim-perfect-kernel}
	There exists a pseudo-algebraically closed (PAC) field $K$ of characteristic zero such that $\AbsGalGrp{K}$ is slim and $\AbsGalGrp{K}^m\simeq\widehat{\mathbb{Z}}$ for every $m\ge1$.
\end{lemma}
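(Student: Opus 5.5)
We need a PAC field $K$ of characteristic zero with $\AbsGalGrp{K}$ slim and $\AbsGalGrp{K}^m\simeq\widehat{\mathbb{Z}}$ for all $m\ge1$. The plan is to first choose a suitable projective profinite group $P$ and then realize it as an absolute Galois group of a PAC field.

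The realization step is standard. Every projective profinite group is isomorphic to $\AbsGalGrp{K}$ for some PAC field $K$, and the characteristic can be prescribed as zero (Lubotzky--van den Dries; see \cite[Theorem~23.1.2 / Corollary~23.1.2]{MR4647281}). So it suffices to find a projective $P$ such that $P$ is slim, $\Abelianization{P}\simeq\widehat{\mathbb{Z}}$, and the kernel $P^{[1]}$ is perfect. Perfectness gives $P^{[m]}=P^{[1]}$ for all $m\ge1$, hence $P^m=P/P^{[1]}\simeq\widehat{\mathbb{Z}}$.

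To build $P$, I will start from a free profinite group $\widehat F_\omega$ of countably infinite rank, which is projective. Put $F\coloneq\widehat F_\omega$ and let $N$ be the perfect kernel of $F$, that is, the intersection of its derived series (the maximal closed perfect subgroup, obtained by transfinite iteration of closed commutator subgroups). Then $F/N$ is prosolvable. Choose a closed normal subgroup $N\subseteq M$ with $F/M\simeq\widehat{\mathbb{Z}}$; this is possible because $F$ surjects onto $\widehat{\mathbb{Z}}$ and that quotient is prosolvable. Now set
\begin{equation*}
	P\coloneq N\rtimes\widehat{\mathbb{Z}},
\end{equation*}
realized inside $F$ as a closed subgroup $\langle N,x\rangle$, where $x\in F$ generates a procyclic subgroup mapping isomorphically onto $F/M$. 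Closed subgroups of projective groups are projective, so $P$ is projective. Since $N$ is perfect and $P/N\simeq\widehat{\mathbb{Z}}$ is abelian, we get $P^{[1]}=N$, which is perfect, as required.

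The main obstacle is proving that $P$ is slim. One needs that the centralizer of every open subgroup of $P$ is trivial. Open subgroups of $P$ meet $N$ in open subgroups of $N$, and $N$ is a closed normal subgroup of infinite index in a free profinite group. I would first try to prove $N$ itself is slim, and that centralizers of open subgroups of $N$ inside $F$ are trivial. The tool is that closed subgroups of free profinite groups are projective, so nontrivial centralizing elements would generate, together with an open subgroup of $N$, projective groups with nontrivial center. Projective groups have abelian closed subgroups that are procyclic, and a nontrivial normal procyclic subgroup in a large normal subgroup such as $N$ is ruled out. Concretely, $N$ is nonabelian and contains free subgroups, and one can invoke the theorem that a nontrivial closed normal subgroup of infinite index in $\widehat F_\omega$ has trivial centralizer \cite[Ch.~25]{MR4647281}. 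Given the centralizer statement for the ambient $F$, any element of $P$ centralizing an open subgroup $U$ of $P$ centralizes $U\cap N$, which is nontrivial and of infinite index, and so is trivial.

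If this route proves delicate, the fallback is to replace $F$ by the free product of enough finite simple nonabelian groups and $\widehat{\mathbb{Z}}$, arranged so that it is projective. Ab-faithfulness fails in either case, as it must, since $P^m$ is abelian.
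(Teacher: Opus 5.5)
Your route differs from the paper's. The paper takes $\widetilde{A}_5\amalg\widehat{\mathbb{Z}}$, where $\widetilde{A}_5\to A_5$ is the universal Frattini cover (projective and perfect), and gets slimness from Herfort--Ribes: $\widehat{\mathbb{Z}}\cap g\widehat{\mathbb{Z}}g^{-1}=1$ for $g\notin\widehat{\mathbb{Z}}$. You take $P=N\overline{\langle x\rangle}$ inside $F=\widehat F_\omega$, with $N$ the perfect kernel.

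Your construction is fine. $N\neq1$, because by compactness every term of the derived series of $F$ still maps onto $A_5$. $P$ is projective as a closed subgroup of $F$, and $P^{[1]}=N$ is perfect, so $P^m\simeq\widehat{\mathbb{Z}}$.

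The gap is in slimness, which is where all the difficulty lies.
\begin{enumerate}[(i)]
	\item You apply a statement about closed subgroups \emph{normal in $F$} to $U\cap N$, which is only open in $N$. You must first shrink to $W\cap N$ with $W$ open normal in $F$. This subgroup is nontrivial because $N$ is infinite, $F$ being torsion-free.
	\item More seriously, the centralizer statement is cited only vaguely, and your heuristics cannot prove it. Being projective, nonabelian and containing free subgroups does not prevent a nontrivial center. For example, $\Phi\times\mathbb{Z}_2$, with $\Phi$ free pro-$3$ of rank $2$, is projective (all its Sylow subgroups are free pro-$p$) and has center $\mathbb{Z}_2$. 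So the argument must use normality in the free group.
\end{enumerate}

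Here is a short way to close (ii). Let $A\neq1$ be a closed normal subgroup of $F$, and suppose $1\neq a\in A$ and $1\neq c\in\operatorname{C}_F(A)$.
\begin{enumerate}[(1)]
	\item Choose an open normal $K\subseteq F$ with $a,c\notin K$. Put $Q\coloneq F/K$, $\bar a\coloneq aK$, $\bar c\coloneq cK$.
	\item Pick a basis element $y\in K$; all but finitely many basis elements of $\widehat F_\omega$ lie in $K$.
	\item Send $y\mapsto([1],1)$ and every other basis element $z$ to $(0,zK)$. This defines a continuous homomorphism $\phi\colon F\to(\mathbb{Z}/3\mathbb{Z})[Q]\rtimes Q$ lifting $F\to Q$.
	\item Then $\phi(aya^{-1}y^{-1})=([\bar a]-[1],1)$.
	\item Since $c$ commutes with $aya^{-1}y^{-1}\in A$, we get $[\bar c\bar a]-[\bar c]=[\bar a]-[1]$ in $(\mathbb{Z}/3\mathbb{Z})[Q]$.
	\item Compare the unique coefficients equal to $1$ on each side. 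This gives $\bar c\bar a=\bar a$, so $c\in K$, a contradiction.
\end{enumerate}
Hence every nontrivial closed normal subgroup of $F$ has trivial centralizer, and $\operatorname{C}_P(U)\subseteq\operatorname{C}_F(W\cap N)=1$.

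With this lemma and the shrinking step, your proof is a legitimate alternative. It trades the Frattini cover and Herfort--Ribes for an elementary fact about normal subgroups of free profinite groups, while the paper's example is more explicit and its slimness argument is immediate. Your fallback, however, cannot work. A free product with nontrivial finite factors has torsion, so it is never projective. That is exactly why the paper uses the torsion-free projective perfect group $\widetilde{A}_5$ rather than $A_5$ itself.
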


\begin{proof}
	We first construct a projective profinite group with the required properties.
	Let $P\to A_5$ be the universal Frattini cover.
	The group $P$ is projective by \cite[Proposition~25.7.1]{MR4647281}.
	Since $A_5$ is perfect, $[P,P]$ maps onto $A_5$.
	A Frattini cover has no proper closed subgroup that maps onto its target, hence $[P,P]=P$.
	Put $G\coloneq P\amalg\widehat{\mathbb{Z}}$, where $\amalg$ denotes the free profinite product.
	Both factors are projective, since $\widehat{\mathbb{Z}}$ is free profinite of rank one.
	A finite embedding problem for $G$ has a weak solution: lift the homomorphisms on the two factors by projectivity and combine them by the universal property of the free profinite product.
	Thus $G$ is projective, and \cite[Corollary~26.1.2]{MR4647281}, with base field $\mathbb{Q}$, realizes $G$ as the absolute Galois group of a PAC field $K$ of characteristic zero.

	Next, we compute the finite-step solvable quotients of $G$.
	Let $\pi:G\twoheadrightarrow\widehat{\mathbb{Z}}$ be the retraction that kills $P$, and put $D\coloneq\ker\pi$.
	The group $D$ is the closed normal subgroup of $G$ generated by $P$.
	Each conjugate of $P$ is perfect, hence is contained in $[D,D]$.
	Therefore $[D,D]=D$.
	Since $G/D$ is abelian, we have $[G,G]\subseteq D=[D,D]\subseteq[G,G]$.
	It follows that $G^{[m]}=D$ and $G^m\simeq\widehat{\mathbb{Z}}$ for every $m\ge1$.

	Finally, we prove that $G$ is slim.
	Let $g\in G$ lie outside $\widehat{\mathbb{Z}}$.
	The theorem of Herfort--Ribes on intersections of conjugate free factors \cite[Theorem~9.1.12]{MR2599132} gives
	\begin{equation*}
		\widehat{\mathbb{Z}}\cap g\widehat{\mathbb{Z}}g^{-1}=1.
	\end{equation*}
	Thus $\operatorname{C}_G(z)=\widehat{\mathbb{Z}}$ for every nontrivial $z\in\widehat{\mathbb{Z}}$: an element outside this factor cannot centralize $z$, whereas the factor itself is abelian.
	Fix $1\ne a\in P$, which lies outside $\widehat{\mathbb{Z}}$, and a topological generator $\tau$ of $\widehat{\mathbb{Z}}$.
	For an open subgroup $H$ of $G$, both $H\cap\widehat{\mathbb{Z}}$ and $a^{-1}Ha\cap\widehat{\mathbb{Z}}$ are open in $\widehat{\mathbb{Z}}$.
	Hence there exists $n\ge1$ with $\tau^n,a\tau^na^{-1}\in H$.
	Both elements are nontrivial, and their centralizers give
	\begin{equation*}
		\operatorname{C}_G(H)\subseteq\widehat{\mathbb{Z}}\cap a\widehat{\mathbb{Z}}a^{-1}=1.
	\end{equation*}
	This proves slimness.
\end{proof}

\section*{Acknowledgements}
The author would like to thank Takahiro~Murotani and Akio~Tamagawa for their valuable comments.
This work was supported by the Japan Society for the Promotion of Science (JSPS) KAKENHI Grant Number 23KJ0881.

\printbibliography

@misc{yamaguchi2026centerfreenessfinitestepsolvablegroups,
      title={Center-freeness of finite-step solvable groups arising from anabelian geometry},
      author={Naganori Yamaguchi},
      year={2026},
      eprint={2601.07112},
      archivePrefix={arXiv},
      primaryClass={math.GR},
      url={https://arxiv.org/abs/2601.07112},
}

@book {MR2392026,
    AUTHOR = {Neukirch, J\"urgen and Schmidt, Alexander and Wingberg, Kay},
     TITLE = {Cohomology of number fields},
    SERIES = {Grundlehren der mathematischen Wissenschaften [Fundamental
              Principles of Mathematical Sciences]},
    VOLUME = {323},
   EDITION = {Second},
 PUBLISHER = {Springer-Verlag, Berlin},
      YEAR = {2008},
     PAGES = {xvi+825},
      ISBN = {978-3-540-37888-4},
   MRCLASS = {11R34 (11-02 11G45 11R23 11S20 11S25 11S31 12G05)},
  MRNUMBER = {2392026},
       DOI = {10.1007/978-3-540-37889-1},
       URL = {https://doi.org/10.1007/978-3-540-37889-1},
}

@article {MR4880790,
    AUTHOR = {Merkurjev, Alexander and Scavia, Federico},
     TITLE = {On the {M}assey vanishing conjecture and formal {H}ilbert 90},
   JOURNAL = {Proc. Lond. Math. Soc. (3)},
  FJOURNAL = {Proceedings of the London Mathematical Society. Third Series},
    VOLUME = {130},
      YEAR = {2025},
    NUMBER = {3},
     PAGES = {Paper No. e70036, 32},
      ISSN = {0024-6115,1460-244X},
   MRCLASS = {12G05 (20E18 20J06 55S30)},
  MRNUMBER = {4880790},
MRREVIEWER = {Zinovy\ Reichstein},
       DOI = {10.1112/plms.70036},
       URL = {https://doi.org/10.1112/plms.70036},
}

@article {MR121364,
    AUTHOR = {Ikeda, Masatoshi},
     TITLE = {Zur {E}xistenz eigentlicher galoisscher {K}\"orper beim
              {E}inbettungsproblem f\"ur galoissche {A}lgebren},
   JOURNAL = {Abh. Math. Sem. Univ. Hamburg},
  FJOURNAL = {Abhandlungen aus dem Mathematischen Seminar der Universit\"at
              Hamburg},
    VOLUME = {24},
      YEAR = {1960},
     PAGES = {126--131},
      ISSN = {0025-5858,1865-8784},
   MRCLASS = {12.00},
  MRNUMBER = {121364},
MRREVIEWER = {O.\ F. G. Schilling},
       DOI = {10.1007/BF02942025},
       URL = {https://doi.org/10.1007/BF02942025},
}

@article {MR5005799,
    AUTHOR = {Hyeon, Seung-Hyeon},
     TITLE = {The {$m$}-step solvable anabelian geometry of
              mixed-characteristic local fields},
   JOURNAL = {J. Lond. Math. Soc. (2)},
  FJOURNAL = {Journal of the London Mathematical Society. Second Series},
    VOLUME = {112},
      YEAR = {2025},
    NUMBER = {6},
     PAGES = {Paper No. e70402},
      ISSN = {0024-6107,1469-7750},
   MRCLASS = {11S20 (11F80 11S15 11S31)},
  MRNUMBER = {5005799},
       DOI = {10.1112/jlms.70402},
       URL = {https://doi.org/10.1112/jlms.70402},
}

@book {MR4647281,
    AUTHOR = {Fried, Michael David and Jarden, Moshe},
     TITLE = {Field arithmetic},
    SERIES = {Ergebnisse der Mathematik und ihrer Grenzgebiete. 3. Folge. A
              Series of Modern Surveys in Mathematics [Results in
              Mathematics and Related Areas. 3rd Series. A Series of Modern
              Surveys in Mathematics]},
    VOLUME = {11},
   EDITION = {Fourth},
      NOTE = {Revised by Moshe Jarden},
 PUBLISHER = {Springer, Cham},
      YEAR = {2023},
     PAGES = {xxxi+827},
      ISBN = {978-3-031-28019-1},
   MRCLASS = {12E30 (03B25 03C10 03C20 03C60 11Rxx)},
  MRNUMBER = {4647281},
       DOI = {10.1007/978-3-031-28020-7},
       URL = {https://doi.org/10.1007/978-3-031-28020-7},
}

@book {MR2599132,
    AUTHOR = {Ribes, Luis and Zalesskii, Pavel},
     TITLE = {Profinite groups},
    SERIES = {Ergebnisse der Mathematik und ihrer Grenzgebiete. 3. Folge. A
              Series of Modern Surveys in Mathematics [Results in
              Mathematics and Related Areas. 3rd Series. A Series of Modern
              Surveys in Mathematics]},
    VOLUME = {40},
   EDITION = {Second},
 PUBLISHER = {Springer-Verlag, Berlin},
      YEAR = {2010},
     PAGES = {xvi+464},
      ISBN = {978-3-642-01641-7},
   MRCLASS = {20E18},
  MRNUMBER = {2599132},
       DOI = {10.1007/978-3-642-01642-4},
       URL = {https://doi.org/10.1007/978-3-642-01642-4},
}

@book {MR1697859,
    AUTHOR = {Neukirch, J\"urgen},
     TITLE = {Algebraic number theory},
    SERIES = {Grundlehren der mathematischen Wissenschaften [Fundamental
              Principles of Mathematical Sciences]},
    VOLUME = {322},
      NOTE = {Translated from the 1992 German original and with a note by
              Norbert Schappacher,
              With a foreword by G. Harder},
 PUBLISHER = {Springer-Verlag, Berlin},
      YEAR = {1999},
     PAGES = {xviii+571},
      ISBN = {3-540-65399-6},
   MRCLASS = {11Rxx (11-02 11S15 11S31 14C40)},
  MRNUMBER = {1697859},
MRREVIEWER = {Cornelius\ Greither},
       DOI = {10.1007/978-3-662-03983-0},
       URL = {https://doi.org/10.1007/978-3-662-03983-0},
}

@unpublished{Mochizuki2009CommentsStronglyTorsionFree,
  author  = {Mochizuki, Shinichi},
  title   = {Comments on Strongly Torsion-Free Groups},
  note    = {Unpublished note, February 2009},
  year    = {2009},
  month   = feb,
  url     = {https://www.kurims.kyoto-u.ac.jp/~motizuki/The%20Algebraic%20and%20Anabelian%20Geometry%20of%20Configuration%20Spaces%20%28comments%20on%20strongly%20torsion-free%20groups%29.pdf},
  urldate = {2025-12-30},
}

@article {MR2793956,
    AUTHOR = {Bary-Soroker, Lior and Duy T\^an, Nguy\^e\~n},
     TITLE = {On {$p$}-embedding problems in characteristic {$p$}},
   JOURNAL = {J. Pure Appl. Algebra},
  FJOURNAL = {Journal of Pure and Applied Algebra},
    VOLUME = {215},
      YEAR = {2011},
    NUMBER = {10},
     PAGES = {2533--2537},
      ISSN = {0022-4049,1873-1376},
   MRCLASS = {12F12 (12E25)},
  MRNUMBER = {2793956},
MRREVIEWER = {Montserrat\ Vela},
       DOI = {10.1016/j.jpaa.2011.02.012},
       URL = {https://doi.org/10.1016/j.jpaa.2011.02.012},
}

@article {MR4434497,
    AUTHOR = {Minamide, Arata and Tsujimura, Shota},
     TITLE = {Anabelian group-theoretic properties of the absolute {G}alois
              groups of discrete valuation fields},
   JOURNAL = {J. Number Theory},
  FJOURNAL = {Journal of Number Theory},
    VOLUME = {239},
      YEAR = {2022},
     PAGES = {298--334},
      ISSN = {0022-314X,1096-1658},
   MRCLASS = {12F10 (12J25 14H30)},
  MRNUMBER = {4434497},
MRREVIEWER = {Martin\ Epkenhans},
       DOI = {10.1016/j.jnt.2021.12.006},
       URL = {https://doi.org/10.1016/j.jnt.2021.12.006},
}

@article {MR2677893,
    AUTHOR = {Bary-Soroker, Lior and Haran, Dan and Harbater, David},
     TITLE = {Permanence criteria for semi-free profinite groups},
   JOURNAL = {Math. Ann.},
  FJOURNAL = {Mathematische Annalen},
    VOLUME = {348},
      YEAR = {2010},
    NUMBER = {3},
     PAGES = {539--563},
      ISSN = {0025-5831,1432-1807},
   MRCLASS = {20E18 (12E30 12F10 14H30)},
  MRNUMBER = {2677893},
MRREVIEWER = {Hiroaki\ Nakamura},
       DOI = {10.1007/s00208-010-0484-8},
       URL = {https://doi.org/10.1007/s00208-010-0484-8},
}

@article {MR3445958,
    AUTHOR = {Mochizuki, Shinichi},
     TITLE = {Topics in absolute anabelian geometry {III}: global
              reconstruction algorithms},
   JOURNAL = {J. Math. Sci. Univ. Tokyo},
  FJOURNAL = {The University of Tokyo. Journal of Mathematical Sciences},
    VOLUME = {22},
      YEAR = {2015},
    NUMBER = {4},
     PAGES = {939--1156},
      ISSN = {1340-5705},
   MRCLASS = {14G32 (14H30)},
  MRNUMBER = {3445958},
MRREVIEWER = {Ariyan\ Javanpeykar},
}

@incollection {MR2018557,
    AUTHOR = {Lorenz, Falko and Roquette, Peter},
     TITLE = {The theorem of {G}runwald-{W}ang in the setting of valuation
              theory},
 BOOKTITLE = {Valuation theory and its applications, {V}ol. {II}
              ({S}askatoon, {SK}, 1999)},
    SERIES = {Fields Inst. Commun.},
    VOLUME = {33},
     PAGES = {175--212},
 PUBLISHER = {Amer. Math. Soc., Providence, RI},
      YEAR = {2003},
      ISBN = {0-8218-3206-9},
   MRCLASS = {12J20 (11R37)},
  MRNUMBER = {2018557},
MRREVIEWER = {Franz-Viktor\ Kuhlmann},
       DOI = {10.1090/fic/033/12},
       URL = {https://doi.org/10.1090/fic/033/12},
}

\end{document}